\documentclass[12pt,reqno]{amsart}
\usepackage[letterpaper,total={6.5in,9in},top=1in, left=1in, right=1in, bottom=1in]{geometry}
\usepackage{graphicx} 

\usepackage[utf8]{inputenc}
\usepackage{color}
\usepackage[headings]{fullpage}
\usepackage{enumerate}
\usepackage{amsmath, amsthm} 
\usepackage{amsfonts}
\usepackage{amssymb}
\usepackage{mathrsfs}
\usepackage{fullpage} 
\usepackage{xcolor}
\usepackage{bbold}
\usepackage{tikz}
\usepackage{stmaryrd}
\usepackage{adjustbox}
\usepackage{lineno}
\usepackage{libertine}
\usepackage[center]{caption}
\usepackage{subcaption} 
\usepackage[allcolors = blue,colorlinks,linktocpage=true]{hyperref}
\usepackage[abs]{overpic}

\usetikzlibrary{cd}

\title{Two non-semisimple quantum invariants of lens spaces}
\author{Rhea Palak Bakshi \and Mitchell Jubeir \and Quinn T. Kolt}
\address{Department of Mathematics, University of California, Santa Barbara, CA 93106, USA}
\email{\rm rheapalak@math.ucsb.edu $\mid$ rheapalakbakshi@gmail.com}
\email{\rm mitchell\_jubeir@ucsb.edu}
\email{\rm quinn@math.ucsb.edu}

\DeclareMathOperator{\Hom}{Hom}
\DeclareMathOperator{\End}{End}

\DeclareMathOperator{\Rep}{Rep}
\DeclareMathOperator{\Id}{Id}
\DeclareMathOperator{\id}{id}
\def\sl{\mathfrak{sl}}
\DeclareMathOperator{\ev}{ev}
\DeclareMathOperator{\ab}{ab}
\DeclareMathOperator{\coev}{coev}

\DeclareMathOperator{\Proj}{Proj}
\DeclareMathOperator{\opp}{op}

\DeclareMathOperator{\colim}{colim}

\DeclareMathOperator{\mcS}{\mathcal{S}}

\newtheorem{theorem}{Theorem}[section]
\newtheorem{prop}[theorem]{Proposition}
\newtheorem{lemma}[theorem]{Lemma}

\theoremstyle{definition}
\newtheorem{definition}[theorem]{Definition}
\newtheorem{remark}[theorem]{Remark}

\renewcommand{\Vec}{\mathrm{Vec}}
\renewcommand{\epsilon}{\varepsilon}

\def\cC{\mathcal{C}}
\def\mcC{\mathcal{C}}

\def\bom{{\boldsymbol\omega}}

\def\cK{\mathcal{K}}
\def\Pen{\mathcal{P}}

\def\cI{\mathcal{I}}
\def\cS{\mathcal{S}}
\def\SM{\cS_{\cI}}

\def\Ku{\mathscr{K}}
\def\bC{\Bbb{C}}
\def\bZ{\mathbb{Z}}
\def\bk{\Bbbk}

\def\one{\Bbb{1}}
\def\incl{\hookrightarrow}
\def\proj{\twoheadrightarrow}

\date{}

\keywords{admissible skein module, chromatic spherical invariant, Kuperberg invariant, lens space, quantum invariant, non-semisimple tensor category, Hopf algebra}
\subjclass[2020]{Primary: 57K31, Secondary: 16T05,  18M15, 18M30, 57K16}

\begin{document}

\begin{abstract}
    We calculate the chromatic spherical invariant $\Ku_{\Rep(H)}(L(p,q))$  of lens spaces $L(p, q)$ for arbitrary finite-dimensional unibalanced, unimodular Hopf algebras $H$. In the case of the index-2 Nichols Hopf algebra $\cK_2$ and $q=1$, we find that the invariant of $L(p, 1)$ depends only on the parity of $p$ for $p\geq 1$. We also describe the duals $\SM(M)^*$ of the admissible skein modules of handlebodies $M=H_g$ and of $M=L(p, q)$ for arbitrary tensor ideals $\cI$ of a ribbon tensor category. In particular, $\SM(L(p, q))^*$ corresponds with cyclic traces on $\cI$ satisfying a property reminiscent of the modified trace condition.
\end{abstract}

\maketitle
\tableofcontents

\section{Introduction}

The theory of skein modules was introduced independently by J\'ozef H. Przytycki \cite{skeinprzytycki} and Vladimir Turaev \cite{skeinturaev}. Their work provides a powerful framework for generalizing polynomial link invariants, originally defined for the $3$-sphere, to the broader context of arbitrary $3$-manifolds. For a commutative ring $R$ with unity, a skein module of an orientable $3$-manifold is obtained by quotienting the free $R$-module generated by ambient isotopy classes of links in that manifold by local ``skein relations." These modules capture the topology and geometry of $3$-manifolds and provide a bridge between low-dimensional topology, knot theory, and quantum algebra. The Kauffman bracket skein module (KBSM) is arguably the most extensively studied skein module of all and has deep relations to character varieties \cite{sl2cbullock,ps1}, Teichmüller spaces \cite{bw1,fokchekhov,lequantum,kashaev}, cluster algebras \cite{mullerskein}, the Witten-Reshetikhin-Turaev $3$-manifold invariants, TQFTs \cite{BHMV2, BHMV1, Li4, sikoratqft}, and the representation theory of quantum groups, for example, the KBSM is connected to the representation category of the quantum group $U_q(\mathfrak{sl}_2)$.

At generic values of the quantum parameter $q$, the representation categories of quantum groups, such as $U_q(\mathfrak{sl}_2)$, are semisimple, and the categorical (quantum) trace gives nonzero quantum dimensions of representations, allowing for a meaningful Reshetikhin-Turaev type construction of quantum invariants of $3$-manifolds and links. However, when $q$ is specialized to a root of unity, these categories are no longer semisimple, the categorical trace becomes degenerate, and many of the standard quantum invariants trivialize. To overcome this, Geer, Patureau-Mirand, Kujawa, Turaev, and Virelizier {\cite{gkp,gptrace,gpttrace} introduced a generalization of the categorical trace, called the  modified trace, which is non-zero on projectives even when the categorical trace vanishes. Their work led to the construction of non-semisimple $3$-manifold quantum invariants, known as renormalized quantum invariants (RQIs) \cite{costantino_skein_2023,cgprqi,tvunimodular,gpttrace}, and non-semisimple TQFTs \cite{nsstqft1, nsstqft2}. 

Thus, classical skein modules are inherently tied to semisimple categories and do not capture the richer structures \cite{reutter} arising in non-semisimple quantum topology, in particular those associated with the RQIs of $4$-manifolds, such as the skein lasagna module \cite{mww,renwillis}. Motivated by the need to
extend skein-theoretic constructions beyond the semisimple setting, Costantino, Geer and Patureau-Mirand \cite{admissible} introduced {\it admissible skein modules.} These are defined relative to an ideal $\cI$ of a
pivotal or ribbon $\Bbbk$-category $\cC$, typically the ideal of projective objects, and are built from ribbon graphs that contain an edge colored by the ideal in
every connected component, with skein relations from $\cI$, not all of $\cC$. The resulting modules
remain non-trivial in the non-semisimple setting and are naturally related to modified traces.  

Due to the complexity of renormalized quantum invariants and of admissible skein modules, it is difficult to compare their computational power relative to their classical counterparts and few explicit examples exist in the literature. In our paper, we explicitly compute the chromatic spherical invariant and the admissible skein module for lens spaces. An algorithm for the chromatic spherical invariant exists in the case of spherical Hopf algebras \cite{reina}. We do not assume sphericality.

 Our paper is organized as follows. In Section~\ref{sec: prelim}, we
recall the background needed for our main results. Subsection~\ref{subsec:handles}
reviews handle decompositions, handlebodies, and lens spaces, Subsection~\ref{subsec:categories} collects the necessary notions from category theory and Hopf algebras, including tensor ideals and cyclic and
modified traces, and Subsection~\ref{subsection:icolouredgraphs} introduces
colored and admissible ribbon graphs. In Section~\ref{section:kuperbergtype}, we study the chromatic spherical invariant. After recalling its definition in
terms of Heegaard splittings and bichrome diagrams in
Subsection~\ref{subsec:chromatic-def}, we compute it in
Subsection~\ref{subsec:chromatic-lens} for all lens spaces $L(p,q)$ and representation categories of finite-dimensional unibalanced, unimodular Hopf algebras (see Theorem~\ref{thm:chromatic-lens}). For the index-$2$ Nichols Hopf algebra, we show that the invariant of $L(p,1)$ depends only on the
parity of $p$ (see Theorem~\ref{thm:K2-calculation}). In Section~\ref{section:admissible}}, we turn to admissible skein modules. In Subsection~\ref{subsec:handlebodies}, we describe the admissible skein modules of handlebodies, both as direct limits of endomorphism algebras (see Theorem~\ref{limit-skein-module-iso}) and dually in terms of trace-like families of functionals on $\cI^{\otimes g}$ (see Proposition~\ref{prop:semitrace-correspondence}). In Subsection \ref{subsec:admissible-lens} we compute the dual of the admissible skein module of lens spaces $L(p,q)$ (see Theorem~\ref{thm:admissible-lens}). Finally, in
Section~\ref{section:future}, we discuss some future directions that our work can take.

\subsection{Acknowledgements}  R.P.B. was partially supported by the project PID2024-157173NB-I00 funded by MCIN/AEI/10.13039/501100011033 and by Fondo Europeo de Desarrollo Regional (The European Regional Development Fund), The European Union. Q.T.K. is supported by the National Science Foundation Graduate Research Fellowship Program under Grant No. 2139319.\footnote {Any opinions, findings, and conclusions or recommendations expressed in this paper are those of the authors and do not necessarily reflect the views of the National Science Foundation.} The authors would like to thank Nathan Geer for helpful discussions.

\subsection{Statement on the use of AI} Some of the diagrams in this paper were prepared with the assistance of Claude. All diagrams were checked by the authors, who take full responsibility for their correctness. All mathematical content, including all
results and proofs, and all of the writing are the authors' own.

\section{Preliminaries}\label{sec: prelim}

In this section we discuss the topology and category theory needed for our main results.

\subsection{Handle addition and lens spaces}\label{subsec:handles}

Let $(M,\partial M)$ be an oriented $3$-manifold with boundary $\partial M$ and $W$ be a codimension $1$ submanifold of $M$ such that $W \hookrightarrow \partial M$. A handle of index $k$, or a $k$-handle, is a $3$-ball $D^k \times D^{3-k}$ glued to $W$ along $S^{k-1} \times D^{3-k}$ to obtain a new manifold, say $M'$. Explicitly, a $0$-handle is a $3$-ball glued to $W$ along the empty set, a $1$-handle is a solid tube $D^1 \times D^{2}$ glued along its boundary discs $S^{0} \times D^{2}$ to $W$, a $2$-handle is a ``plate'' $D^2 \times D^1$ glued along its annular boundary to $W$, and a $3$-handle is a $3$-ball glued to $W$ along $S^2 \times D^0$. A handle decomposition of a manifold $M$ is its decomposition into $k$-handles, the union of which is $M$. While constructing  manifolds, we glue handles of lower indices first, and handles of the same index are added simultaneously.  \\

A handlebody $H_g$ is a connected, compact, orientable $3$-manifold with boundary whose handle decomposition consists of one $0$-handle and $g$ $1$-handles. The number $g$ of $1$-handles in the handle decomposition is called the genus of $H_g$. A seminal result of Smale, the proof of which uses Morse theory, is that every closed orientable $3$-manifold admits a handle decomposition. A lens space is a closed, connected,  orientable $3$-manifold whose handle decomposition consists of exactly one handle of each index; the gluing is as follows. A $1$-handle is first attached to a $0$-handle to obtain a solid torus. Then a $2$-handle is glued to this solid torus such that the core of the $2$-handle is glued to a $(p,q)$-curve on the boundary of the solid torus. The resultant manifold has a $2$-sphere boundary, gluing a $3$-handle to which produces the lens space $L(p,q)$. See Figure \ref{fig:lens-space-decomposition} for a depiction of $2$-handle addition along the $(p,q)$-curve on the boundary of the solid torus.
\begin{figure}[ht]
    \centering
  $\vcenter{\hbox{\begin{overpic}[unit=1mm, scale = .5]{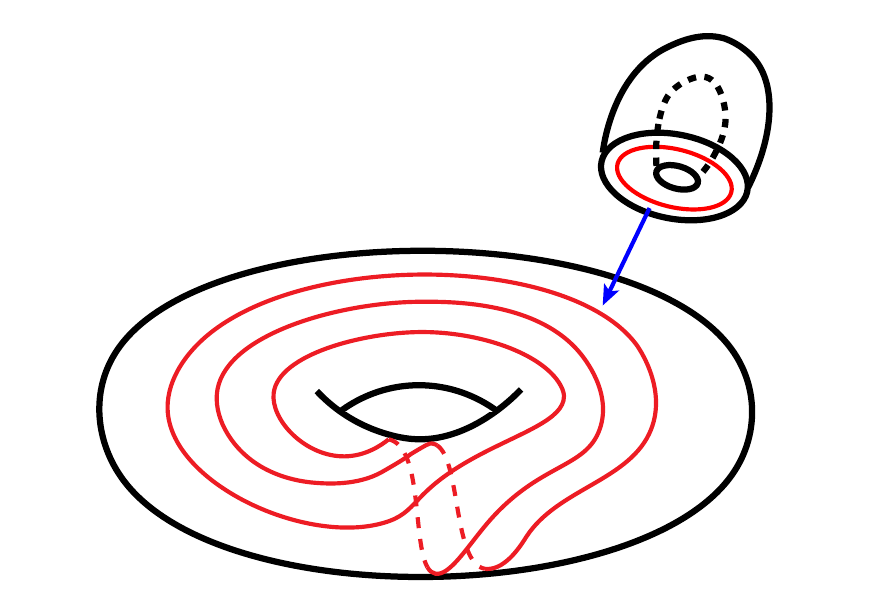}
  \put(54,28){\footnotesize{\text{glue}}}
  \put(66,38){\footnotesize{$2$\text{-handle}}}
  \put(25,2.5){\footnotesize{$T(3,2)$}}
  \end{overpic} }}  $
    \caption{$2$-handle addition along the $(3,2)$-torus curve on the solid torus, followed by gluing a 3-handle, gives us the lens space $L(3,2)$.}
    \label{fig:lens-space-decomposition}
\end{figure} 

\subsection{Category theory and Hopf algebras} \label{subsec:categories} We now define some important terminology in category theory pertinent for our paper.
\begin{definition}[Additional properties and structures on a rigid monoidal category]\label{monoidal-structures}
    We say that a monoidal category $\cC$ is
    \begin{enumerate}
        \item \textit{tensor} if $\mathcal C$ is locally finite, $\Bbbk$-linear, abelian, and rigid with a simple unit;
        \item \textit{braided} if there is a natural \textit{braiding} isomorphism $\beta:\otimes \to \otimes^{\opp}$ such that the following hexagon diagrams commute:
        \begin{center}
            \begin{tikzcd}
                &(Y\otimes X)\otimes Z\arrow[r, "\alpha_{Y,X,Z}"]&Y\otimes (X\otimes Z)\arrow[dr,"\id_Y\otimes \beta_{X,Z}"]\\
                (X\otimes Y)\otimes Z\arrow[ur,"\beta_{X,Y}\otimes\id_Z"]\arrow[dr,"\alpha_{X,Y,Z}"']&&&Y\otimes(Z\otimes X)\\
                &X\otimes (Y\otimes Z)\arrow[r,"\beta_{X,Y\otimes Z}"']&(Y\otimes Z)\otimes X\arrow[ur,"\alpha_{Y,Z,X}"']
            \end{tikzcd}

            \begin{tikzcd}
                &X\otimes (Z\otimes Y)\arrow[r, "\alpha_{X,Z,Y}^{-1}"]&(X\otimes Z)\otimes Y\arrow[dr,"\beta_{X,Z}\otimes \id_Y"]\\
                X\otimes (Y\otimes Z)\arrow[ur,"\id_X\otimes\beta_{Y,Z}"]\arrow[dr,"\alpha_{X,Y,Z}^{-1}"']&&&(Z\otimes X)\otimes Y\\
                &(X\otimes Y)\otimes Z\arrow[r,"\beta_{X\otimes Y, Z}"']&Z\otimes (X\otimes Y)\arrow[ur,"\alpha_{Z,X,Y}^{-1}"']
            \end{tikzcd}
        \end{center}
\begin{figure}[ht]
    \centering
    \begin{subfigure}[b]{0.48\textwidth}
        \centering
  $\vcenter{\hbox{\begin{overpic}[unit=1mm, scale = .3]{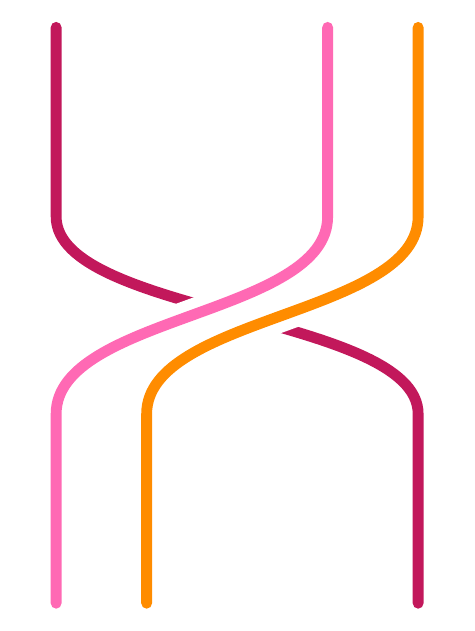}
  \put(-1,3){\footnotesize{$X$}}
    \put(8.5,3){\footnotesize{$Y$}}
        \put(18,3){\footnotesize{$Z$}}
            \put(22,25){\footnotesize{$Y$}}
            \put(13,25){\footnotesize{$X$}}
             \put(-1,25){\footnotesize{$Z$}}
  \end{overpic} }}  = \vcenter{\hbox{\begin{overpic}[unit=1mm, scale = .3]{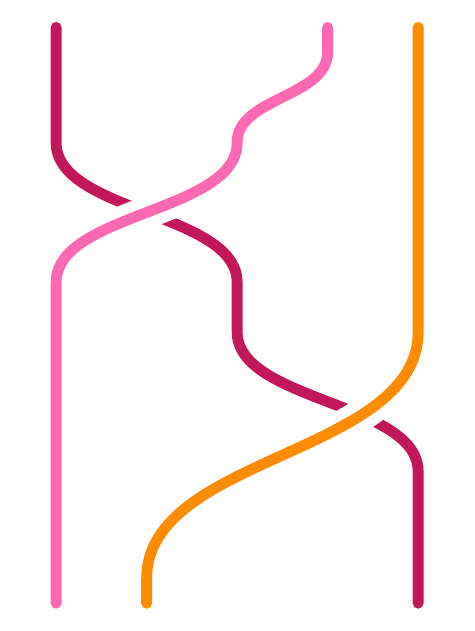}
  \put(-1,3){\footnotesize{$X$}}
    \put(9,3){\footnotesize{$Y$}}
        \put(18,3){\footnotesize{$Z$}}
            \put(22,25){\footnotesize{$Y$}}
            \put(9,25){\footnotesize{$X$}}
             \put(-1,25){\footnotesize{$Z$}}
  \end{overpic} }}$
        \caption{}
        \label{fig:hexagon-a}
    \end{subfigure}
    \hfill
    \begin{subfigure}[b]{0.48\textwidth}
        \centering
  $\vcenter{\hbox{\begin{overpic}[abs, unit=1mm, scale = .3]{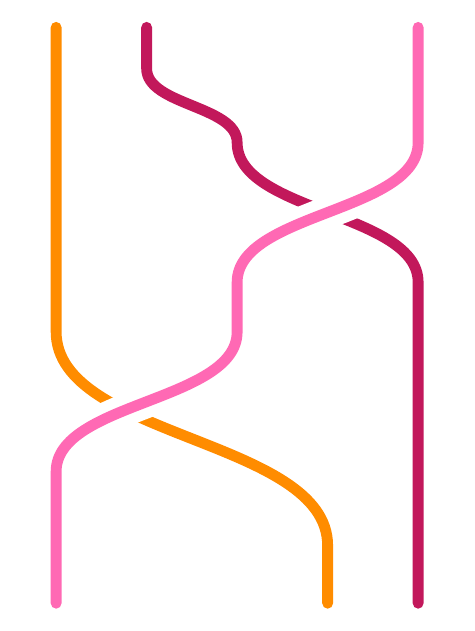}
  \put(-1,3){\footnotesize{$X$}}
    \put(17.6,3){\footnotesize{$Y$}}
        \put(22.3,3){\footnotesize{$Z$}}
            \put(-1,27){\footnotesize{$Y$}}
            \put(3.9,27){\footnotesize{$Z$}}
             \put(22.3,27){\footnotesize{$X$}}
  \end{overpic} }}
  \quad = \quad
\vcenter{\hbox{\begin{overpic}[abs, unit=1mm, scale = .3]{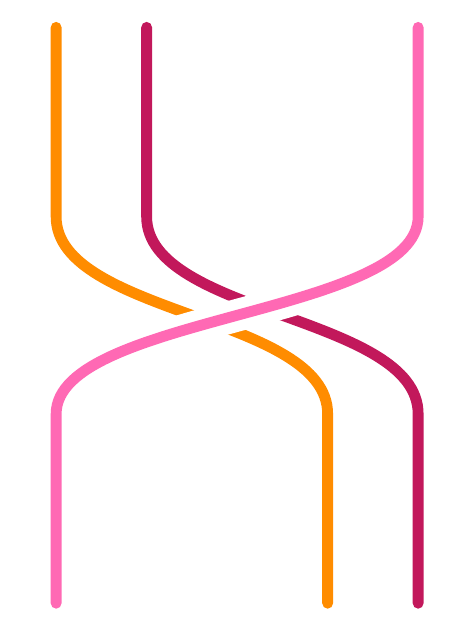}
  \put(-1,3){\footnotesize{$X$}}
    \put(17.6,3){\footnotesize{$Y$}}
        \put(22.3,3){\footnotesize{$Z$}}
            \put(-1,27){\footnotesize{$Y$}}
            \put(3.9,27){\footnotesize{$Z$}}
             \put(22.3,27){\footnotesize{$X$}}
  \end{overpic} }}  $
        \caption{}
        \label{fig:hexagon-b}
    \end{subfigure}
    \caption{Diagrammatic interpretation of the hexagon diagrams.}
    \label{fig:hexagon}
\end{figure}
        
        \item \textit{pivotal} if $\cC$ is rigid and there is a monoidal natural \textit{pivotal structure} isomorphism $\phi:\Id_\cC\Rightarrow (-)^{**}$;
        \item \textit{ribbon} if $\mathcal C$ is braided, rigid, and there is a natural \textit{ribbon twist} isomorphism $\theta:\Id_{\cC}\Rightarrow\Id_{\cC}$ such that 
        \begin{enumerate}
            \item $\theta_\one = \id_\one$;
            \item $\theta_{X^*} = \theta_X^*$ for any $X\in \cC$;
            \item the following diagram commutes for every $X, Y\in\cC$ (the diagrammatic interpretation is given alongside): 
            $$\begin{tikzcd}
                X\otimes Y \arrow[rr, "{\theta_{X\otimes Y}}"] \arrow[d,"{\beta_{X,Y}}"'] && X\otimes Y\\
                Y\otimes X\arrow[rr,"{\beta_{Y,X}}"'] && X\otimes Y\arrow[u,"{\theta_X\otimes\theta_Y}"']
\end{tikzcd}\vcenter{\hbox{\begin{overpic}[abs, unit=1mm, scale = .3]{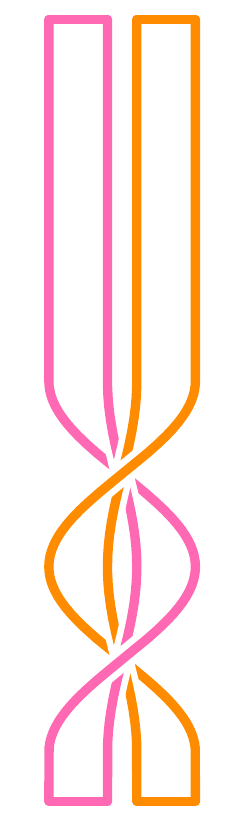}
  \put(-0.7,37.5){\footnotesize{$X$}}
    \put(10.6,37.5){\footnotesize{$Y$}}
  \end{overpic} }}
  \quad = \quad
\vcenter{\hbox{\begin{overpic}[abs, unit=1mm, scale = .3]{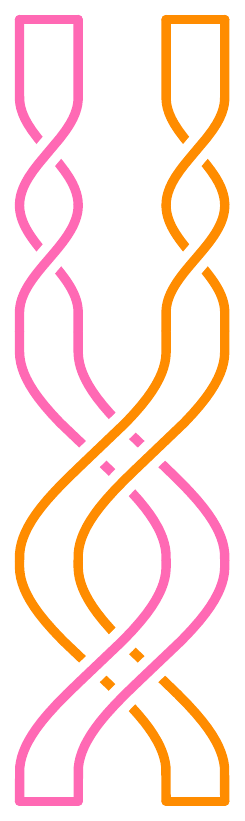}
  \put(-2.2,37.5){\footnotesize{$X$}}
    \put(12.1,37.5){\footnotesize{$Y$}}
  \end{overpic} }}$$
           
           \end{enumerate}
        \item \textit{unimodular} if $\cC$ is a tensor category, and the projective cover $P(\one)$ of the monoidal unit $\one$ is self-dual.
    \end{enumerate}
\end{definition}

\begin{definition}[Additional properties and structures on a Hopf algebra]\label{hopf-structures}
    We say that a Hopf algebra $H$ is
    \begin{enumerate}
        \item \textit{quasitriangular} if there is an invertible \textit{$R$-matrix} $R\in H\otimes H$ such that
        \begin{align*}
            R\Delta(x)R^{-1} &= \Delta^{\mathrm{op}}(x), & (\Delta\otimes\id_H)(R) &= R_{13}R_{23}, & (\id_H\otimes\Delta)(R) &= R_{13}R_{12},
        \end{align*}
        where $R_{12} = R^{(1)}\otimes R^{(2)}\otimes 1$, $R_{13} = R^{(1)}\otimes 1\otimes R^{(2)}$, and $R_{23} = 1\otimes R^{(1)}\otimes R^{(2)}$;
        \item \textit{pivotal} if there is a \textit{pivot} group-like $g\in H$ (i.e., nonzero $g\in H$ with $\Delta(g)=g\otimes g$) such that $S^2(x) = gxg^{-1}$ for all $x\in H$;
        \item \textit{ribbon} if $H$ is quasitriangular and there is an invertible central \textit{ribbon element} ${\mathbf{v}}\in Z(H)$ such that
        \begin{align*}
            {\mathbf{v}}^2 &= uS(u), & \Delta({\mathbf{v}}) &= (R_{21}R)^{-1}({\mathbf{v}}\otimes{\mathbf{v}}), & S({\mathbf{v}}) &= {\mathbf{v}}, & \epsilon({\mathbf{v}}) &= 1,
        \end{align*}
        where $u = S(R^{(2)})R^{(1)}$ is the \textit{Drinfeld element} and $R_{21} = R^{(2)}\otimes R^{(1)}$;
        \item \textit{unimodular} if left (i.e., $x\Lambda = \epsilon(x)\Lambda$) and right (i.e., $\Lambda' x = \epsilon(x)\Lambda'$) integrals coincide.
        \item \textit{counimodular} if left (i.e., $(\lambda' \otimes id)\Delta(x)=\lambda'(x)1$) and right (i.e., $(id \otimes \lambda)\Delta(x)=\lambda(x)1$) cointegrals coincide.
    \end{enumerate}
\end{definition}

    \begin{prop}
        All ribbon Hopf algebras are pivotal with $g = \mathbf{v}^{-1}u$.
    \end{prop}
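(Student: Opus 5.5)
We must show two things about $g=\mathbf{v}^{-1}u$: that it is group-like, and that conjugation by it implements $S^2$. The approach is to rely on the standard properties of the Drinfeld element $u=S(R^{(2)})R^{(1)}$ in a quasitriangular Hopf algebra, which follow from the axioms in Definition~\ref{hopf-structures}(1):
\begin{enumerate}
\item $u$ is invertible, and $S^2(x)=uxu^{-1}$ for all $x\in H$;
\item $\Delta(u)=(R_{21}R)^{-1}(u\otimes u)$ and $\epsilon(u)=1$.
\end{enumerate}
These are Drinfeld's classical identities. If they are not quoted from a standard reference such as Kassel's book, they must be verified first.

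For property (1), start from the quasitriangularity relation $R\Delta(x)=\Delta^{\mathrm{op}}(x)R$ written in components. Apply $S\otimes\id$ (or $\id\otimes S^2$) and then multiply the legs together. This yields $S^2(x)u=ux$. Invertibility comes from exhibiting the explicit inverse $u^{-1}=R^{(2)}S^2(R^{(1)})$. This computation uses $(S\otimes S)(R)=R$ and $(S\otimes\id)(R)=R^{-1}$, which themselves follow from the coproduct axioms on $R$ together with $(\epsilon\otimes\id)(R)=1$.

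Property (2) is the main obstacle. It is a lengthy but mechanical manipulation: compute $\Delta(u)=\Delta(S(R^{(2)}))\Delta(R^{(1)})$, expand $\Delta\otimes\Delta$ of $R$ using $R_{13}R_{23}$ and $R_{13}R_{12}$, and use the quantum Yang--Baxter equation to regroup. I would follow Drinfeld's argument directly and cite it rather than reproduce it.

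With both properties in hand, the proposition follows quickly.
\begin{itemize}
\item \emph{Group-like.} Since $\mathbf{v}$ is central and invertible, so is $\mathbf{v}^{-1}$. Inverting the ribbon axiom gives $\Delta(\mathbf{v}^{-1})=(\mathbf{v}^{-1}\otimes\mathbf{v}^{-1})(R_{21}R)$. Therefore
\[
\Delta(g)=\Delta(\mathbf{v}^{-1})\Delta(u)=(\mathbf{v}^{-1}\otimes\mathbf{v}^{-1})(R_{21}R)(R_{21}R)^{-1}(u\otimes u)=g\otimes g.
\]
\item \emph{Nonzero.} We have $\epsilon(g)=\epsilon(\mathbf{v})^{-1}\epsilon(u)=1$, so $g\neq 0$.
\item \emph{Conjugation.} Because $\mathbf{v}^{\pm1}$ is central, $gxg^{-1}=\mathbf{v}^{-1}uxu^{-1}\mathbf{v}=uxu^{-1}=S^2(x)$.
\end{itemize}
Hence $g$ is a pivot, and $H$ is pivotal.
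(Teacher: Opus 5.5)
Your proof is correct. The paper gives no argument of its own for this proposition; it quotes it as standard background. What you wrote is the usual textbook proof, as in Kassel: Drinfeld's identities $S^2(x)=uxu^{-1}$, $\Delta(u)=(R_{21}R)^{-1}(u\otimes u)$ and $\epsilon(u)=1$ are combined with centrality of $\mathbf{v}$ and the ribbon coproduct axiom, and the factors of $R_{21}R$ cancel.

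You rightly use only the ribbon axioms $\Delta(\mathbf{v})=(R_{21}R)^{-1}(\mathbf{v}\otimes\mathbf{v})$ and $\epsilon(\mathbf{v})=1$; the axioms $\mathbf{v}^2=uS(u)$ and $S(\mathbf{v})=\mathbf{v}$ are not needed for the pivot property.
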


    Given a Hopf algebra $H$ over a field $\bk$, the category $\Rep(H)$ of finite-dimensional representations of $H$ (that is, $H$-modules) is a tensor category with a fiber functor to $\Vec$, the category of finite-dimensional vector spaces. The additional structures described in Definition \ref{hopf-structures} also correspond with categorical data as described in the following proposition \cite{EGNO}.
    \begin{prop}[Tannaka duality]
        We have the following correspondences relating $H$ and $\Rep(H)$:
        \begin{enumerate}
            \item $R$-matrices on $H$ are in one-to-one correspondence with braidings on $\Rep(H)$, where $\beta_{M, N}:m\otimes n \mapsto R_{21}\cdot (n\otimes m)$;
            \item pivots on $H$ are in one-to-one correspondence with pivotal structures on $\Rep(H)$, where $\phi_M:m\mapsto (\phi\mapsto \phi(g\cdot m))$;
            \item ribbon elements on $H$ given an $R$-matrix $R$ are in one-to-one correspondence with ribbon structures on $\Rep(H)$ given the braiding corresponding to $R$, where $\theta_M:m\mapsto \mathbf{v}^{-1}\cdot m$;
            \item $H$ is unimodular if and only if $\Rep(H)$ is unimodular.
        \end{enumerate}
    \end{prop}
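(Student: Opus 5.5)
The plan is to run everything through the forgetful (fiber) functor $F:\Rep(H)\to\Vec$ and the reconstruction isomorphisms
\[
\operatorname{End}(F)\cong H,\qquad \operatorname{End}(F\otimes F)\cong H\otimes H .
\]
These come from Yoneda applied to the regular module: a natural endomorphism $\eta$ is determined by $\eta_H(1)$, and naturality with respect to the right multiplications $r_x:H\to H$ shows that $\eta_M(m)=\eta_H(1)\cdot m$. Every structure in the statement is a natural transformation between functors whose underlying vector spaces are canonically those of $F$, $F\otimes F$ or $F^{**}$. So each structure is encoded by one element of $H$ or $H\otimes H$, and each axiom becomes an identity in $H$. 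For each part I prove two directions: the element yields a structure, and the assignment is bijective because Yoneda forces the formula.

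\textbf{Part (1).} Given $R$, set $\beta_{M,N}(m\otimes n)=R_{21}\cdot(n\otimes m)$.
\begin{itemize}
\item The map is $H$-linear precisely because $R\Delta(x)R^{-1}=\Delta^{\opp}(x)$.
\item Naturality in $M$ and $N$ is automatic.
\item The two hexagons, evaluated on $H\otimes H\otimes H$ at $1\otimes1\otimes1$, are exactly $(\Delta\otimes\id)(R)=R_{13}R_{23}$ and $(\id\otimes\Delta)(R)=R_{13}R_{12}$.
\end{itemize}
Conversely, given a braiding $\beta$, define $R_{21}$ as $\beta_{H,H}(1\otimes1)$ composed with the flip. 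Naturality along $r_x\otimes r_y$ recovers the formula for $\beta$ on all modules, and invertibility of $\beta$ gives invertibility of $R$.

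\textbf{Part (2).} I identify $M^{**}$ with $M$ as vector spaces via the canonical map. Under this identification $H$ acts on $M^{**}$ through $S^2$. A natural transformation $\Id\Rightarrow(-)^{**}$ is then a family $m\mapsto g\cdot m$ for some $g\in H$, by the same Yoneda argument.
\begin{itemize}
\item $H$-linearity means $g\,x=S^2(x)\,g$ for all $x\in H$.
\item Invertibility means $g$ is a unit.
\item Monoidality, evaluated on $H\otimes H$, reads $\Delta(g)=g\otimes g$.
\end{itemize}
Together these say that $g$ is a pivot.

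\textbf{Part (3).} A natural automorphism of $\Id$ is multiplication by an invertible element, and $H$-linearity forces it to be central; write it as $\mathbf v^{-1}$. The axioms translate one at a time:
\begin{itemize}
\item $\theta_\one=\id$ is $\epsilon(\mathbf v)=1$.
\item The balancing square, written as $\theta_{X\otimes Y}=\beta_{Y,X}\beta_{X,Y}(\theta_X\otimes\theta_Y)$, becomes on $H\otimes H$ the coproduct formula for $\mathbf v$.
\item $\theta_{X^*}=\theta_X^*$ becomes $S(\mathbf v)=\mathbf v$, since $H$ acts on $X^*$ through $S$.
\end{itemize}
The remaining condition $\mathbf v^2=uS(u)$ has no direct counterpart among the categorical axioms. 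I would derive it from the other three, using the standard identities $\Delta(u)=(R_{21}R)^{-1}(u\otimes u)$ and $S^2(x)=uxu^{-1}$. Then $\mathbf v^{-1}u$ is group-like, and applying $S$ together with the centrality of $\mathbf v$ gives $uS(u)=\mathbf v^2$. This is also the step that proves the earlier proposition that $g=\mathbf v^{-1}u$ is a pivot.

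\textbf{Part (4).} I expect this to be the main obstacle, because it involves the structure of projectives rather than a direct translation of axioms. I would use that $H$ is Frobenius. Let $\Lambda$ be a nonzero left integral and let $\alpha\in H^*$ be the distinguished character defined by $\Lambda x=\alpha(x)\Lambda$; then $H$ is unimodular if and only if $\alpha=\epsilon$.

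Take a primitive idempotent $e$ with $P(\one)\cong He$. Since $H$ is Frobenius with Nakayama automorphism determined by $\alpha$, the socle of $P(\one)$ is the one-dimensional module $\bk_\alpha$ (up to the convention $\alpha$ versus $\alpha^{-1}$). A projective indecomposable is determined by its top, and duality swaps top and socle. Hence $P(\one)^*$ is the projective cover of $\bk_\alpha^*\cong\bk_{\alpha^{-1}}$, i.e. $P(\one)^*\cong P(\alpha^{-1})$. Therefore $P(\one)\cong P(\one)^*$ if and only if $\alpha=\epsilon$, which is unimodularity.

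The delicate point is pinning down the socle of $P(\one)$ with the correct character. I would do this by noting that $\Lambda$ spans a one-dimensional ideal $\bk\Lambda\cong\bk_\alpha$ inside $H$ on which left multiplication acts through $\epsilon$. Since $\epsilon(\Lambda)\neq 0$ only in the semisimple case, in general I would instead pass through the Frobenius form $\lambda$ (a cointegral) to identify $\operatorname{soc}(He)$.
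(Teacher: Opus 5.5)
Your parts (1)--(3) are correct. The paper gives no argument of its own for this proposition; it only cites EGNO. Your Yoneda-style translation of each axiom into an identity in $H$ or $H\otimes H$ is the standard argument. You are also right that $\mathbf v^2=uS(u)$ is forced by the other conditions: centrality, $S(\mathbf v)=\mathbf v$ and the coproduct formula make $\mathbf v^{-1}u$ group-like, and that is what makes (3) a bijection with the paper's four-condition definition.

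The gap is in (4), at exactly the step you call delicate, and that step is the whole content of the part. Your claim $\mathrm{soc}\,P(\one)\cong\bk_{\alpha^{\pm1}}$ rests on an unproved description of the Nakayama automorphism, and your fallback (``pass through the Frobenius form'') is not an argument.

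Your reason for abandoning the integral is also a misdiagnosis: $\epsilon(\Lambda)$ is irrelevant. What matters is which indecomposable summand of ${}_HH$ contains the trivial submodule $\bk\Lambda$, and the right action $\Lambda x=\alpha(x)\Lambda$ decides this. The argument runs as follows.

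\begin{enumerate}
\item Let $e$ be a primitive idempotent with $He\cong P(\one)$. Since $H$ is Frobenius, $He$ is injective with simple socle.
\item Since $M\mapsto M^*$ is an exact contravariant autoequivalence of $\Rep(H)$, $(He)^*$ is an indecomposable projective with top $\mathrm{soc}(He)^*$.
\item Hence $P(\one)^*\cong P(\one)$ iff $\mathrm{soc}(He)\cong\bk_\epsilon$ iff $\Hom_H(\bk_\epsilon,He)\neq 0$.
\item By uniqueness of integrals, $\Hom_H(\bk_\epsilon,H)$ is spanned by $1\mapsto\Lambda$. So the condition in step 3 holds iff $\Lambda\in He$, i.e.\ iff $\Lambda e=\Lambda$.
\item But $\Lambda e=\alpha(e)\Lambda$ with $\alpha(e)\in\{0,1\}$.
\item Finally, $\alpha(e)=1$ iff $\Hom_H(He,\bk_\alpha)\cong e\cdot\bk_\alpha\neq 0$ iff $\bk_\alpha\cong\mathrm{top}(He)=\bk_\epsilon$ iff $\alpha=\epsilon$.
\end{enumerate}

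This closes (4) without any Nakayama automorphism, and it makes your $\alpha$ versus $\alpha^{-1}$ worry moot.

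If you want the socle exactly, the same computation gives it. Step 6 shows $\Lambda$ lies in the summand with top $\bk_\alpha$, so $\mathrm{soc}\,P(\bk_\alpha)\cong\bk_\epsilon$. Tensoring with the invertible module $\bk_{\alpha^{-1}}$ then gives $\mathrm{soc}\,P(\one)\cong\bk_{\alpha^{-1}}$ and $P(\one)^*\cong P(\bk_\alpha)$ in your convention. This is the opposite of what you wrote, though it does not affect the equivalence.
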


    \begin{definition}[Tensor ideal]
        Let $\cI$ be a full subcategory of a tensor category $\cC$. Then, $\cI$ is a \textit{tensor ideal} of $\cC$ if $\cI$ is closed under tensor product with objects in $\cC$ and retracts.
    \end{definition}

We recall the notion of a modified trace on a tensor ideal of a tensor category, following Geer, Kujawa, Patureau-Mirand, and Virelizier \cite{modifiedtrace1, modifiedtrace2}.

    \begin{definition}[Cyclic and modified traces] Let $\mathcal C$ be a tensor category and $X, Y \in \mathcal{C}$. The \emph{right partial trace} of
$f \in \operatorname{End}_{\mathcal{C}}(X \otimes Y)$ is
\[
  \operatorname{ptr}_r(f)
  = (\mathrm{id}_X \otimes \overrightarrow{\operatorname{ev}}_Y)
    \circ (f \otimes \mathrm{id}_{Y^*})
    \circ (\mathrm{id}_X \otimes \overleftarrow{\operatorname{coev}}_Y) = \vcenter{\hbox{\begin{overpic}[scale = 0.04]{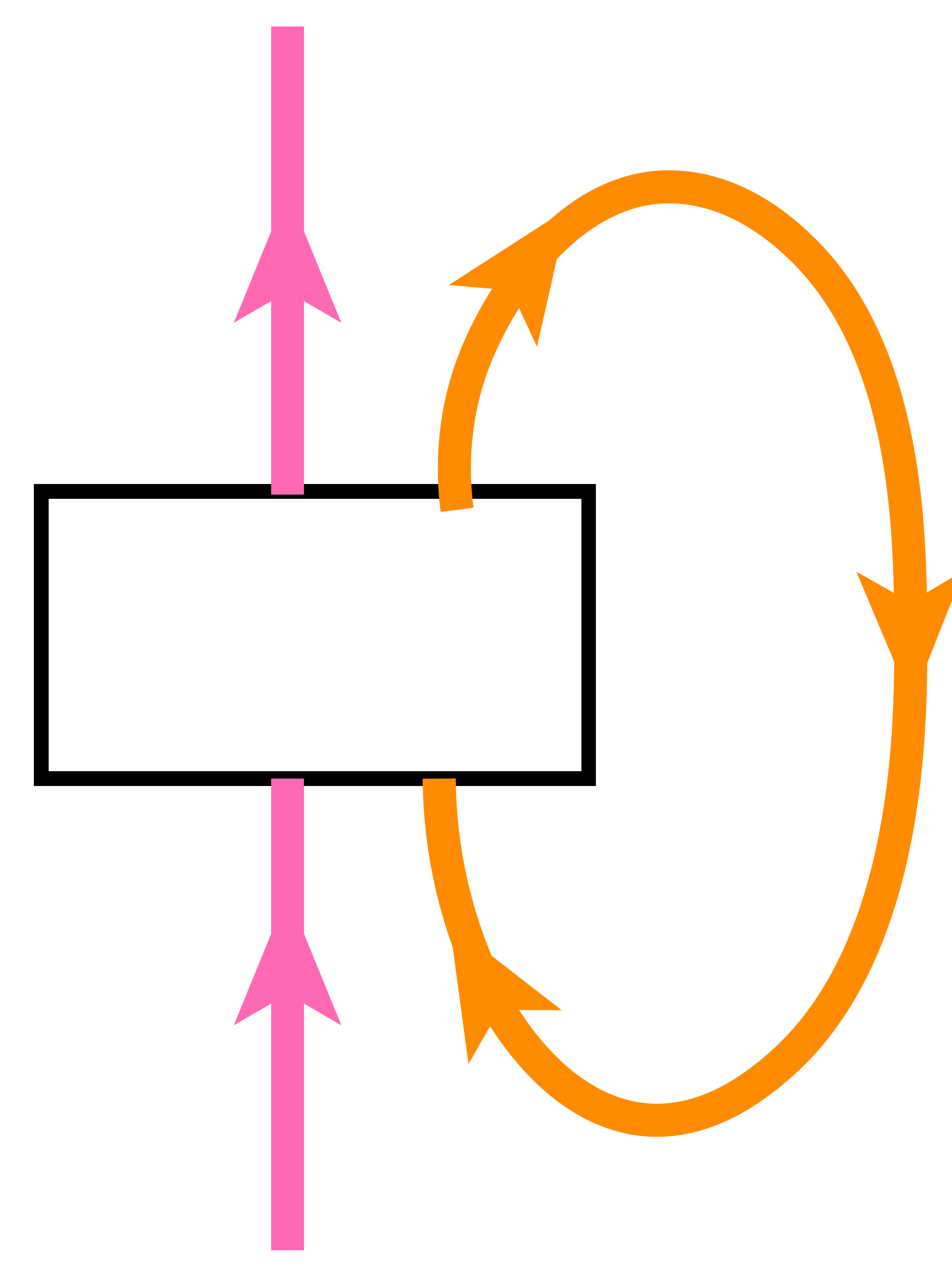}
\put(14,32){\tiny{$f$}}
\put(6,13){\tiny{$X$}}
\put(6,50){\tiny{$X$}}
\put(40,32){\tiny{$Y$}}
\end{overpic} }}
  \in \operatorname{End}_{\mathcal{C}}(X).
\]
The \emph{left partial trace} of
$f \in \operatorname{End}_{\mathcal{C}}(Y \otimes X)$ is
\[
  \operatorname{ptr}_l(f)
  = (\overleftarrow{\operatorname{ev}}_Y \otimes \mathrm{id}_{X})
    \circ (\mathrm{id}_{Y^*} \otimes f)
    \circ (\overrightarrow{\operatorname{coev}}_Y \otimes \mathrm{id}_X)= \vcenter{\hbox{\begin{overpic}[scale = 0.04]{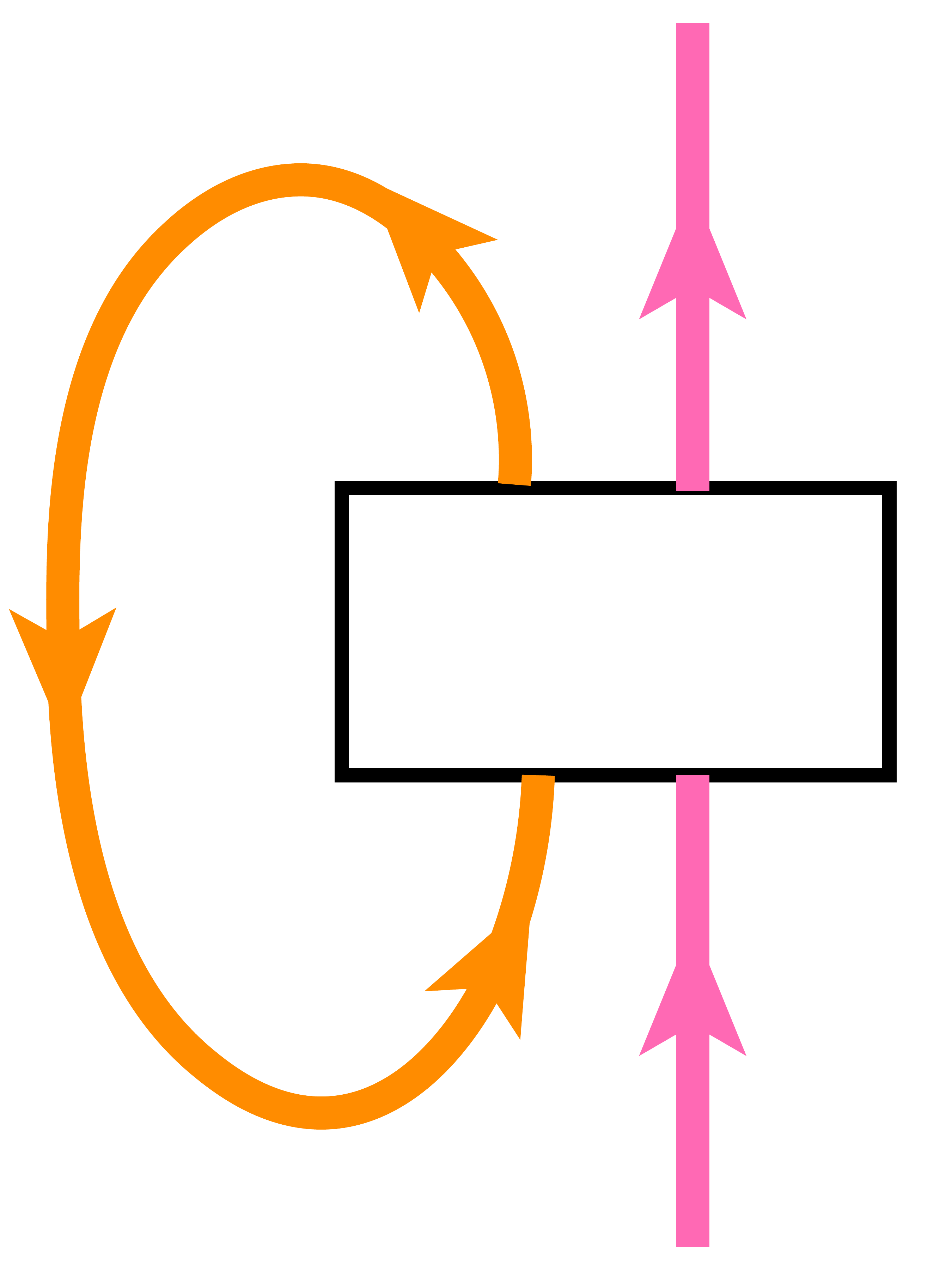}
\put(30,32.5){\tiny{$f$}}
\put(40,13){\tiny{$X$}}
\put(40,50){\tiny{$X$}}
\put(6.5,32){\tiny{$Y$}}
\end{overpic} }}
  \in \operatorname{End}_{\mathcal{C}}(X).
\]

A family of $\Bbbk$-linear maps $\{\, \mathsf{t}_P \colon \operatorname{End}_{\mathcal{C}}(P) \to \Bbbk
  \,\}_{P \in \mathcal{I}}$ on a tensor ideal $\mathcal I$ of $\cC$ is: 

  \begin{enumerate}[(i)]
  
      \item a \textit{cyclic trace} if, for all $P, Q \in \mathcal{I}$ and all morphisms
        $f \colon Q \to P$ and $g \colon P \to Q$, $$t_P\left(\vcenter{\hbox{\begin{overpic}[scale = 0.04]{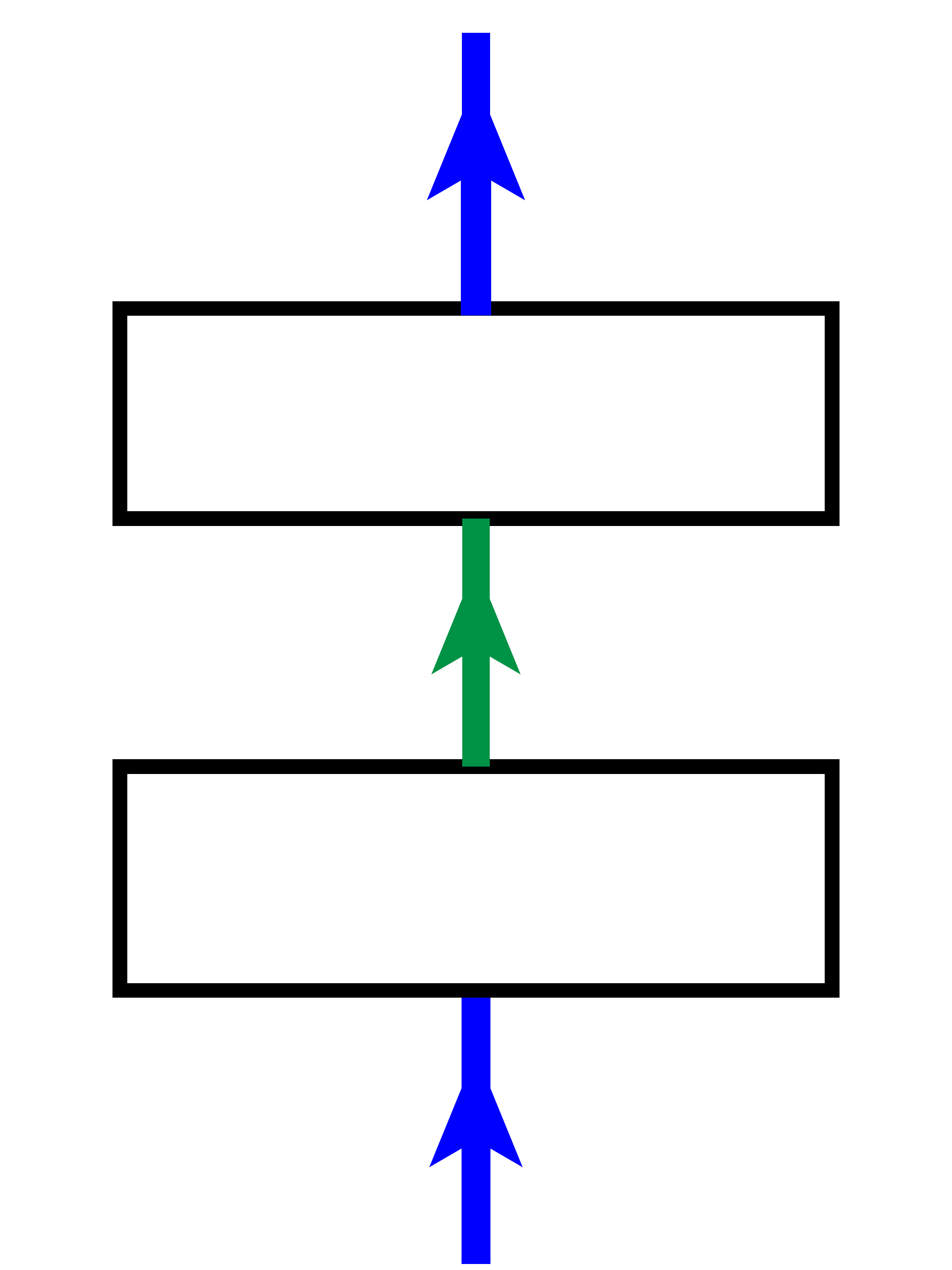}
\put(30,32.5){\tiny{$Q$}}
\put(24,44.5){\tiny{$f$}}
\put(24,20){\tiny{$g$}}
\put(30,56){\tiny{$P$}}
\put(30,6){\tiny{$P$}}
\end{overpic} }}\right)=\mathsf{t}_P(f \circ g) = \mathsf{t}_Q(g \circ f) = t_Q\left(\vcenter{\hbox{\begin{overpic}[scale = 0.04]{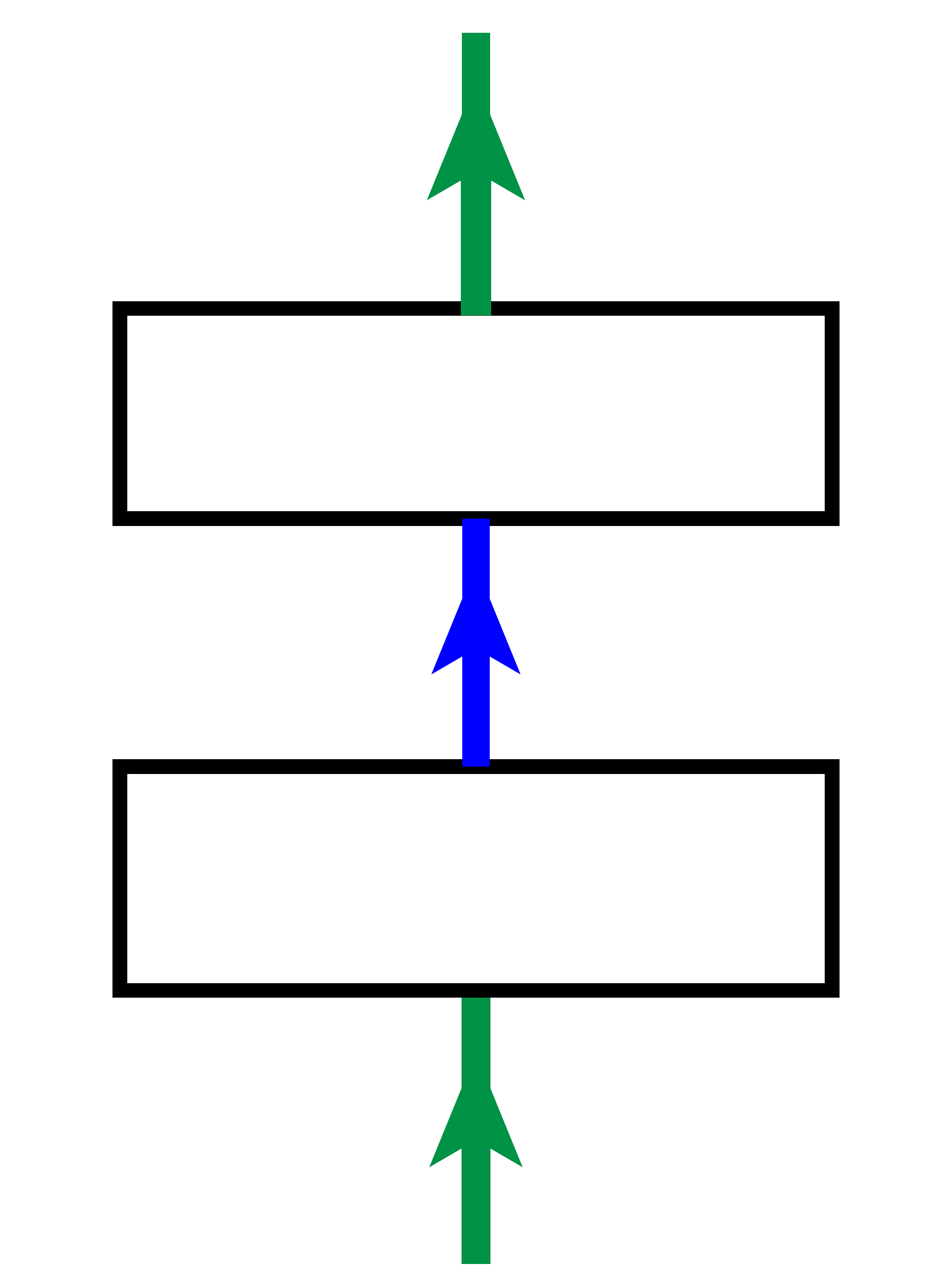}
\put(30,32.5){\tiny{$P$}}
\put(24,45){\tiny{$g$}}
\put(24,19.5){\tiny{$f$}}
\put(30,56){\tiny{$Q$}}
\put(30,6){\tiny{$Q$}}
\end{overpic} }}\right), \ \text{and}$$ 
    \item a {\it right modified trace}, also called a \textit{right m-trace}, if $t$ is a cyclic trace and, for all $P \in \mathcal{I}$,
        $X \in \mathcal{C}$, and
        $f \in \operatorname{End}_{\mathcal{C}}(P \otimes X)$,
        $$t_{P\otimes X}\left(\vcenter{\hbox{\begin{overpic}[scale = 0.04]{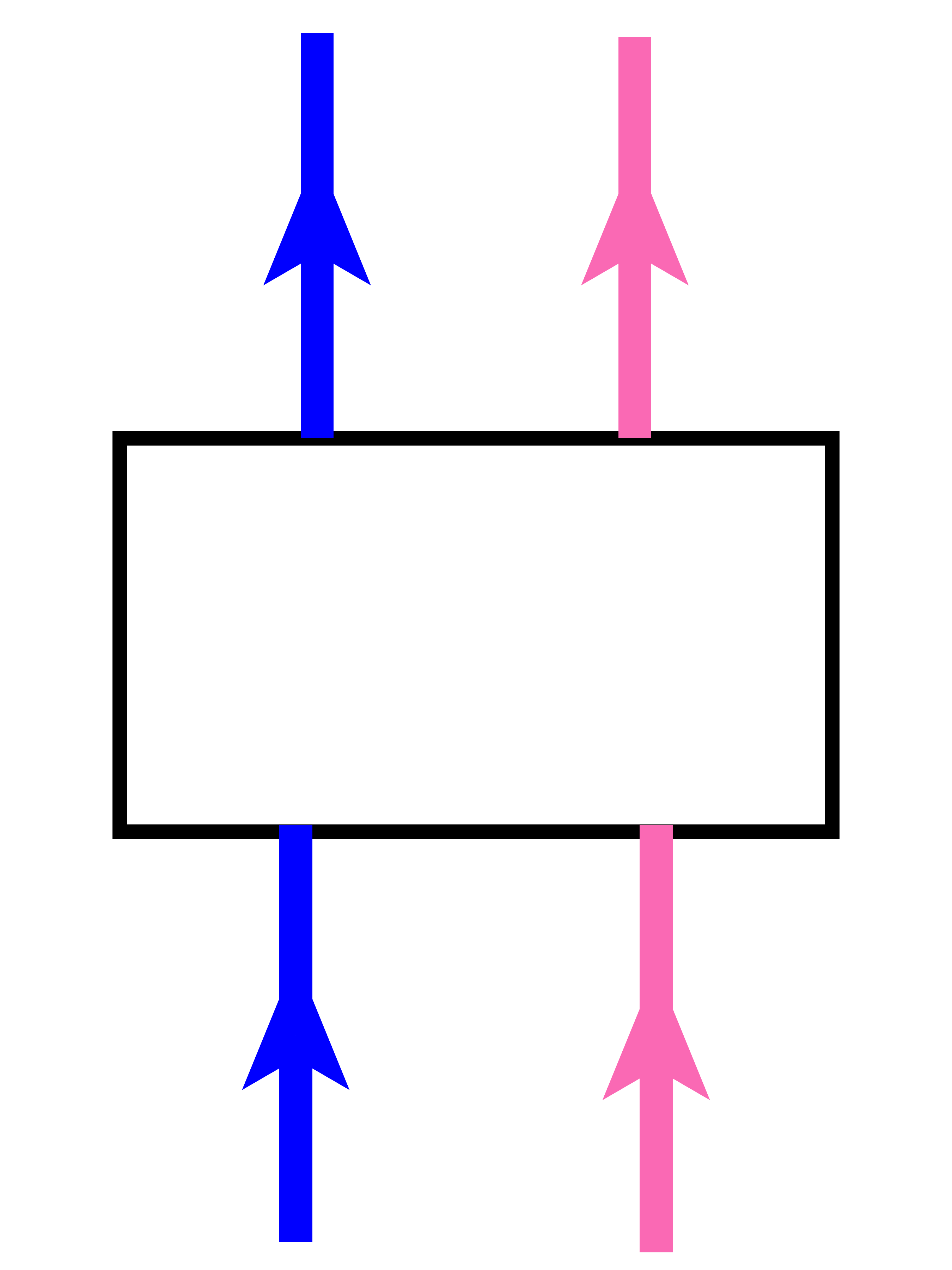}
\put(7,10){\tiny{$P$}}
\put(39,10){\tiny{$X$}}
\put(23,32){\tiny{$f$}}
\put(7,55){\tiny{$P$}}
\put(38,55){\tiny{$X$}}
\end{overpic} }}\right)=\mathsf{t}_{P \otimes X}(f)
                    = \mathsf{t}_P\bigl(\operatorname{ptr}_r(f)\bigr)=t_P\left(\vcenter{\hbox{\begin{overpic}[scale = 0.04]{rightpartialtrace1}
\put(14,32){\tiny{$f$}}
\put(6,13){\tiny{$P$}}
\put(6,50){\tiny{$P$}}
\put(41,32){\tiny{$X$}}

\end{overpic} }}\right).$$
      
  \end{enumerate}

 A {\it left m-trace} may be defined similarly. An {\it m-trace} on $\mathcal I$ is a right m-trace that is also a left m-trace. An m-trace $\{t_P\}_{P\in\cI}$ is \textit{non-degenerate} if for every object $P\in \cI$, the pairing $\Hom(\one\to P)\otimes \Hom(P\to \one)\to \bk$ induced by $t_P$, generated by $(f\otimes g)\mapsto t_P(f\circ g)$, is non-degenerate.

\end{definition}
In a ribbon category, left and right m-traces agree and hence are always m-traces. Modified traces and cyclic traces are essential in the computations for admissible skein modules and chromatic spherical invariant. For example, the dual of the admissible skein module of \(S^2\) is naturally isomorphic to the space of modified traces on \(\mathcal I\) (see \cite{admissible} and Section \ref{section:admissible}).

\subsection{$\cC$- and $\cI$-colored graphs}\label{subsection:icolouredgraphs}

\begin{definition} 
    Let $\cC$ be a pivotal tensor category with a tensor ideal $\cI$. An {\it $\mathcal I$-colored graph} in a 2-manifold $M$ (or 3-manifold) is an oriented multigraph embedded in $M$ (or $\partial M$) up to isotopy, whose edges are non-intersecting and colored by objects in $\mathcal I$ and whose vertices are thickened to coupons that are colored by morphisms in $\mathcal I$ which are compatible with the object labels.
    
    Let $\cC$ be a ribbon tensor category with a tensor ideal $\cI$. An {\it $\mathcal I$-colored ribbon graph} in a 3-manifold $M$ (or $2$-manifold) is an oriented ribbon multigraph embedded in $M$ (or $M\times[0,1]$) up to isotopy, whose ribbons are non-intersecting and colored by objects in $\mathcal I$ and whose coupons are colored by morphisms in $\mathcal I$ which are compatible with the object labels.
\end{definition}
Note that any $\cI$-colored graph/ribbon graph is, in particular, a $\cC$-colored graph/ribbon graph. Given a $\cC$-colored graph $\Gamma$ in a square $[0,1]^2$ with distinguished directions, possibly with endpoints of strings on the top/bottom boundary, the Penrose functor $\Pen_2$ assigns $\Gamma$ to its corresponding morphism in $\cC$ prescribed by the graphical calculus. Let $\pi:[0,1]^3\to[0,1]^2$ be the projection of the cube onto its back face. The projection $\pi$ defines a map $\pi^*$ from $\cI$-colored ribbon graphs in $[0,1]^3$ to $\cI$-colored graphs in $[0,1]^2$: isotope $\Gamma$ so that $\pi(\Gamma)$ has transverse self-intersection, then replace ribbons by edges, crossings by coupons colored by the braiding isomorphism, and ribbon twists by coupons colored by the twist isomorphism. Given a $\cC$-colored ribbon graph $\Gamma$ in a cube $[0,1]^3$, the Reshetikhin-Turaev functor is defined as the composition $\Pen_3 = \Pen_2\circ\pi^*$. An example of evaluating $\Pen_3$ on a ribbon graph embedded in $[0,1]^3$ is given in Figure \ref{fig:rt-example}.
\begin{figure}[ht]
  \centering
  $\vcenter{\hbox{\begin{overpic}[abs, unit=1mm, scale = .24]{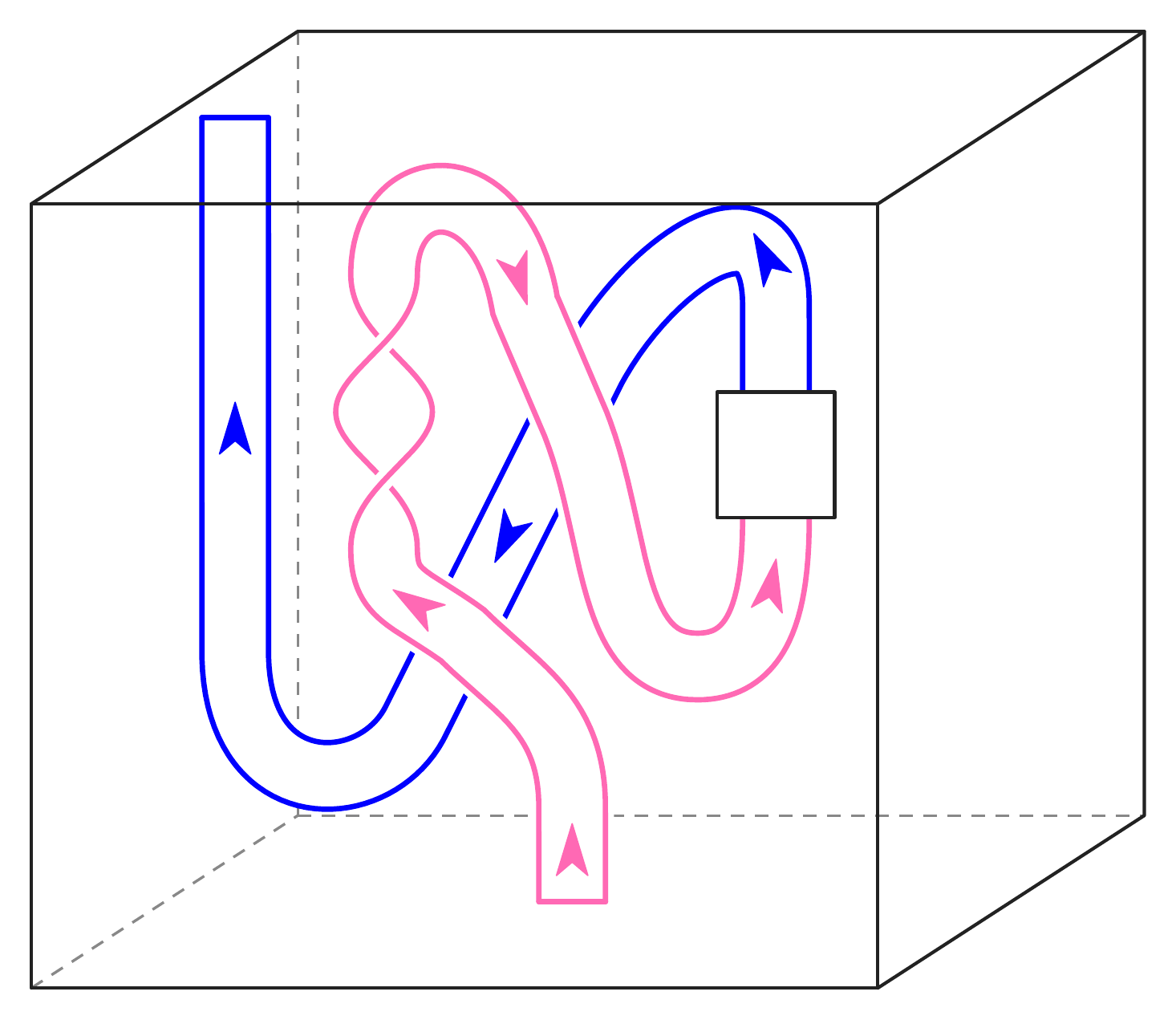}
    \put(8.6,33.8){\makebox(0,0){\footnotesize{$P$}}}
    \put(32.2,8.6){\makebox(0,0){\footnotesize{$X$}}}
    \put(39.3,28.6){\makebox(0,0){\footnotesize{$F$}}}
  \end{overpic} }}
  \quad \overset{\pi^*}{\longmapsto} \quad
  \vcenter{\hbox{\begin{overpic}[abs, unit=1mm, scale = .24]{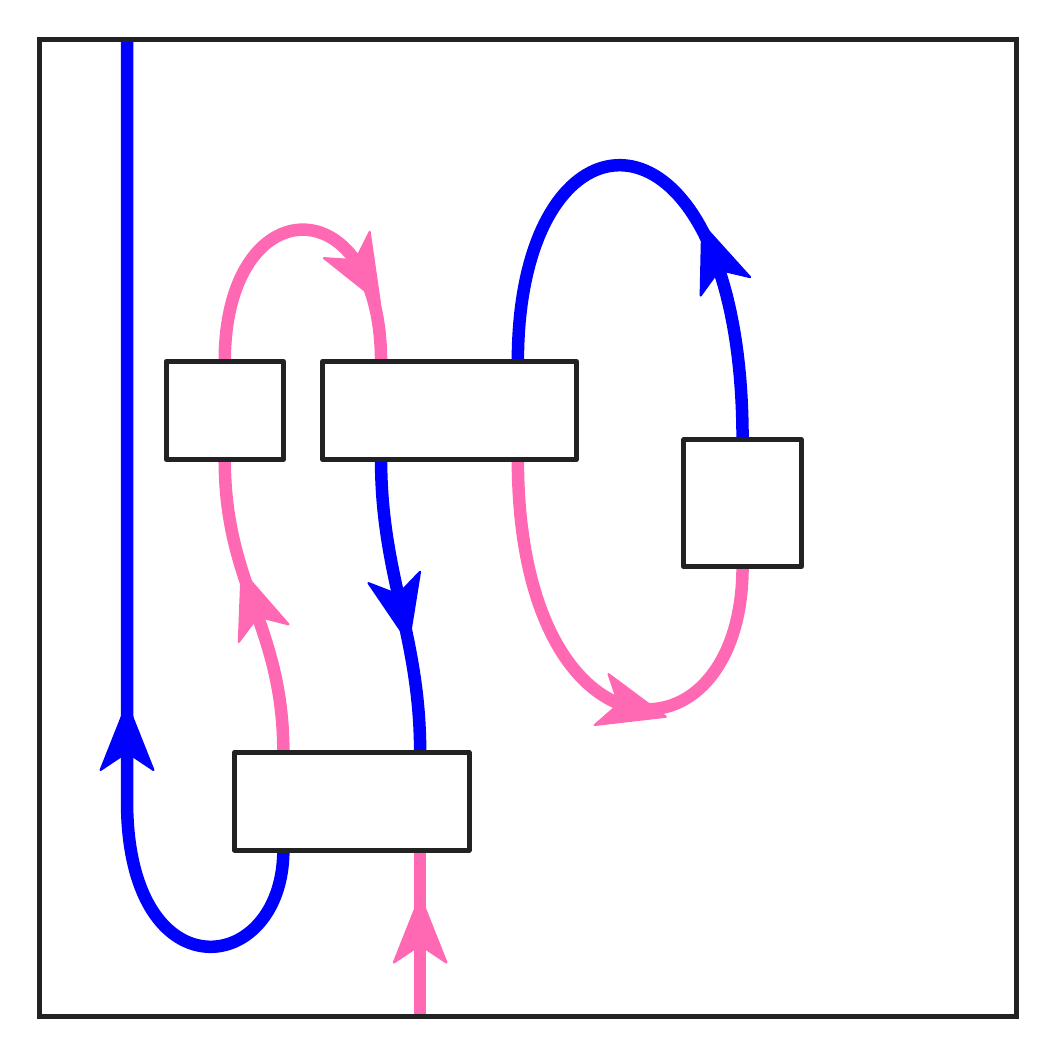}
    \put(9.1,26.2){\makebox(0,0){\footnotesize{$\theta$}}}
    \put(18.2,26.2){\makebox(0,0){\footnotesize{$\beta^{-1}$}}}
    \put(14.3,10.3){\makebox(0,0){\footnotesize{$\beta^{-1}$}}}
    \put(30.2,22.4){\makebox(0,0){\footnotesize{$F$}}}
    \put(19.5,4.0){\makebox(0,0){\footnotesize{$X$}}}
    \put(12.7,19.8){\makebox(0,0){\footnotesize{$X$}}}
    \put(12.3,36.0){\makebox(0,0){\footnotesize{$X$}}}
    \put(32.5,16.3){\makebox(0,0){\footnotesize{$X$}}}
    \put(7.5,17.4){\makebox(0,0){\footnotesize{$P$}}}
    \put(18.9,18.2){\makebox(0,0){\footnotesize{$P$}}}
    \put(32.5,31.8){\makebox(0,0){\footnotesize{$P$}}}
  \end{overpic} }}$
  $$\overset{\Pen_2}{\longmapsto} (\id_P\otimes \overrightarrow{\ev}_X\otimes \overleftarrow{\ev}_P)\circ(\id_P\otimes \theta_X\otimes \beta^{-1}_{X,P}\otimes F)\circ(\id_{P\otimes X\otimes P^*}\otimes \overrightarrow{\coev}_X)\circ (\id_P\otimes \beta_{X,P}^{-1})\circ(\overleftarrow{\coev}_P\otimes \id_X).$$
   \caption{An example computation of the Reshetikhin-Turaev functor $\Pen_3$ of a ribbon graph embedded in a cube.}
    \label{fig:rt-example}
\end{figure}

The chromatic spherical invariant and admissible skein module of a space $M$ are defined in terms of different types of $\mathcal{C}$-colored graphs. The chromatic spherical invariant of $M$ relies on a diagram which decomposes in two pieces: a disjoint union of unoriented circles representing a Heegaard splitting and an $\mathcal{I}$-colored graph on the boundary of $M$.  Meanwhile, an admissible skein module allows colorings from $\cC$ so long as at least one edge/ribbon is colored by an object in $\cI$ in each connected component of $M$. More explicitly, if $M$ is a 2- (or 3-)manifold and $\cC$ is a pivotal (ribbon) category, $\SM(M)$ is defined in terms of $\cC$-colored (ribbon) graphs with at least one edge (ribbon) colored by an object in $\cI$ in each connected component of $M$. Such graphs are called {\it $\mathcal I$-admissible (ribbon) graphs.} When $\cC$ is ribbon and $M$ is a 2-manifold, $\SM(M)\cong \SM(M\times [0,1])$. \\

Let $\cC$ be a pivotal tensor category. By right closure of a $\cC$-colored graph $\Gamma$ in a square $[0,1]^2$ with boundary points labeled by $X_1,\dots, X_n\in\cC$, we mean the $\cC$-colored graph defined by looping the ends of $\Gamma$ as in Figure \ref{fig:closure}. The closure of a $\cC$-colored ribbon graph $\Gamma$ in a cube $[0,1]^3$ is defined similarly. In the semisimple setting, we generally identify the closure with the quantum trace, but again the quantum trace is degenerate in the non-semisimple setting, so we generally do not assign the quantum trace to the closure of $\Gamma$ in this article. The chromatic spherical invariant allows us to assign the modified trace to such a closure, while admissible skein modules do not make any identifications on such a ribbon graph.

\begin{figure}[ht]
  \centering
  \begin{overpic}[abs, unit=1mm, scale = .15]{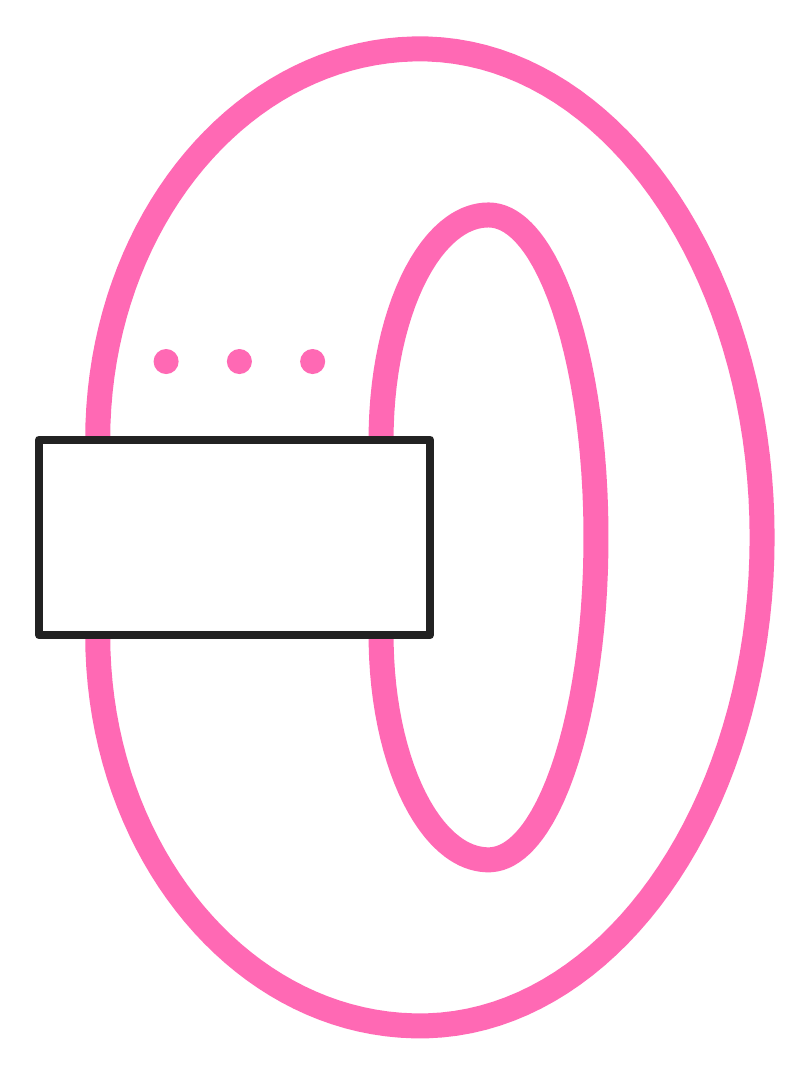}
  \put(5.9,13.7){\makebox(0,0){$\Gamma$}}
  \put(0.2,18.2){\makebox(0,0){\scriptsize{$X_1$}}}
  \put(12.0,18.2){\makebox(0,0){\scriptsize{$X_n$}}}
\end{overpic}
  \caption{The right closure of a $\cC$-colored ribbon graph $\Gamma$.}
  \label{fig:closure}
\end{figure}

\section{The chromatic spherical invariant $\Ku_\cC(M)$}\label{section:kuperbergtype}
To have consistent terminology and notation throughout this article, we use slightly different conventions from \cite{tvunimodular}.

\subsection{Definitions}\label{subsec:chromatic-def}

Let $\cI$ be a tensor ideal of a pivotal tensor category $\cC$ with non-degenerate m-trace $t=\{t_P\}_{P\in\cI}$. 

A \textit{multi-handlebody} is a disjoint union of 3-dimensional handlebodies $H_g$. Let $\Gamma$ be an $\cI$-colored ribbon graph in a multi-handlebody $M$. Let $D^2\subset M$ be a disk with $\partial D^2\subset \partial M$ which intersects $\Gamma$ transversely and nontrivially. We define the local cut operation seen in Figure \ref{fig:cut-operation} whose resulting multi-handlebody and ribbon graph are denoted $\mathrm{cut}_{D^2}(M)$ and $\mathrm{cut}_{D^2}(\Gamma)$ respectively.

\begin{figure}[ht]
  \centering
  $\vcenter{\hbox{\begin{overpic}[abs, unit=1mm, scale = .25]{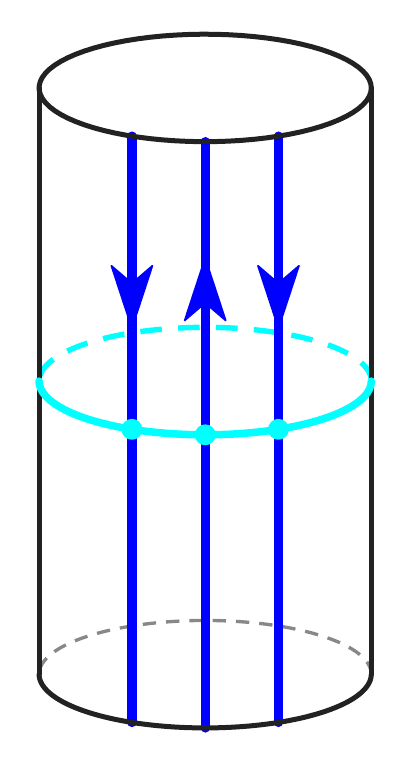}
  \end{overpic} }}
  \;\mapsto\; \sum\limits_i \;
  \vcenter{\hbox{\begin{overpic}[abs, unit=1mm, scale = .25]{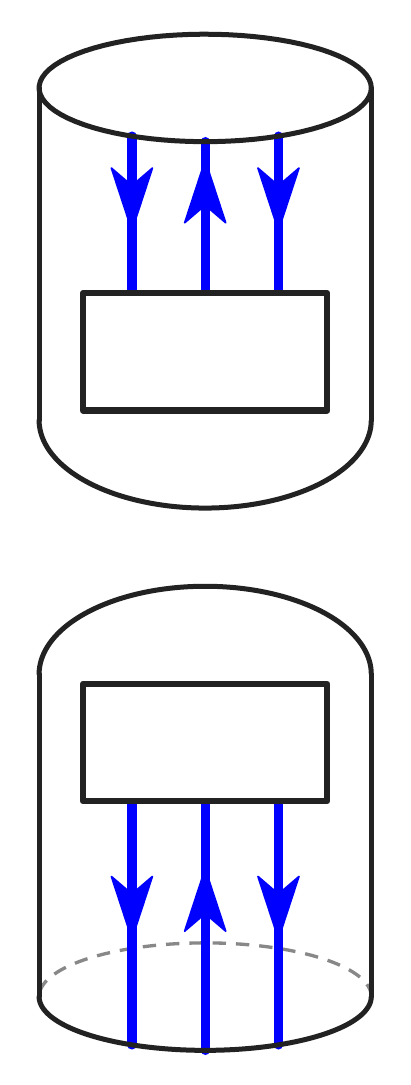}
    \put(8.7,31.0){\makebox(0,0){$x_i$}}
    \put(8.7,14.5){\makebox(0,0){$y_i$}}
  \end{overpic} }}$
  \caption{The local cut operation on $\cI$-colored graphs embedded on the boundary of a multi-handlebody.}
\label{fig:cut-operation}
\end{figure}

\begin{lemma}[\cite{tvunimodular}]
    Let $\cI$ be a tensor ideal of a pivotal tensor category $\cC$ with non-degenerate m-trace $t=\{t_P\}_{P\in\cI}$. Then, there is a unique map $F'$ which assigns a scalar to an $\cI$-colored graph $\Gamma$ in multi-handlebodies $M=\bigsqcup H_g$ with the following properties.
    \begin{enumerate}
        \item $F'(H, \Gamma)$ is invariant under orientation-preserving diffeomorphisms.
        \item In a ball, $F'$ agrees with the modified trace; if $\Gamma$ is an $\cI$-colored graph in a ball $D^3$ and can be written as the closure of an $\cI$-colored graph $\Gamma'$ with nontrivial boundary, then 
        $$F'(D^3, \Gamma) = t(\Pen_2(\Gamma')),$$
        where $\Pen_2$ is the Penrose functor discussed in Subsection \ref{subsection:icolouredgraphs}.
        \item $F'$ turns disjoint unions into products; if $\Gamma_i$ is an $\cI$-colored ribbon graph in $M_i$ for $i=1,2$, then
        $$F'(M_1\sqcup M_2, \Gamma_1\sqcup \Gamma_2) = F'(M_1,\Gamma_1)F'(M_2,\Gamma_2).$$
        \item $F'$ is preserved under local cuts as in Figure \ref{fig:cut-operation}; given a disk $D^2\subset M$ with $\partial D^2\subset \partial M$ which intersects $\Gamma$ transversely and nontrivially
        $$F'(M, \Gamma) = F'(\mathrm{cut}_{D^2}(M),\mathrm{cut}_{D^2}(\Gamma)).$$
    \end{enumerate}
\end{lemma}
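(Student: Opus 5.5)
Uniqueness comes first. Every multi-handlebody $M=\bigsqcup H_g$ can be cut into a disjoint union of balls by cutting along a complete system of meridian disks, $g$ disks for each $H_g$. Before cutting, isotope $\Gamma$ on $\partial M$ so that it meets each meridian disk transversely and nontrivially; if $\Gamma$ misses some meridian, we first add a trivially colored edge or use the admissibility of $\Gamma$ to slide an $\cI$-colored edge across it. Property (4) expresses $F'(M,\Gamma)$ as a finite sum over the dual bases $\{x_i\},\{y_i\}$ of $\Hom(\one,P)$ and $\Hom(P,\one)$, which exist by non-degeneracy of $t$. Each summand is an $\cI$-colored graph in a disjoint union of balls. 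Properties (3) and (2) then force $F'$ to equal a product of modified traces. So $F'$ is determined, and uniqueness follows.

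For existence, I would \emph{define} $F'$ by this recipe: choose a cut system, cut, and evaluate each ball by $t\circ\Pen_2$. Then I would check that the result is independent of all choices.

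\textbf{Step (a): independence of the closure presentation in a ball.} Writing $\Gamma\subset\partial D^3\cong S^2$ as the closure of different $\Gamma'$ gives the same value. This uses cyclicity and the left/right m-trace property of $t$, exactly as in the standard construction of renormalized invariants of graphs in $S^2$: cutting at different $\cI$-colored edges agrees by the ambidextrous property. In particular, property (2) holds for our definition.

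\textbf{Step (b): independence of how $\Gamma$ meets a given disk.} Two cuts of the same disk differ by isotopy. Then $\sum_i x_i\otimes y_i$ is the canonical element of $\Hom(\one,P)\otimes\Hom(P,\one)$ associated to the pairing $t$, and it is natural in $P$. The needed identity is the key lemma: inserting $\sum_i x_i\circ(-)\circ y_i$ and taking traces reproduces $t_P$, namely
\[
  \sum_i t(x_i\circ h\circ y_i\text{-closure})=t(h),
\]
which is just the dual-basis identity. Also, cutting along disks that cross the graph $P\otimes Q$ agrees with cutting $P$ and $Q$ separately. This follows because $t$ is an m-trace: the canonical element for $P\otimes Q$ factors through the partial trace.

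\textbf{Step (c): independence of the cut system.} Any two complete meridian systems of a handlebody are related by isotopies and handle slides (disk slides), together with adding or removing redundant disks, by the classical theorem on disk systems. Adding an extra disk that separates a ball into two balls is consistent by the dual-basis identity of Step (b), read in reverse: it is the "fusion" identity splitting a trace over a separating sphere. A disk slide can be realized as adding the slid disk and then removing the old one, both of which are separating moves in the already-cut manifold. Invariance under diffeomorphisms, property (1), is then automatic, and property (3) holds by construction.

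The main obstacle is Step (c), specifically showing that the separating-disk identity holds when the graph crosses the disk in several strands with arbitrary coupons. I would first bundle all crossing strands into a single coupon colored by their tensor product, which lies in $\cI$ because $\cI$ is an ideal. Then I would apply the single-object dual-basis identity, using non-degeneracy and the m-trace property to justify that the canonical element of the tensor product is compatible with the partial traces.
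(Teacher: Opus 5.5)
The paper does not prove this lemma. It is quoted from \cite{tvunimodular} without argument, so there is no in-paper proof to compare against. Judged on its own, your plan is the standard argument and is sound: uniqueness by cutting along a complete meridian system into balls, and existence by cut-and-evaluate. The existence part rests on three ingredients: the sphere invariant, a fusion identity for a disk separating a ball, and connectivity of complete disk systems under isotopy and disk slides. Several steps do need repair.

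\emph{The key identity.} As displayed it does not typecheck. With $x_i\colon\one\to P$ and $y_i\colon P\to\one$, the expression $x_i\circ h\circ y_i$ only makes sense for $h\in\End(\one)$. What you need is the following. Suppose a circle on the sphere meets strands with total color $P\in\cI$ and splits the graph into $S\in\Hom(\one,P)$ and $T\in\Hom(P,\one)$. Then
\[
t_P(S\circ T)=\sum_i t_P(S\circ y_i)\,t_P(x_i\circ T).
\]
This follows from expanding $S=\sum_i t_P(S\circ y_i)\,x_i$ in the dual basis, and each of the three traces is a sphere invariant by (2). Drop the claim that cutting $P\otimes Q$ agrees with cutting $P$ and $Q$ separately, since $\Hom(\one,P\otimes Q)$ does not split. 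Bundling into the single object $P\otimes Q\in\cI$, as you do at the end, is all that is used.

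\emph{Making a disk meet $\Gamma$.} Adding a trivially colored edge is not allowed, because $\one\notin\cI$ unless $\cI=\cC$. Only a finger move of $\partial D$ across an $\cI$-edge works, and that needs $\Gamma$ to meet every component. On a component with empty graph, properties (1)--(4) do not determine $F'$ at all, so this is an implicit hypothesis of the lemma.

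\emph{Property (4) for arbitrary disks.} More substantively, you prove independence of the chosen complete system but never check (4) for an \emph{arbitrary} disk, which is what the lemma asserts. If $D$ is non-separating, it extends to a complete system, and then (4) is just the definition. If $D$ separates $H_g$ into $H_{g_1}\sqcup H_{g_2}$ (including $g_2=0$, when $\partial D$ bounds a disk in $\partial M$), cut along complete systems for the two sides. This gives a ball in which $D$ is separating, and the fusion identity yields (4).

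\emph{Isotopies and conventions.} The phrase ``related by isotopies'' hides that an isotopy of a disk may sweep $\partial D$ across edges and coupons of $\Gamma$, so the two cut graphs need not be isotopic. The same issue arises for independence from which side receives $x_i$, and from the base point on $\partial D$ used to read off $P$. All of these follow from the fusion identity applied to a disjoint parallel copy $D''$ of $D$, subdividing long isotopies as needed. With the rest of the system fixed, $D''$ is separating in the ball obtained by cutting along $D$, and vice versa. Hence cutting along $D$, along $D''$, or along both gives the same value.
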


Given an object $P\in\cI$, choose bases $x_i^P\in \Hom(\one\to P)$ and $y_i^P\in\Hom(P\to\one)$ which are dual under the pairing induced by $t_P$. Let $\Lambda_P = \sum x_i^{P\otimes P^*}\otimes y_i^{P\otimes P^*}$.

An object $G\in\cI$ is a \textit{generator} if, for any $P\in\cI$, there exist morphisms $f_j:G\to P$ and $g_j:P\to G$, such that 
$$\id_P = \sum_j f_j\circ g_j.$$
Let $G$ be a generator of $\cI$. A \textit{chromatic morphism} is a morphism $\tilde d:G\otimes G\to G\otimes G$ which satisfies the relation described in Figure \ref{fig:chromaticmorph}.

\begin{figure}[ht]
  \centering
  $\vcenter{\hbox{\begin{overpic}[abs, unit=1mm, scale = .3]{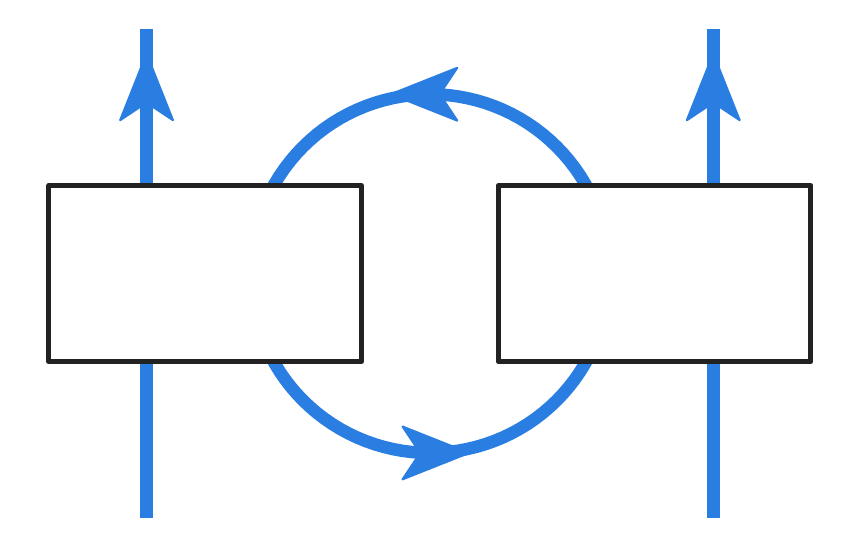}
    \put(10.4,13.9){\makebox(0,0){$\Lambda_G$}}
    \put(33.3,13.9){\makebox(0,0){$\widetilde{d}$}}
    \put(5.2,22.0){\makebox(0,0){\footnotesize{$G$}}}
    \put(5.2,5.0){\makebox(0,0){\footnotesize{$G$}}}
    \put(38.5,22.0){\makebox(0,0){\footnotesize{$G$}}}
    \put(38.5,5.0){\makebox(0,0){\footnotesize{$G$}}}
    \put(21.8,20.0){\makebox(0,0){\footnotesize{$G$}}}
    \put(21.8,7.8){\makebox(0,0){\footnotesize{$G$}}}
  \end{overpic} }}
  \; = \;
  \vcenter{\hbox{\begin{overpic}[abs, unit=1mm, scale = .3]{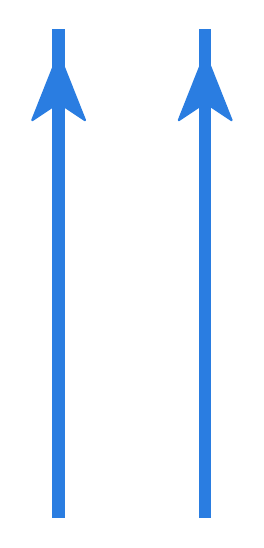}
    \put(5.3,13.9){\makebox(0,0){\footnotesize{$G$}}}
    \put(12.8,13.9){\makebox(0,0){\footnotesize{$G$}}}
  \end{overpic} }}$
  \caption{Defining axiom of the chromatic morphism.}
  \label{fig:chromaticmorph}
\end{figure}

We extend $\tilde d$ to a morphism $\tilde d_P:G\otimes P\to G\otimes P$ on arbitrary objects $P\in\cI$ by 
\[
\widetilde{d}_P \;=\; \sum_j \;
\vcenter{\hbox{\begin{overpic}[abs, unit=1mm, scale = .25]{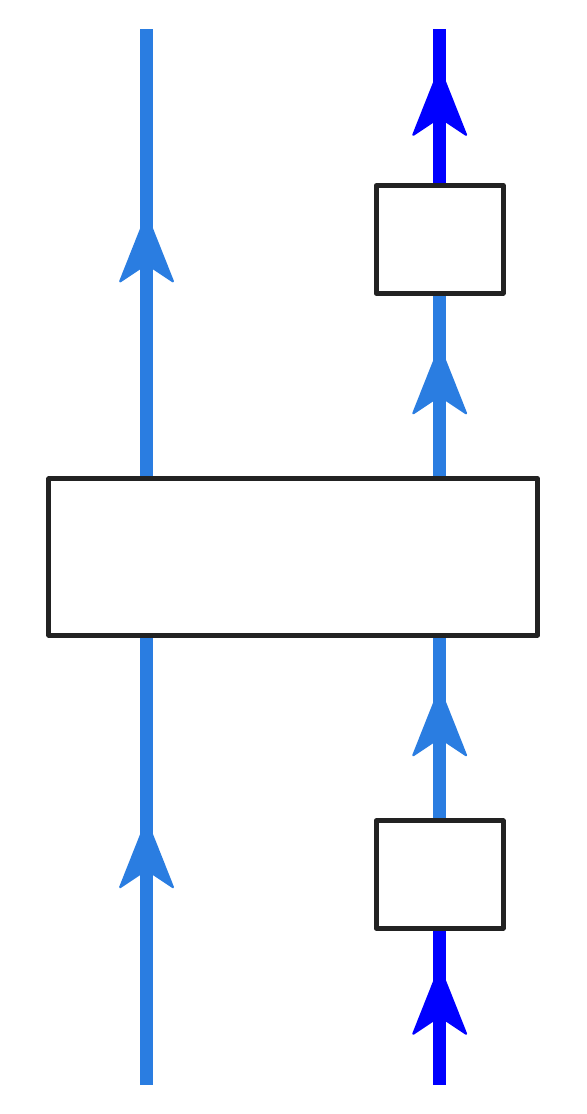}
  \put(12.4,23.6){\makebox(0,0){$\widetilde{d}$}}
  \put(18.6,37.0){\makebox(0,0){\footnotesize{$f_j$}}}
  \put(18.6,10.1){\makebox(0,0){\footnotesize{$g_j$}}}
  \put(8.6,36.4){\makebox(0,0){\footnotesize{$G$}}}
  \put(8.6,10.8){\makebox(0,0){\footnotesize{$G$}}}
  \put(21.0,30.8){\makebox(0,0){\footnotesize{$G$}}}
  \put(21.0,16.3){\makebox(0,0){\footnotesize{$G$}}}
  \put(21.0,42.6){\makebox(0,0){\footnotesize{$P$}}}
  \put(21.0,4.6){\makebox(0,0){\footnotesize{$P$}}}
\end{overpic} }} \;.
\]

\begin{prop}\cite{beliakova2020modifiedtracesymmetrisedintegral} \cite{costantino_skein_2023}\label{prop:hopf-algebra-morphisms}
    Let $H$ be a unimodular, pivotal Hopf algebra. Given $M\in\Rep(H)$, let $\{m_i\}$ be a basis and $\{m^i\}$ be the corresponding dual basis for $M^*$. Then, $H$ is a generator of $\Proj(\Rep(H))$ and we have the following identities:
    \begin{enumerate}[(\alph*)]
        \item the left evaluation $\overleftarrow{\ev}_M:M^*\otimes M\to \bk_\epsilon$ is generated by $f\otimes m\mapsto f(m)$;
        \item the left coevaluation $\overleftarrow{\coev}_M:\bk_\epsilon\to M\otimes M^*$ is given by $1\mapsto \sum m_i\otimes m^i$;
        \item the right evaluation $\overrightarrow{\ev}_M:M\otimes M^*\to \bk_\epsilon$ is generated by $m\otimes f\mapsto f(g\cdot m)$;
        \item the right coevaluation $\overrightarrow{\coev}_M:\bk_\epsilon\to M^*\otimes M$ is given by $1\mapsto \sum m^i\otimes g^{-1}\cdot m_i$;
        \item the right modified trace $t_H:\End_H(H)\to\bk$ is given by $f\mapsto \lambda(gf(1))$;
        \item the chromatic map $\tilde d:H\otimes H\to H\otimes H$ is generated by $x\otimes y\mapsto\lambda(S(y^{(1)})gx) y^{(2)} \otimes y^{(3)}$,
    \end{enumerate}
    where $g$ is the pivot and $\lambda$ is a right cointegral.
\end{prop}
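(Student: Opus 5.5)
We prove the statement by working entirely inside $\Rep(H)$ with the explicit fiber functor to $\Vec$, writing every structure map as a formula on elements, using the formulas for $\overleftarrow{\ev}$, $\overleftarrow{\coev}$, $\overrightarrow{\ev}$, $\overrightarrow{\coev}$ and for the pivot from the Tannaka duality proposition. The first step is that $H$ is a generator of $\Proj(\Rep(H))$. Every projective $P$ is a direct summand of a free module $H^{\oplus n}$, so there are maps $\iota_j:H\to P$ and $\pi_j:P\to H$ with $\sum_j \iota_j\circ\pi_j=\id_P$. This is exactly the generator condition with $f_j=\iota_j$ and $g_j=\pi_j$.

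Items (a)--(d) are direct checks. For each map we verify $H$-linearity, using the antipode axioms and, for (c) and (d), the identity $S^2(x)=gxg^{-1}$ together with $\Delta(g)=g\otimes g$. We then verify the zigzag identities. The left duality is the standard one on $\Vec$ made equivariant through $S$. For the right duality, the pivotal structure $\phi_M(m)=(f\mapsto f(g\cdot m))$ forces the factor $g$ in $\overrightarrow{\ev}$ and the factor $g^{-1}$ in $\overrightarrow{\coev}$. Consequently the right zigzag reduces to $g^{-1}g=1$.

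For (e), any $f\in\End_H(H)$ is right multiplication by $a=f(1)$, so the proposed trace is $t(f)=\lambda(g a)$. The cyclicity condition on $\End_H(H)$ becomes $\lambda(gab)=\lambda(gba)$. This holds because $\lambda(g\,\cdot)$ is a symmetric linear form. Here we use unimodularity of $H$ and the Radford-type identity $\lambda(xy)=\lambda(S^2(y)x)$ for right cointegrals of unimodular Hopf algebras, combined with $S^2=\mathrm{Ad}_g$. The right modified trace property then needs to be checked only for $P=H$, since $H$ generates and the extension to all of $\Proj$ is by the $f_j,g_j$. For $X$ arbitrary, $H\otimes X\cong H^{\oplus\dim X}$ via $h\otimes x\mapsto h^{(1)}\otimes S(h^{(2)})x$. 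Computing $\operatorname{ptr}_r$ with (c) and (d) and comparing with the trace on the free module, the identity reduces to the right-cointegral property $(\lambda\otimes\id)\Delta(y)=\lambda(y)1$, which is where $\lambda$ must be right. Alternatively, this step may simply be cited from \cite{beliakova2020modifiedtracesymmetrisedintegral}.

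For (f), we compute $\Lambda_H$. Using (e), dual bases of $\Hom(\one,H\otimes H^*)\cong H^{*H}$ and of $\Hom(H\otimes H^*,\one)$ are written via the integral $\Lambda$, with normalization $\lambda(g\Lambda)=1$ or its analogue. We then check the axiom of Figure~\ref{fig:chromaticmorph} for the candidate $\tilde d(x\otimes y)=\lambda(S(y^{(1)})gx)\,y^{(2)}\otimes y^{(3)}$. Evaluated on elements, the left-hand side contracts $\Lambda$ against the $\lambda$ factor, and repeated use of the antipode axiom $S(y^{(1)})y^{(2)}=\epsilon(y)$ collapses it to the identity. The step I expect to be the main obstacle is this bookkeeping: fixing the normalization of $\Lambda_H$ against $t_H$ and tracking where the pivot and $S^{\pm1}$ appear under the paper's conventions for right duality. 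If it becomes unwieldy, I would defer to the computation in \cite{costantino_skein_2023}, since these formulas are quoted from there.
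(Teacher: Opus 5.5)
The paper does not prove this proposition; it is quoted from \cite{beliakova2020modifiedtracesymmetrisedintegral} and \cite{costantino_skein_2023}. Your element-level verification is therefore a genuinely different, self-contained route. For the generator claim and (a)--(d) it is fine, up to two small slips. First, $\operatorname{ptr}_r$ is built from $\overleftarrow{\coev}$ and $\overrightarrow{\ev}$, i.e.\ from (b) and (c), not (c) and (d). Second, you need cyclicity across all of $\Proj(\Rep(H))$ before you reduce the partial-trace identity to $P=H$; this follows from cyclicity on $\End_H(H)$ by the usual generator argument.

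In (e) your reduction itself is correct. On $H\otimes X\cong H^{\oplus\dim X}$ the right m-trace identity becomes $\lambda(ga^{(1)})S(a^{(2)})=\lambda(ga)\,g$ for all $a$, i.e.\ $(\lambda\otimes\id)\Delta=\lambda(\cdot)1$. For such $\lambda$, unimodularity gives $\lambda(xy)=\lambda(S^2(y)x)$, so $\lambda(g\,\cdot)$ is symmetric. However, Definition~\ref{hopf-structures} calls this a \emph{left} cointegral. The paper's right cointegral satisfies $(\id\otimes\lambda)\Delta=\lambda(\cdot)1$. For that functional one gets $\lambda(xy)=\lambda(S^{-2}(y)x)$ instead, the symmetric form is $\lambda(g^{-1}\,\cdot)$, and $f\mapsto\lambda(gf(1))$ is not a right m-trace unless $H$ is counimodular. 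So you are proving the statement in the convention of \cite{beliakova2020modifiedtracesymmetrisedintegral}. Say so explicitly, rather than asserting that this is ``where $\lambda$ must be right''.

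Part (f) is not yet a proof, and the sketch misidentifies what makes the axiom hold. The space $\Hom(\one,H\otimes H^*)$ is $\End_H(H)$, of dimension $\dim H$, with elements $x_a(1)=\sum_k e_ka\otimes e^k$. Taking $y_b(m\otimes f)=f(gmb)$, one computes $t_{H\otimes H^*}(x_a\circ y_b)=\lambda(gba)$. Hence $t$-dual bases correspond to $\{a_i\},\{b_i\}$ with $\sum_i a_i\,\lambda(gb_iz)=z$ for all $z$.

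The left side of the axiom is $(\id_H\otimes\overleftarrow{\ev}_H\otimes\id_H)\circ((\sum_i x_{a_i}\circ y_{b_i})\otimes\tilde d)\circ(\id_H\otimes\overrightarrow{\coev}_H\otimes\id_H)$. It sends $u\otimes v$ to $\sum_i\lambda(S(v^{(1)})gu\,b_i)\,v^{(2)}a_i\otimes v^{(3)}$, and then:
\begin{enumerate}
\item the Radford/Nakayama identity from (e) rewrites the scalar as $\lambda(gb_i\,S^{-1}(v^{(1)})u)$;
\item the dual-basis identity collapses the sum to $v^{(2)}S^{-1}(v^{(1)})u\otimes v^{(3)}$;
\item only then does the antipode identity $v^{(2)}S^{-1}(v^{(1)})=\epsilon(v)$ give $u\otimes v$.
\end{enumerate}
So the engine is the Frobenius dual basis together with the same Nakayama identity, which again forces the $(\lambda\otimes\id)\Delta$ convention. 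It is not repeated use of $S(y^{(1)})y^{(2)}=\epsilon(y)$. You should also verify that $\tilde d$ is $H$-linear, which uses $S(z)g=gS^{-1}(z)$.
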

Knowing the right modified trace on a generator $G$ is sufficient to determine its value on arbitrary objects. In particular, for an object $P\in\cI$ we may extend $t_G$ using the $f_j, g_j$ morphisms guaranteed by the definition of a generator: 

\[
t_P \;=\; \sum_j \;
\vcenter{\hbox{\begin{overpic}[abs, unit=1mm, scale = .25]{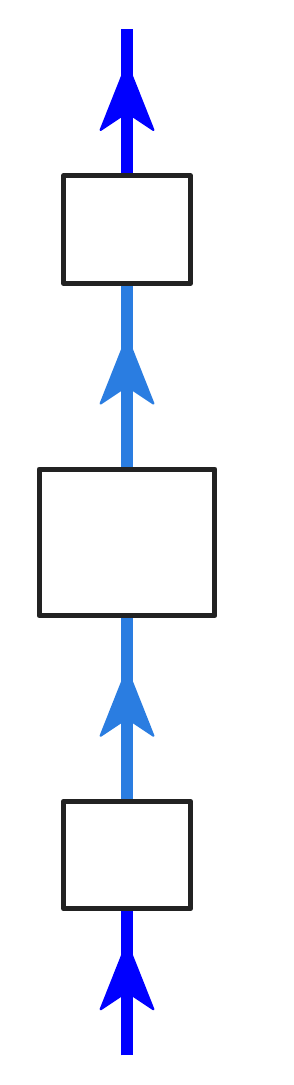}
  \put(5.4,23.0){\makebox(0,0){\footnotesize{$t_G$}}}
  \put(5.4,36.2){\makebox(0,0){\footnotesize{$f_j$}}}
  \put(5.4,9.7){\makebox(0,0){\footnotesize{$g_j$}}}
  \put(7.8,30.0){\makebox(0,0){\footnotesize{$G$}}}
  \put(7.8,15.9){\makebox(0,0){\footnotesize{$G$}}}
  \put(7.8,41.6){\makebox(0,0){\footnotesize{$P$}}}
  \put(7.8,4.3){\makebox(0,0){\footnotesize{$P$}}}
\end{overpic} }} \;.
\]

A pivotal Hopf algebra $H$ is \textit{unibalanced} if the $t$ induced by $t_H$ in Proposition \ref{prop:hopf-algebra-morphisms} is also a left modified trace. Equivalently, a finite-dimensional pivotal Hopf algebra is unibalanced if there are nonzero right and left cointegrals $\lambda,\lambda':H\to\bk$ whose symmetrizations agree: $\lambda(gx) = \lambda'(g^{-1}x)$ for all $x\in H$. In particular, if $H$ is counimodular, this condition becomes $\lambda(g^2x) = \lambda(x)$ for all $x\in H$.

An \textit{admissible bichrome diagram} is a graph on the boundary of a multi-handlebody which is split as a disjoint union of two subgraphs: blue and red with the following properties:
\begin{itemize}
    \item the red subgraph is a disjoint union of unoriented simple closed curves;
    \item the blue subgraph is an $\cI$-colored graph on the multi-handlebody.
\end{itemize}

\begin{lemma}[\cite{tvunimodular}]
    Let $\cC$ be a pivotal tensor category with a non-degenerate m-trace on an ideal $\cI$ and a chromatic morphism $\tilde d$ on a generator $G\in \cI$. Then, $F'$ can be uniquely extended to admissible bichrome handlebody graphs by the following skein relation:
 \[
\vcenter{\hbox{\begin{overpic}[abs, unit=1mm, scale = .25]{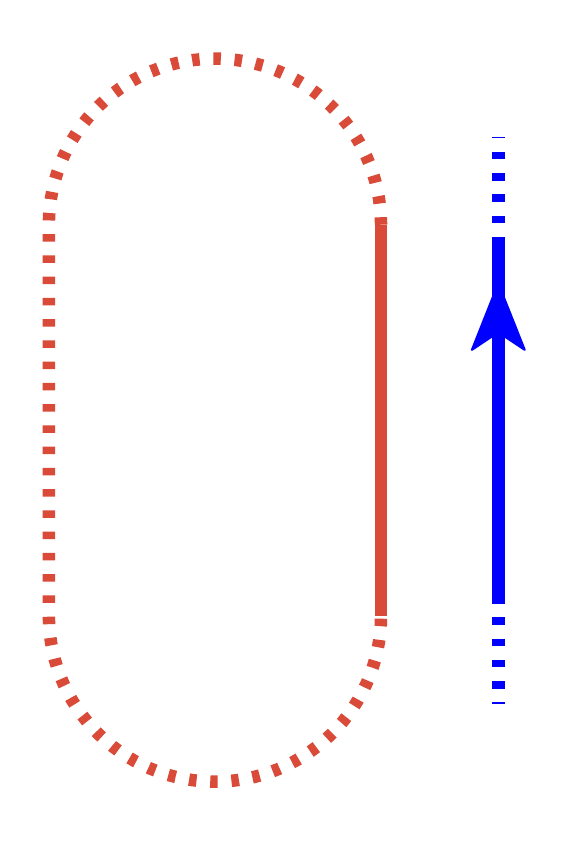}
  \put(23.5,21.9){\makebox(0,0){\footnotesize{$P$}}}
\end{overpic} }}
\;\longrightarrow\;
\vcenter{\hbox{\begin{overpic}[abs, unit=1mm, scale = .25]{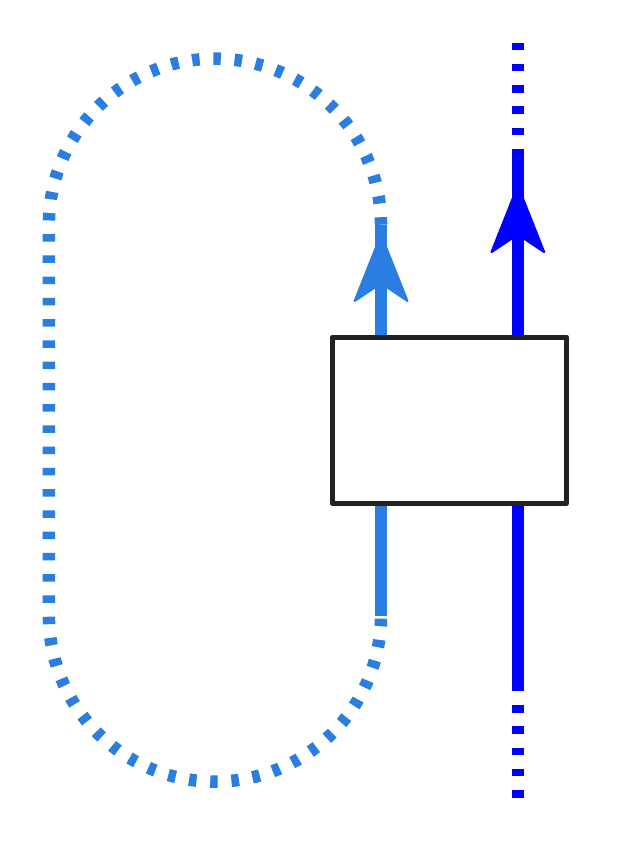}
  \put(19.0,17.8){\makebox(0,0){$\widetilde{d}_P$}}
  \put(13.4,24.0){\makebox(0,0){\footnotesize{$G$}}}
  \put(19.4,26.1){\makebox(0,0){\footnotesize{$P$}}}
\end{overpic} }}
\]
    Again, $F'$ only depends on the orientation-preserving diffeomorphism class.
\end{lemma}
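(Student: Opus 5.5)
The plan is to define the extension directly by the skein relation and then show that the result does not depend on the choices made. Let $(M,\Gamma\sqcup R)$ be an admissible bichrome diagram, with red part $R=R_1\sqcup\dots\sqcup R_n$ a disjoint union of simple closed curves. For each red curve $R_k$ we choose:
\begin{itemize}
\item a point where $R_k$ runs alongside a blue edge colored by some $P\in\cI$, which we may arrange by an isotopy of $R_k$ on $\partial M$ (finger moves across blue edges);
\item an orientation of $R_k$.
\end{itemize}
We then replace $R_k$ by a blue edge colored $G$, fused to the $P$-edge through a coupon $\widetilde{d}_P$ as in the displayed relation. This yields a purely blue $\cI$-colored graph $\Gamma'$, and we set $F'(M,\Gamma\sqcup R):=F'(M,\Gamma')$ using the preceding lemma. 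Uniqueness is then immediate: any extension satisfying the relation must take this value, so the whole content is well-definedness and diffeomorphism invariance.

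For well-definedness, we check independence of each choice separately, handling one red curve at a time, since the replacements are local and commute.
\begin{enumerate}
\item \textbf{The $P$-edge and the point of attachment.} We slide the $\widetilde{d}_P$ coupon along the blue graph. Passing through a coupon $h:P\to Q$ requires the naturality identity $(\id_G\otimes h)\circ\widetilde{d}_P=\widetilde{d}_Q\circ(\id_G\otimes h)$. This should follow from the definition $\widetilde{d}_P=\sum_j(\id\otimes f_j)\widetilde{d}(\id\otimes g_j)$ together with the chromatic axiom (Figure~\ref{fig:chromaticmorph}), which shows that $\widetilde{d}_P$ is independent of the choice of $f_j,g_j$. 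Switching to a tensor factor uses the analogous identity $\widetilde{d}_{P\otimes Q}$ versus $\widetilde{d}_P$ with $Q$ carried along, which holds because $\cI$ is an ideal and $\widetilde{d}$ is built from the generator.
\item \textbf{Isotopy of $R_k$ across blue edges.} This is where the chromatic axiom is really used. Encircling an extra strand is handled by a local cut (property (4)) along a meridian disk: the cut produces $\Lambda$-type sums $\sum x_i\otimes y_i$, and the axiom of Figure~\ref{fig:chromaticmorph} converts these back into identity strands.
\item \textbf{Orientation.} Reversing the orientation should follow from a duality and pivotal symmetry of $\widetilde{d}$.
\end{enumerate}

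Diffeomorphism invariance then follows because a diffeomorphism carries a valid set of choices to a valid set of choices, and $F'$ on blue graphs is already invariant by property (1).

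The main obstacle is step (2), specifically when $R_k$ is isotoped across a handle of $M$ rather than within a disk. I would first reduce to the ball case by cutting along a system of meridian disks, using property (4) to transport the computation into $D^3$. There $F'$ equals $t\circ\Pen_2$ (property (2)), and the required identities become algebraic statements about $t$ and $\widetilde{d}$. These should follow from cyclicity, the m-trace property, and non-degeneracy of $t$ via the dual bases $x_i^P,y_i^P$, following the argument of \cite{tvunimodular}.
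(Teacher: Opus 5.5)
The paper gives no proof of this lemma; it is quoted from \cite{tvunimodular}, so your sketch has to stand on its own. Reducing uniqueness and diffeomorphism invariance to well-definedness is fine. But well-definedness is the entire content of the lemma, and you do not establish it. It says that $F'$ of the resulting blue graph does not depend on which adjacent blue edge the red curve is fused to, on which side of the curve that edge lies, or hence on the orientation of the $G$-loop.

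\textbf{Step (1).} It rests on two morphism-level claims: the naturality $(\id_G\otimes h)\circ\tilde d_P=\tilde d_Q\circ(\id_G\otimes h)$, and the independence of $\tilde d_P$ from the choice of $f_j,g_j$. You attribute both to the chromatic axiom, but that axiom is a single identity obtained after pairing $\tilde d$ with $\Lambda_G$, and it forces neither.

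Both in fact fail for the paper's own Hopf-algebra formula, Proposition~\ref{prop:hopf-algebra-morphisms}(f). Take $H=\cK_2$ and the module map $r_K\colon y\mapsto yK$. Since $r_K\circ r_K=\id_H$, the choice $f_1=g_1=r_K$ is legitimate. Then $(\id\otimes r_K)\circ\tilde d\circ(\id\otimes r_K)(\xi_1\xi_2\otimes 1)=\lambda(S(K)K\xi_1\xi_2)\,K\otimes K^2=K\otimes 1$. On the other hand, $\tilde d(\xi_1\xi_2\otimes 1)=\lambda(K\xi_1\xi_2)\,1\otimes 1=0$.

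So $\tilde d_P$ is not well defined as a morphism, and its coupon cannot be slid through other coupons. Invariance can only hold after the $G$-strand has been closed into the loop, i.e.\ at the level of $F'$. In the Hopf model this is visible: the twisted cyclicity of $\lambda$ makes the closed loop act on the $P$-strand by a scalar, for any choice of $f_j,g_j$.

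Even granting naturality, sliding the coupon along the blue graph never relates edges in different components. Nor does it relate a fusion on the left of the $G$-loop to one on its right.

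\textbf{Step (3).} It invokes a ``pivotal symmetry'' of $\tilde d$ that is neither assumed nor derived. Since $\tilde d_P$ acts on $G\otimes P$ with $G$ on the left, the loop's orientation is tied to the side on which the $P$-edge lies. So this is the same left/right problem again.

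\textbf{Step (2).} It aims at the wrong statement and uses a tool that is not available. In a bichrome diagram red and blue are disjoint, and the lemma never moves a red curve across a blue edge. Invariance under such moves is a separate statement, so you cannot use it inside your definition to bring a red curve next to a chosen edge.

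The proposed mechanism, a cut along a meridian disk bounded by the red curve, also does not exist in general. A red curve typically bounds no disk in $M$; for example, the $(p,q)$-curve on the solid torus does not for $p\ge 2$. So property (4) cannot be applied to it.

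Your final paragraph points the right way: cut $M$ into balls and compare $t\circ\Pen_2$. But after cutting, the $G$-loop meets the cut disks many times, alongside other strands. The axiom of Figure~\ref{fig:chromaticmorph} controls only one dual-basis pair $\Lambda_G$, met once by the $\tilde d$-loop.

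The missing ingredient is an actual identity, derived from that axiom together with cyclicity and the two-sided m-trace property. It must show that moving the $\tilde d$-coupon from one strand adjacent to the closed $G$-loop to another, on either side, leaves the value unchanged. Deferring this to \cite{tvunimodular} leaves the proof without its key step.
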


Every closed $3$-manifold $M$ admits a (non-unique) Heegaard splitting
$M \cong M_{\alpha} \cup_{\Sigma} M_{\beta}$, where $M_{\alpha}$ and $M_{\beta}$
are diffeomorphic multi-handlebodies of the same genus glued along their common boundary Heegaard surface
$\Sigma = \partial M_{\alpha} = \partial M_{\beta}$. The gluing is encoded by a
Heegaard diagram $(\Sigma, \{\alpha_i\}, \{\beta_i\})$, in which
$\{\alpha_i\}$ and $\{\beta_i\}$ are minimal reducing sets of unoriented
simple closed curves on $\Sigma$ for $M_{\alpha}$ and $M_{\beta}$, respectively.

Given a tensor ideal $\cI$ in a pivotal tensor category $\cC$ with a non-degenerate modified trace $t$ and generator $G$ with chromatic morphism $\tilde d$, we may define an admissible bichrome diagram $\Gamma(\alpha, \beta)$ from a Heegaard splitting as follows: color the $\{\beta_i\}$ red inside $M_\alpha$, choose a morphism $\eta:G\to G$ such that $t_G(\eta) = 1$, and add a blue-colored right closure of $\eta$ in any empty space on the boundary of each component of $M_\alpha$. This construction allows us to define a chromatic spherical invariant of $M$ as exhibited in the following theorem.

\begin{theorem}[\cite{tvunimodular}]
    Let $\cI$ be a tensor ideal in a pivotal tensor category $\cC$ with a non-degenerate modified trace $t$ and generator $G$ with chromatic morphism $\tilde d$. Let $M$ be a closed 3-manifold with Heegaard splitting $M_{\alpha}\cup_\Sigma M_{\beta}$. Define 
    $$\Ku_\cC(M) = F'(M_{\alpha}, \Gamma(\alpha, \beta)).$$
    Then, $\Ku_\cC(M)$ is independent of the choice of Heegaard splitting and placement of $\eta$.
\end{theorem}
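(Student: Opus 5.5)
The plan is to reduce the statement to the Reidemeister--Singer theorem. Any two Heegaard splittings of a closed $3$-manifold $M$ become related, after an orientation-preserving diffeomorphism, by a finite sequence of three moves: isotopies of the curves $\{\beta_i\}$ on $\Sigma$, handle slides of one $\beta$-curve over another, and stabilizations/destabilizations. Invariance under diffeomorphisms and isotopies is immediate, since by the two lemmas of \cite{tvunimodular} recalled above $F'$ depends only on the orientation-preserving diffeomorphism class of the pair $(M_\alpha,\Gamma(\alpha,\beta))$. So it suffices to prove three things: independence of the placement of $\eta$, invariance under handle slides, and invariance under stabilization.

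\emph{Placement of $\eta$.} The blue closure of $\eta$ sits in a disk $D\subset\partial M_\alpha$ disjoint from the red curves. First I would show that any two such placements on the same component are related by the following two operations.
\begin{enumerate}[(i)]
\item Isotopy in the complement of the red curves.
\item Pushing the closure of $\eta$ across a red curve.
\end{enumerate}
For (ii), I apply the bichrome skein relation to the red curve. This converts it into a blue $G$-strand carrying $\tilde d_P$, after which moving the small closed blue loop across it is a blue isotopy. One must then check that the resulting value is unchanged. This uses the naturality of $\tilde d_P$ in $P$, which follows from its construction via the generator maps $f_j,g_j$. It also uses the identity $t_P=\sum_j t_G(g_j\circ-\circ f_j)$, so that a small closed component evaluates to the scalar $t_G(\eta)=1$ times the rest. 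That last point is seen by cutting along a disk separating the loop, using property (4) and the m-trace in a ball, property (2).

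\emph{Stabilization.} A stabilization adds a handle to $M_\alpha$ together with a new red curve $\beta_{g+1}$ meeting the new meridian disk $D$ exactly once. I cut along $D$ using property (4). The red curve, after the skein relation, crosses $D$ in a $G\otimes(\text{stuff})$ strand, and the cut inserts $\sum_i x_i\otimes y_i$. The result is precisely the left side of the defining axiom of Figure~\ref{fig:chromaticmorph}, with $\Lambda_G$ contracted against $\tilde d$. That axiom replaces it by the identity, which returns the diagram of the unstabilized splitting. Hence $F'$ is unchanged.

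\emph{Handle slides (expected main obstacle).} Sliding $\beta_1$ over $\beta_2$ replaces $\beta_1$ by a band sum $\beta_1\#_b\beta_2$. The strategy is to prove a local ``transparency'' lemma: a blue (or red) strand running parallel to a red curve $\beta_2$ can be slid over $\beta_2$ without changing $F'$. I would prove this in the following steps.
\begin{enumerate}[(i)]
\item Expand $\beta_2$ by the skein relation into a blue $G$-strand decorated by $\tilde d_P$.
\item Cut along a meridian disk of the handle dual to $\beta_2$, inserting $\Lambda_G$.
\item Use the chromatic axiom together with the naturality of $\tilde d_P$ in $P$ to absorb the extra parallel strand, via the maps $G\to P$ and $P\to G$.
\end{enumerate}
The delicate points are the bookkeeping of orientations and of left versus right duals. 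I would handle them with the explicit pivotal conventions of Proposition~\ref{prop:hopf-algebra-morphisms} and the fact that $t$ is both a left and a right m-trace. Step (iii) is where the defining property of $\tilde d$ and the m-trace condition must really interact. I would first try to reduce it, by cutting, to an identity in a ball involving only $\Lambda_G$ and $\tilde d$.
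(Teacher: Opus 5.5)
The paper gives no proof of this statement; it is imported from \cite{tvunimodular}, so I assess your proposal on its own terms. Your frame is the right one. The $\alpha$-curves never enter $\Gamma(\alpha,\beta)$, so Reidemeister--Singer reduces everything to diffeomorphisms, moves of the $\beta$-system, and stabilization. However, the handle-slide step carries essentially all the content, you have not proved it, and two of its ingredients fail.

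\textbf{Handle slides.} Step (ii) needs a meridian disk of $M_\alpha$ meeting $\beta_2$ exactly once. Together with the disk that $\beta_2$ bounds in $M_\beta$, such a disk would make the splitting stabilized. So for a general (e.g.\ minimal-genus) diagram it does not exist. Cutting along a disk that $\beta_2$ meets several times inserts dual bases of $\Hom(\one\to X)$ for a larger tensor product $X$, not $\Lambda_G$, so the axiom of Figure~\ref{fig:chromaticmorph} does not apply.

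Step (iii) uses naturality of $\tilde d_P$ in $P$, and this is false at the level of morphisms. Take $H=\bk[\bZ/2]$ with $g=1$, $\lambda(1)=1$, $\lambda(\sigma)=0$, and let $r_\sigma$ be right multiplication. Proposition~\ref{prop:hopf-algebra-morphisms} gives $\tilde d(\sigma\otimes\sigma)=\sigma\otimes\sigma$ but $(\id\otimes r_\sigma)\tilde d(\sigma\otimes 1)=0$. So $\tilde d_P$ even depends on the choice of $f_j,g_j$. Only the $F'$-level statement (a coupon may be moved past the fusion point) holds, and it comes from the uniqueness clause of the skein-relation lemma, not from the construction.

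One way to close the gap:
\begin{enumerate}[(a)]
\item Prove a cancellation lemma. Suppose a meridian disk meets one red curve once and other strands of total colour $V$ (so $V\in\cI$ or $V=\one$). After fusing and cutting one should get $\sum_j(a_j\otimes\id_P)\tilde d_P(b_j\otimes\id_P)=\id_{V\otimes P}$ for $t$-dual bases $a_j\in\Hom(G\to V)$, $b_j\in\Hom(V\to G)$. These are identified with the cut's bases via pivotality and the m-trace property. For $V\in\cI$, expand the canonical element through $\id_V=\sum_k f_k g_k$ and apply the axiom. For $V=\one$, precompose the axiom with $x\otimes\id_P$ for some $0\neq x\in\Hom(\one\to G)$, expand the resulting elements of $\Hom(\one\to G)$ in the dual basis, and cancel the monomorphism $x\otimes\id_P$. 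This gives stabilization and slides over \emph{primitive} red curves, since both sides reduce to the same rerouted graph after the cut.
\item For arbitrary $\beta_2$: stabilize beside it with a red longitude $\beta_h$, and slide $\beta_2$ over $\beta_h$ so that it meets the new meridian disk once. Perform the required slide over this primitive curve, remove the extra $\beta_h$-band by one more slide, slide $\beta_2$ back, and destabilize.
\end{enumerate}

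\textbf{Smaller points.} Your step (ii) for $\eta$ is both invalid and unnecessary. Property (4) only allows cuts along disks meeting $\Gamma$ nontrivially, so the $\eta$-loop cannot be cut off. The claimed factorization is also false in the non-semisimple setting: already in a ball, the m-trace property makes a projective loop split off from an admissible graph contribute its categorical trace, which vanishes on projectives unless $\cC$ is semisimple. Instead, note that $\{\beta_i\}$ is a complete meridian system of $M_\beta$. Hence $\Sigma\setminus\bigcup\beta_i$ is a $2g$-holed sphere, which is connected, and any two placements are isotopic off the red curves.

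For stabilization, only the single $G$-curve crosses the new meridian disk. So the cut inserts dual bases $x_i\in\Hom(\one\to G)$, $y_i\in\Hom(G\to\one)$, not $\Lambda_G$. The identity you need, $\sum_i(y_i\otimes\id_G)\tilde d(x_i\otimes\id_G)=\id_G$, is the $V=\one$ case of (a), not literally the axiom. You must also first move the $\eta$-loop next to the new red curve in order to apply the skein relation.
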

Note that the chromatic spherical invariant $\Ku_\cC$ does depend on $\cI$, but we use the notation of \cite{tvunimodular}.
 In the case where both the chromatic spherical invariant and the Kuperberg invariant are defined, they agree \cite{tvunimodular}. That is, if $H$ is an involutive ($S^2 = \id_H$), unimodular, unibalanced Hopf algebra, then the chromatic spherical invariant for the ideal $\Proj(\Rep(H))$ in $\Rep(H)$ agrees with the Kuperberg invariant on any closed 3-manifold $M$: 
    $$\Ku_{\Rep(H)}(M) = \mathrm{Ku}_H(M).$$
    
\subsection{Computation for lens spaces} \label{subsec:chromatic-lens}
In this section, we compute $\Ku_\cC$ when $\cC=\Rep(H)$ for some unimodular, unibalanced Hopf algebra $H$ and $\cI=\Proj(\Rep(H))$ is the projective ideal. Note that we do not assume that $H$ is involutive, so the Kuperberg invariant is not defined. The lens space $L(p,q)$ has the following genus-one Heegaard splitting: both
$M_{\alpha}$ and $M_{\beta}$ are solid tori, the set $\{\alpha_i\} = \{\alpha_1\}$
consists of a single meridian, and $\{\beta_i\} = \{\beta_1\}$ consists of a
single $(p,q)$-curve on the boundary torus $\Sigma$. The admissible bichrome diagram associated to $L(p,q)$ is shown in Figure \ref{fig:bichrome}.

\begin{figure}[ht]
  \centering
  \begin{overpic}[abs, unit=1mm, scale = .25]{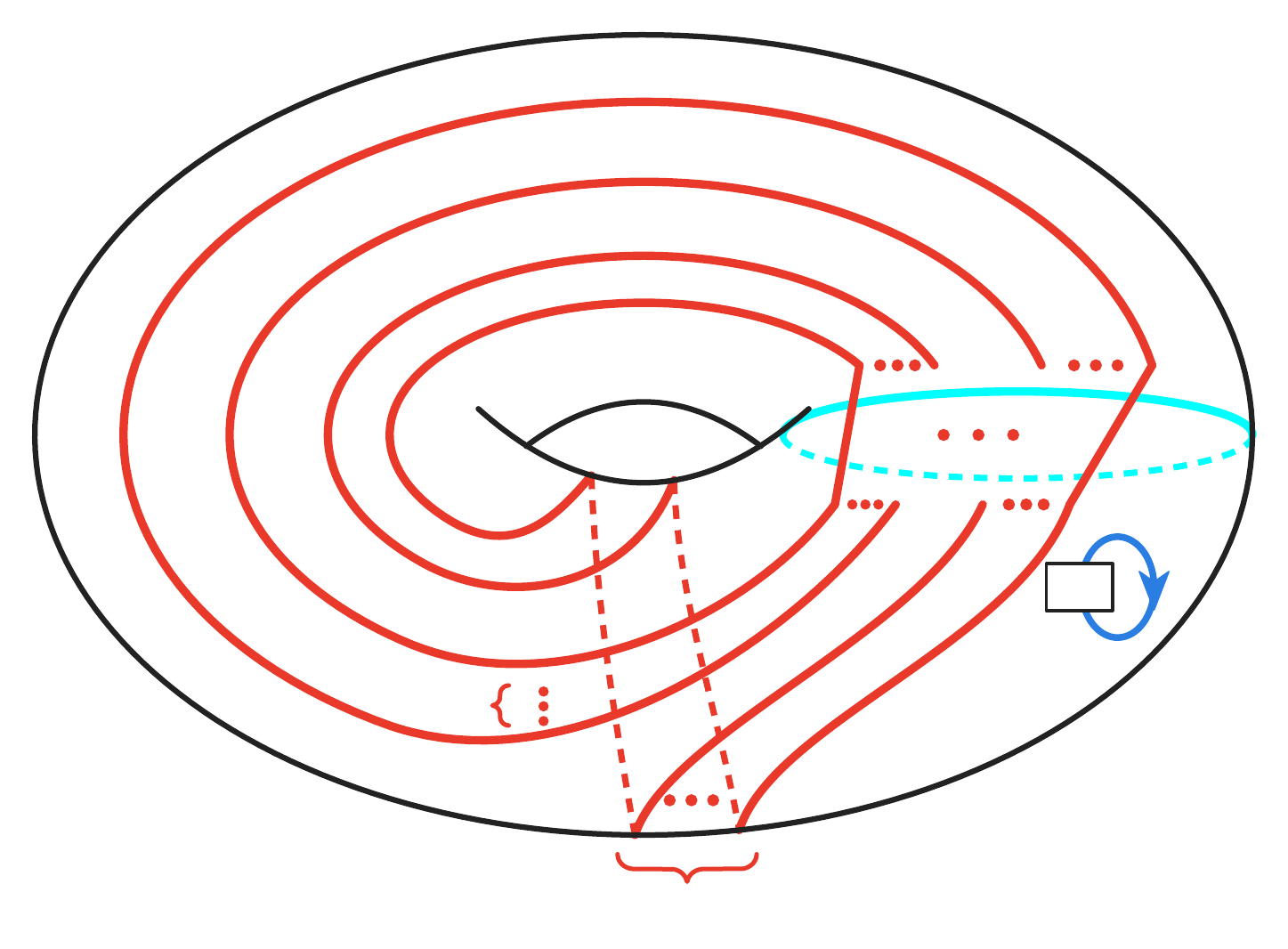}
    \put(19.8,12.2){\makebox(0,0){\scriptsize{$p-q$}}}
    \put(32.7,1.7){\makebox(0,0){\footnotesize{$q$}}}
    \put(51.3,17.2){\makebox(0,0){\scriptsize{$\eta$}}}
    \put(55.5,20.2){\makebox(0,0){\footnotesize{$G$}}}
  \end{overpic}
  \caption{The bichrome diagram for the lens space $L(p,q)$.} 
  \label{fig:bichrome}
\end{figure}

\begin{theorem}\label{thm:chromatic-lens}
    Let $p\geq q\geq 1$ be coprime positive integers, and $H$ be a finite-dimensional unibalanced, unimodular Hopf algebra. Given $n\in\{1,\dots, q-1,q+1,\dots, p-1\}$, let $\bar n = n + p\bZ$, and define the notation $\Lambda^{(\bar n)} = \Lambda^{(n)}$ and $i_{\bar n} = 0$ if $n < q$ and $i_{\bar n} = 1$ if $n > q$. The chromatic spherical invariant of $L(p, q)$ in $\Rep(H)$ is given by
    $$\Ku_{\Rep(H)}(L(p,q)) = \lambda(\Lambda^{(q)}g^{i_{\overline {2q}}}\Lambda^{(\overline{2q})}g^{i_{\overline {3q}}}\Lambda^{(\overline{3q})}g^{i_{\overline {4q}}}
\dots g^{i_{\overline{(p-1)q}}}\Lambda^{(\overline{(p-1)q})}g\Lambda^{(p)}),$$
    where $g$ is the pivot and $(\Lambda, \lambda)$ are a normalized ($\lambda(\Lambda)=1$) integral/cointegral pair.
\end{theorem}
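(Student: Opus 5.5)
We evaluate $F'$ on the genus-one bichrome diagram of Figure~\ref{fig:bichrome}, using the Hopf-algebraic formulas of Proposition~\ref{prop:hopf-algebra-morphisms} with generator $G=H$. First, place the blue closure of $\eta$ next to the red $(p,q)$-curve, with $\eta\in\End_H(H)$ normalized so that $t_H(\eta)=\lambda(g\eta(1))=1$; right multiplication by $\Lambda$ composed with a suitable scalar works, since $\lambda(g\Lambda)$ is a nonzero multiple of $\lambda(\Lambda)=1$ after using unimodularity. Next, apply the skein relation of the extension lemma: slide the blue $G$-strand into the red curve and replace the red curve by a blue $H$-colored $(p,q)$-curve carrying a coupon $\tilde d_H$. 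Finally, cut the solid torus $M_\alpha$ along its meridian disk, using the local cut property (4) of $F'$. The red curve crosses the meridian disk $p$ times, so the cut inserts $\sum_i x_i\otimes y_i$ for the object $H^{\otimes p}$ (or, more simply, a single $H$ strand via the generator property). This leaves a ball containing a planar graph, evaluated by the modified trace via property (2).

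To compute in the ball, we identify morphisms $H\to H$ of left modules with right multiplication by elements. The dual bases $x_i\in\Hom(\one,H)$ and $y_i\in\Hom(H,\one)$ under $t_H$ are $x=\Lambda$ and $y=\lambda(g\,\cdot)$ up to normalization. The cut of the $p$ parallel strands then produces $\Delta^{(p-1)}(\Lambda)=\Lambda^{(1)}\otimes\dots\otimes\Lambda^{(p)}$ distributed along the strands. Following the $(p,q)$-curve around the torus, the strand that enters at position $n$ exits at position $n+q \bmod p$. Reading the resulting single closed curve gives a product of the legs $\Lambda^{(q)},\Lambda^{(\overline{2q})},\dots,\Lambda^{(\overline{(p-1)q})},\Lambda^{(p)}$ in that cyclic order. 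Each time the curve wraps past the position where the diagram's cap and cup live (that is, when $\overline{kq}$ jumps past $q$), a right evaluation/coevaluation pair is used instead of a left one. By Proposition~\ref{prop:hopf-algebra-morphisms}(c),(d) this contributes a factor $g$ or $g^{-1}$; left evaluations contribute nothing. This bookkeeping produces exactly the exponents $i_{\overline{kq}}\in\{0,1\}$ and the final $g$ before $\Lambda^{(p)}$.

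The chromatic map $\tilde d$ of part (f), together with the $\eta$ loop, contracts to a single application of $\lambda$ on the resulting product. Its defining relation (Figure~\ref{fig:chromaticmorph}) and $\lambda(\Lambda)=1$ absorb the extra $\lambda(S(y^{(1)})gx)$ factor. Throughout, we use unibalancedness to move $g$'s between the left and right traces, so that the answer is a right trace $\lambda(\cdots)$. We use unimodularity ($\Lambda$ two-sided, with $\Delta(\Lambda)$ cocommutative up to $S^2$-twisting) to cyclically reorder legs where needed.

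The main obstacle is this middle step: tracking precisely which cups and caps are left versus right duality morphisms as the $(p,q)$-curve winds, and confirming that the resulting power of $g$ in front of $\Lambda^{(\overline{kq})}$ is $g^{i_{\overline{kq}}}$ and not $g^{-1}$ or a shifted index. I would first verify the pattern in the cases $L(p,1)$ and $L(3,2)$ by direct diagrammatic computation, and then do the general case by induction on the number of wraps.
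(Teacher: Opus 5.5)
The topological reduction in your proposal is the same as the paper's. You replace the red $(p,q)$-curve by a blue $H$-curve with a $\tilde d$ coupon, cut $M_\alpha$ along the meridian disk, and evaluate in the ball with $t_H(f)=\lambda(gf(1))$ and $\eta=-\cdot\Lambda$. No rescaling of $\eta$ is needed, since $g\Lambda=\Lambda$ already gives $t_H(\eta)=\lambda(g\Lambda)=\lambda(\Lambda)=1$.

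The gap is that the proposal stops where the proof begins. You never evaluate the resulting planar morphism, which you yourself call the main obstacle. ``Check $L(p,1)$ and $L(3,2)$, then induct on the number of wraps'' is not an argument, since you give no recursion relating the diagrams for different $(p,q)$; the paper needs none.

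The ingredients you propose for this step are also wrong. $\Hom_H(H,\bk_\epsilon)=\bk\,\epsilon$, so the functional dual to $1\mapsto\Lambda$ is $\epsilon$, not $\lambda(g\,\cdot)$, which is not $H$-linear. For $p\geq 2$ the cut does not insert a single $\Delta^{p-1}(\Lambda)$, because $\Hom(\bk_\epsilon,H^{\otimes p})$ has dimension $(\dim H)^{p-1}$. Fusing to one $H$-strand via the generator maps only reintroduces the same sum. The paper uses the basis $x_{\bom,\bom'}:1\mapsto\Delta^{p-1}(\Lambda)(\bom\otimes1\otimes\bom')$, built from the free $H$-basis $\Omega^{q-1}\otimes1\otimes\Omega^{p-q}$ of $H^{\otimes p}$. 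Its dual basis satisfies $y_{\bom,\bom'}=(\bom^*\otimes\lambda\otimes(\bom')^*)(\Delta^{p-1}(\Lambda)\cdot-)$. That identity is the source of the outer $\lambda$ in the final formula. The sums over $\bom,\bom'$ then collapse into one product of legs of $\Lambda$ via $\sum_b b^*(v)b=v$, applied along the single $p$-cycle $n\mapsto n+q$ of $\bZ_p$; this is where $\gcd(p,q)=1$ is used. ``Reading the single closed curve'' presupposes this collapse without providing it.

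Your account of the powers of $g$ and of $\tilde d$ also does not match what happens. The planar reduction \eqref{kuperberg-start} has one right coevaluation $\overrightarrow{\coev}_{H^{\otimes q}}$, contributing $\Delta^{q-1}(g^{-1})$ on the whole bundle of $q$ strands, and one left evaluation. There is not one duality pair per wrap. Further $g$'s enter through $t_H$ and through $\tilde d(x\otimes y)=\lambda(S(y^{(1)})gx)\,y^{(2)}\otimes y^{(3)}$.

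The chromatic relation of Figure~\ref{fig:chromaticmorph} plays no role. The paper applies formula (f) of Proposition~\ref{prop:hopf-algebra-morphisms} directly. It then uses the factor $\lambda(g\Lambda^{(3)})$ from $t_H$ and the scalar produced by $\tilde d$ to collapse the coproduct of the $\eta$-integral and the leg of the $x$-integral that the evaluation feeds into $\tilde d$. This uses $g\Lambda=\Lambda$ and the cointegral identity for $\lambda(\Lambda^{(2)})\Lambda^{(1)}\otimes\Lambda^{(3)}$ established there. Only then does the trace become $y_{\bom,\bom'}[\Delta^{p-q-1}(g)\bom'\otimes\bom\otimes 1]$, with a $g$ on exactly the $p-q$ slots of the $\bom'$-block. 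That is what the exponents $i_{\bar n}$ (equal to $1$ iff $n>q$) and the final $g$ before $\Lambda^{(p)}$ record.

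Since $y_{\bom,\bom'}$ is $H$-linear and $\epsilon(g)=1$, a $g$ on one block of arguments is interchangeable with a $g^{-1}$ on the complementary block. So the question you leave open ($g$ versus $g^{-1}$, and on which strands) can only be settled by tracking all three sources of $g$ explicitly. That tracking is the content of the paper's computation. Finally, unibalancedness is not used to move $g$'s during the computation. It is the hypothesis guaranteeing that $t$ is a two-sided m-trace, so that $\Ku_{\Rep(H)}$ is defined at all.
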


While this expression may seem complicated as written, actually writing out this expression for specific $p, q$ is not difficult. For example, the chromatic spherical invariant of the space $L(p, 1)$ simplifies to
$$\Ku_{\Rep(H)}(L(p,1)) = \lambda(\Lambda^{(1)}g\Lambda^{(2)}g\Lambda^{(3)}\dots
\dots g\Lambda^{(p)}) = \lambda(g^{-1}\Lambda^{(1)}\Lambda^{(2)}\Lambda^{(3)}\dots
\dots \Lambda^{(p)}).$$
The chromatic spherical invariant of the space $L(p, p-1)$ simplifies to
\begin{align*}
    \Ku_{\Rep(H)}(L(p,p-1)) &= \lambda(\Lambda^{(p-1)}\Lambda^{(p-2)}\Lambda^{(p-3)}\dots
\dots \Lambda^{(1)}g\Lambda^{(p)})\\
    &=\lambda(g\Lambda^{(p-1)}\Lambda^{(p-2)}\Lambda^{(p-3)}\dots
\dots \Lambda^{(1)}S^2(\Lambda^{(p)})).
\end{align*}
As a more complex example, the chromatic spherical invariant of the space $L(7, 4)$ is given by 
\begin{align*}
    \Ku_{\Rep(H)}(L(7,4)) &= \lambda(\Lambda^{(4)}\Lambda^{(1)}g\Lambda^{(5)}\Lambda^{(2)}g\Lambda^{(6)}\Lambda^{(3)}g\Lambda^{(7)}) \\
    &= \lambda(g^3\Lambda^{(4)}\Lambda^{(1)}S^2(\Lambda^{(5)})S^2(\Lambda^{(2)})S^4(\Lambda^{(6)})S^4(\Lambda^{(3)})S^{6}(\Lambda^{(7)})).
\end{align*}
\begin{proof}
    First, we manipulate the bichrome diagram to express the chromatic spherical invariant in terms of a generator $G$ of an ideal $\cI$ of a unimodular pivotal finite tensor category $\cC$. Let $\{x_i\}\subset\Hom(\one\to G^{\otimes p})$ and $\{y_i\}\subset\Hom(G^{\otimes p}\to\one)$ be a pair of dual bases under the inner product defined by modified trace map. Let $\tilde d:G\otimes G\to G\otimes G$ denote a chromatic morphism and $\eta:G\to G$ any morphism with modified trace $t_G(\eta) = 1$.  We apply the properties of $F'$ to express the $\Ku_\cC$ in terms of a modified trace.

\[
  \centering
  \newcommand{\lb}[1]{\makebox(0,0){\scalebox{0.72}{\tiny{$#1$}}}}
  \newcommand{\lbs}[1]{\makebox(0,0){\scalebox{0.55}{\tiny{$#1$}}}}
  \begin{aligned}
    \Ku_\cC(L(p,q))&= F'\left( \vcenter{\hbox{\begin{overpic}[abs, unit=1mm, scale = .17]{lens_bichrome_diagram.pdf}
      \put(13.8,8.3){\lb{p-q}}
      \put(22.2,1.0){\lb{q}}
      \put(34.9,11.7){\lb{\eta}}
      \put(38.0,13.9){\lb{G}}
    \end{overpic}}} \right)
    = F'\left( \sum_i \vcenter{\hbox{\begin{overpic}[abs, unit=1mm, scale = .17]{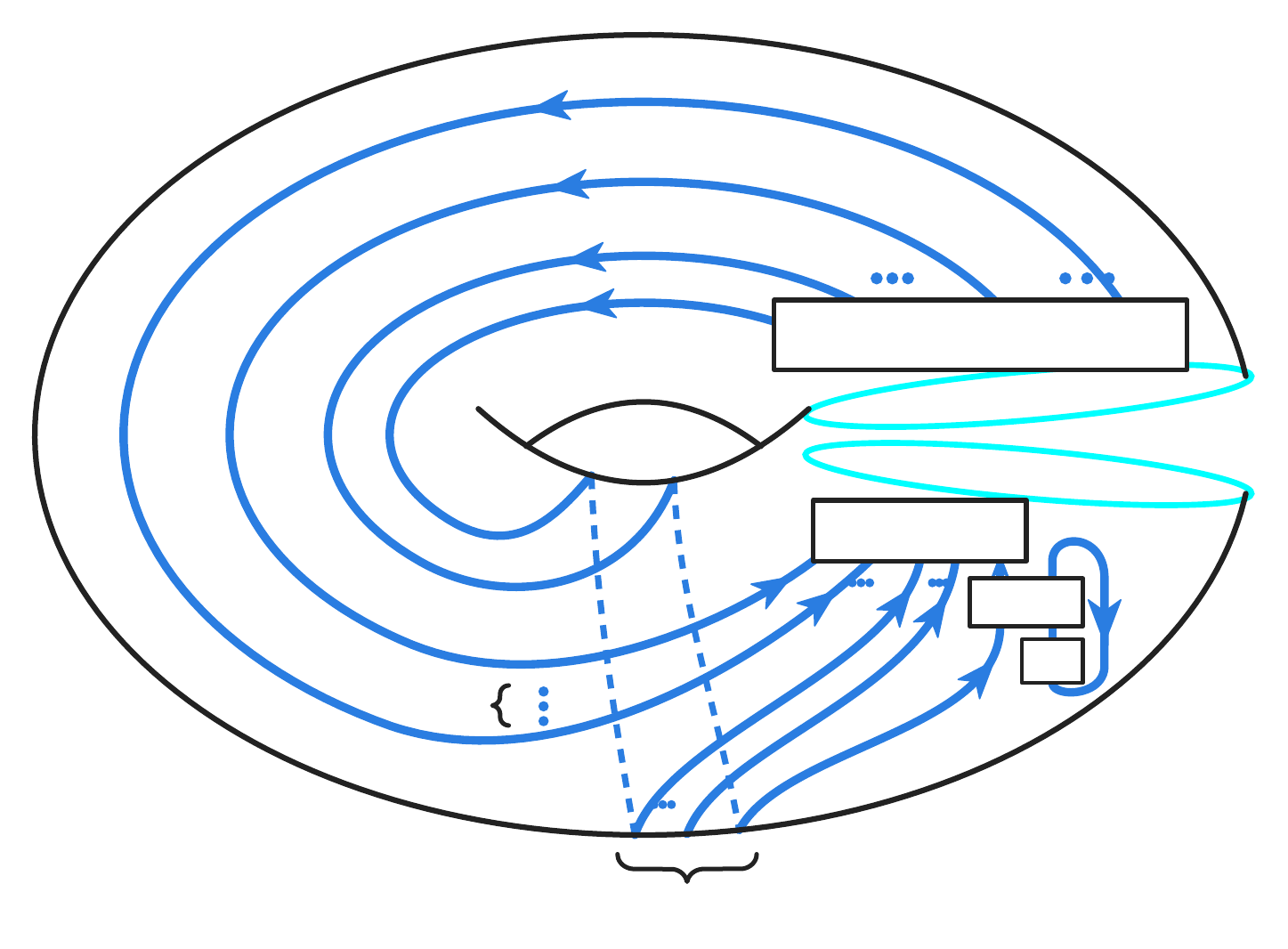}
       \put(31.7,19.9){\lb{x_i}}
      \put(29.7,13.6){\lb{y_i}}
      \put(33.2,11.2){\lbs{\widetilde{d}}}
      \put(34.0,9.3){\lbs{\eta}}
      \put(13.6,8.3){\lb{p-q}}
      \put(22.2,1.0){\lb{q}}
      \put(37.1,10.9){\lb{G}}
      \put(11.5,27.0){\lb{G}}
      \put(30.1,6.3){\lb{G}}
    \end{overpic}}} \right) \\[1ex]
    & = F'\left( \sum_i \vcenter{\hbox{\begin{overpic}[abs, unit=1mm, scale = .14]{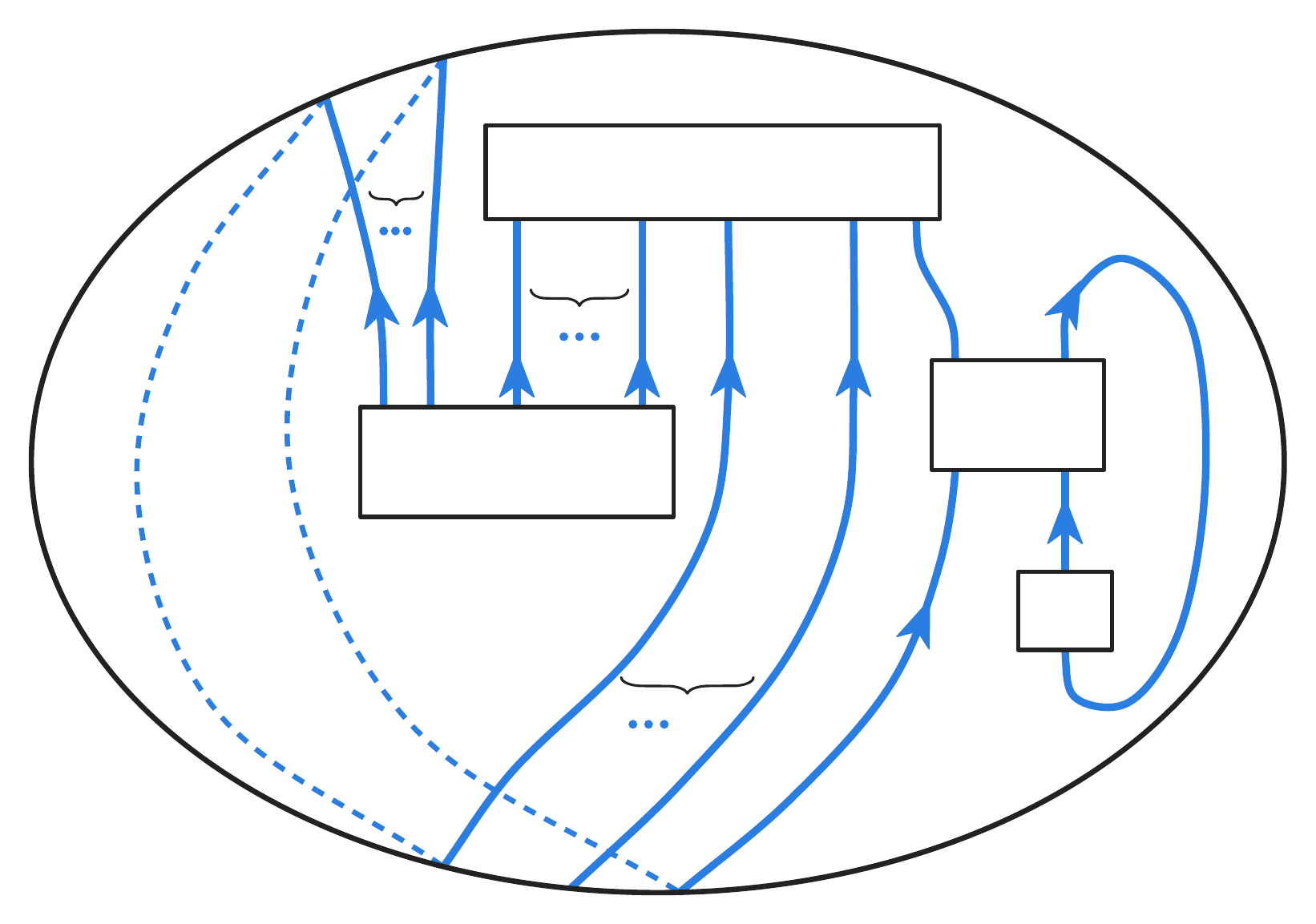}
      \put(15.3,13.6){\lb{x_i}}
      \put(21.1,22.3){\lb{y_i}}
      \put(30.1,15.0){\lb{\widetilde{d}}}
      \put(31.5,9.2){\lb{\eta}}
      \put(17.1,19.9){\lbs{p-q}}
      \put(11.7,22.8){\lb{q}}
      \put(21.2,8.5){\lbs{q-1}}
      \put(29.4,19.2){\lb{G}}
      \put(32.8,11.9){\lb{G}}
      \put(37.0,13.6){\lb{G}}
      \put(28.6,8.1){\lb{G}}
    \end{overpic}}} \right)
    = F'\left( \sum_i \vcenter{\hbox{\begin{overpic}[abs, unit=1mm, scale = .14]{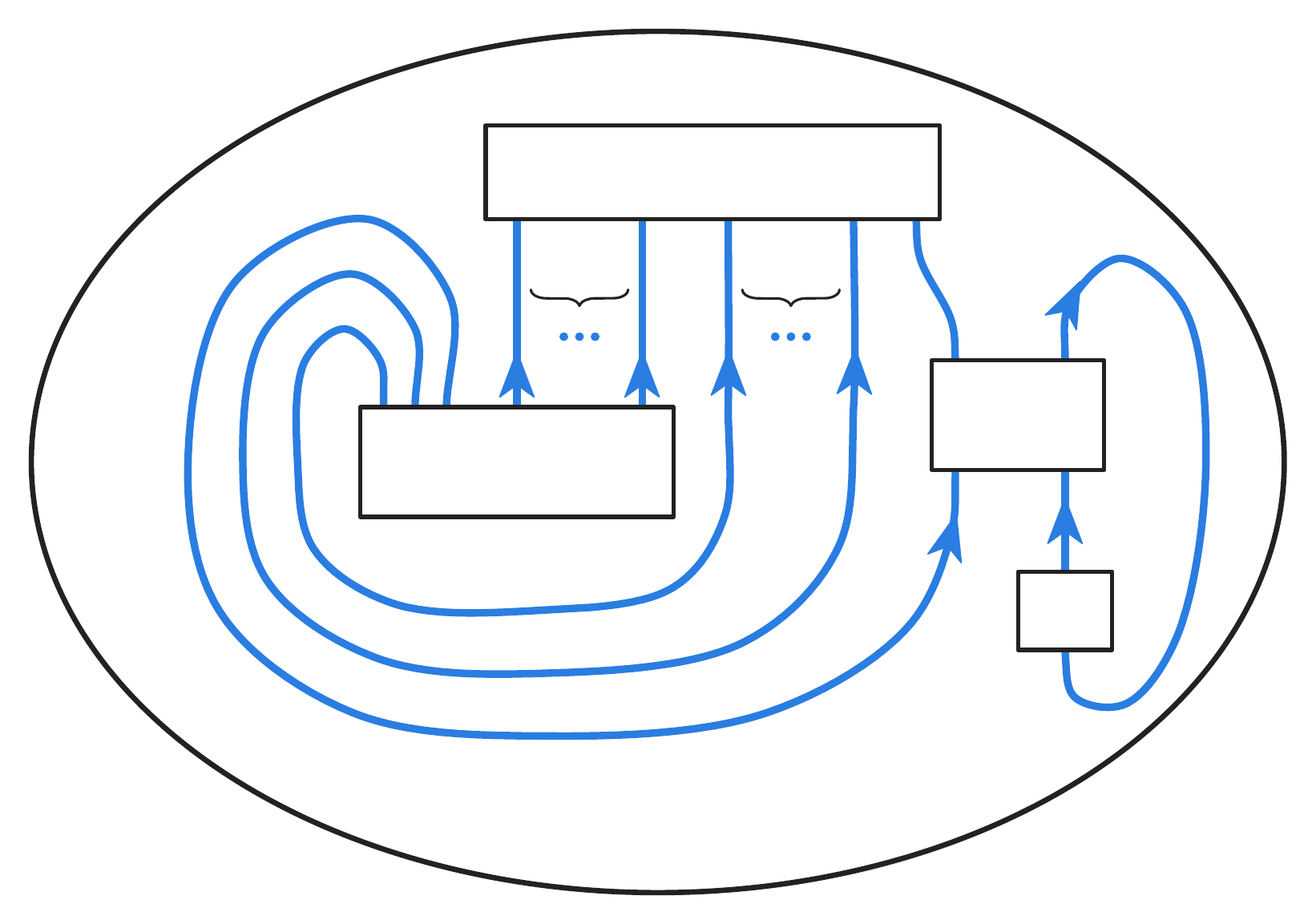}
      \put(15.3,13.6){\lb{x_i}}
      \put(21.1,22.3){\lb{y_i}}
      \put(30.1,15.0){\lb{\widetilde{d}}}
      \put(31.5,9.2){\lb{\eta}}
      \put(17.1,19.9){\lbs{p-q}}
      \put(23.4,19.9){\lbs{q-1}}
      \put(29.4,19.2){\lb{G}}
      \put(32.8,11.9){\lb{G}}
      \put(37.0,13.6){\lb{G}}
      \put(26.5,11.3){\lb{G}}
    \end{overpic}}} \right) \\[1ex]
    & = t_P\left( \sum_i \vcenter{\hbox{\begin{overpic}[abs, unit=1mm, scale = .2]{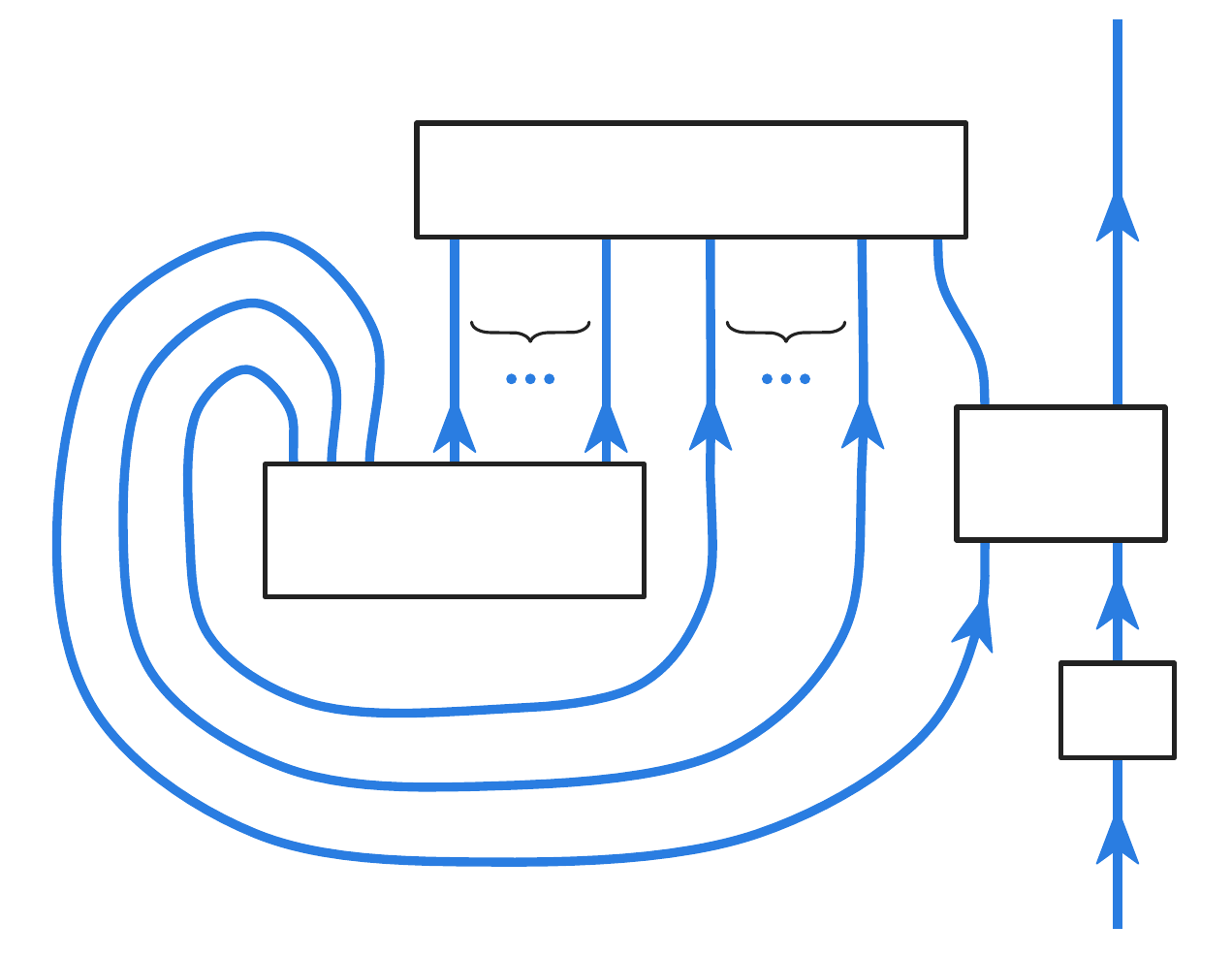}
      \put(15.9,14.9){\lb{x_i}}
      \put(24.2,27.1){\lb{y_i}}
      \put(37.0,16.9){\lb{\widetilde{d}}}
      \put(39.0,8.6){\lb{\eta}}
      \put(18.6,23.7){\lb{p-q}}
      \put(27.4,23.7){\lb{q-1}}
      \put(41.0,29.1){\lb{G}}
      \put(36.0,22.6){\lb{G}}
      \put(41.0,12.4){\lb{G}}
      \put(41.0,4.0){\lb{G}}
      \put(31.8,11.6){\lb{G}}
    \end{overpic}}} \right)
  \end{aligned}
  \label{fig:computation} \]

    Expressed symbolically, the chromatic spherical invariant of $L(p, q)$ is given, in any unimodular pivotal finite tensor category $\cC$ with non-degenerate modified trace $t$ on a tensor ideal $\cI$, by
    \begin{equation}\label{kuperberg-start}
        \Ku_{\cC}(L(p,q)) = \displaystyle\sum_i t_G[(\overleftarrow{\ev}_{G^{\otimes q}}\otimes y_i\otimes \id_G)\circ(\id_{(G^*)^{\otimes q}}\otimes x_i\otimes\id_{G^{\otimes q-1}}\otimes \tilde d)\circ(\overrightarrow{\coev}_{G^{\otimes q}}\otimes\eta)].
    \end{equation}
    We now specialize to $\cC=\Rep(H)$ and take $G=H$ as our projective generator. The modified trace for $H$ is given by $t_H(f:H\to H) = \lambda(gf(1))$. Thus, to evaluate \eqref{kuperberg-start}, we only need to identify the image of $1_H$ under the sequence of compositions given. 
    
    Let $\bk_\epsilon$ denote the trivial representation of $H$, whose underlying vector space is $\bk$ and action is generated by $h\cdot 1_\bk = \epsilon(h)$. Let $B$ be an arbitrary basis for $H^{\otimes q}$. The right coevaluation $\overrightarrow{\coev}_{H^{\otimes q}}:\bk_\epsilon\to (H^{\otimes q})^{*}\otimes H^{\otimes q}$ is given by 
    $$\overrightarrow{\coev}_{H^{\otimes q}}:1_\bk\mapsto\sum_{b\in B} b^*\otimes \Delta^{q-1}(g^{-1})b.$$
    Moreover, $\eta:H\to H$ is chosen to be the map given by $\eta = -\cdot\Lambda$. Thus, after the first map $\overrightarrow{\coev}_{H^{\otimes q}}\otimes\eta$ in the composition in \eqref{kuperberg-start}, we have
    \begin{equation}\label{post-coev}
        1_H\mapsto\sum_{b\in B} b^*\otimes \Delta^{q-1}(g^{-1})b\otimes \Lambda =: s_1.
    \end{equation} 
    
    Let $\Omega$ be a basis for $H$. For a non-negative integer $n$, let $\Omega^{n} = \{\omega_1\otimes\dots\otimes\omega_{n}|\omega_i\in \Omega, i=1,\dots,n\}$ and observe that this is a basis for $H^{\otimes n}$. Note that 
    \begin{align*}
        \Omega^{q-1}\otimes 1_H &= \{\hat\bom\otimes 1 | \bom\in\Omega^{q-1}\},\\
        \Omega^{q-1}\otimes 1_H\otimes\Omega^{p-q} &= \{\bom\otimes 1\otimes\bom' | \bom\in\Omega^{q-1}, \bom'\in \Omega^{p-q}\}
    \end{align*}
    are $H$-bases for $H^{\otimes q}$ and $H^{\otimes p}$ respectively. We set $B = \Omega^{q-1}\otimes 1_H$. The following is a basis for $\Hom(\bk_\epsilon\to H^{\otimes p})$: 
    $$\{x_{\bom,\bom'}:1_\bk\mapsto \Delta^{p-1}(\Lambda)(\bom\otimes 1\otimes\bom') | \bom\in\Omega^{q-1}, \bom'\in \Omega^{p-q}\},$$
    The dual basis for $\Hom(H^{\otimes p}\to \bk_{\epsilon})$ under the inner product defined by the modified trace map is generated by
    $$\{y_{\bom,\bom'}:\hat\bom\otimes 1_H\otimes\hat\bom'\mapsto \delta_{\bom=\hat\bom}\delta_{\bom'=\hat\bom'}1_\bk\}.$$
    In $\Rep(H)$, the chromatic morphism $\tilde d_H:H\otimes H\to H\otimes H$ is given by $\tilde d_H(x\otimes y)=\lambda(S(y^{(1)})gx) y^{(2)} \otimes y^{(3)}$. After the second map $\id_{(H^*)^{\otimes q}}\otimes x_{\bom,\bom'}\otimes\id_{H^{\otimes q-1}}\otimes \tilde d$ of the composition, we have
    \begin{align*}
        s_1 =\sum_{h\in\Omega, \hat\bom\in\Omega^{q-1}} &(\Delta^{q-1}(h)(\hat\bom\otimes 1))^*\otimes \Delta^{q-1}(g^{-1}h)(\hat\bom\otimes 1)\otimes \Lambda\\
        = \sum_{h\in\Omega, \hat\bom\in\Omega^{q-1}} &(\Delta^{q-1}(h)(\hat\bom\otimes 1))^*\otimes \Delta^{q-2}(g^{-1}h^{(1)})\hat\bom\otimes g^{-1}h^{(2)}\otimes \Lambda\\
        \mapsto \sum_{h\in\Omega, \hat\bom\in\Omega^{q-1}} &\bigg[(\Delta^{q-1}(h)(\hat\bom\otimes 1))^*\otimes \Delta^{p-1}(\Lambda)(\bom\otimes 1\otimes\bom')\\
        &\otimes\Delta^{q-2}(g^{-1}h^{(1)})\hat\bom\otimes \lambda(S(\Lambda^{(1)})h^{(2)})\Lambda^{(2)}\otimes \Lambda^{(3)}\bigg]\\
        = \sum_{h\in\Omega, \hat\bom\in\Omega^{q-1}} &\bigg[(\Delta^{q-1}(h)(\hat\bom\otimes 1))^*\otimes \Delta^{q-1}(\Lambda^{(1')})(\bom\otimes 1)\otimes\Delta^{p-q-1}(\Lambda^{(2')})\bom'\\
        &\otimes\Delta^{q-2}(g^{-1}h^{(1)})\hat\bom\otimes \lambda(S(\Lambda^{(1)})h^{(2)})\Lambda^{(2)}\otimes \Lambda^{(3)}\bigg] =: s_2^{\bom, \bom'},
    \end{align*}
    where, if $q=1$, we set $\Delta^{-1} = \epsilon$.
    
For the final step of our composition, we plug into $\overleftarrow\ev_{H^{\otimes q}}\otimes y_{\bom,\bom'}\otimes\id$, and find that evaluation kills all but one term in the sum leaving
\begin{align*}
    s_2^{\bom, \bom'} &\mapsto \lambda(S(\Lambda^{(1)})(\Lambda^{(1')})^{(2)})y_{\bom,\bom'}[\Delta^{p-q-1}(\Lambda^{(2')})\bom'\otimes\Delta^{q-2}(g^{-1}(\Lambda^{(1')})^{(1)})\bom\otimes \Lambda^{(2)}] \Lambda^{(3)}\\
    &= \lambda(S(\Lambda^{(1)})\Lambda^{(2')})y_{\bom,\bom'}[\Delta^{p-q-1}(\Lambda^{(3')})\bom'\otimes\Delta^{q-2}(g^{-1}\Lambda^{(1')})\bom\otimes \Lambda^{(2)}] \Lambda^{(3)} =: s_3^{\bom,\bom'}.
\end{align*}

In any finite-dimensional unimodular Hopf algebra $H$, we may assume the basis $\Omega$ of $H$ consists of $1, \Lambda$ and no other terms with $\lambda(\omega)\neq 0$ nor $\epsilon(\omega)\neq 0$. Therefore, letting $\Delta(\Lambda)=\sum\limits_{\omega,\omega'} d_{\omega,\omega'} \omega\otimes \omega' $, since $(\epsilon\otimes \id)(\Delta \Lambda)=\Lambda$, this implies that $d_{1,\Lambda}=1$ and $d_{1,\omega}=0$ for all other $\omega\in\Omega$. Similarly, $d_{\Lambda,1}=1$ and $d_{\omega,1}=0$ for all other $\omega\in\Omega$. Further, by definition of $\lambda$, $(\id \otimes \lambda)(\Delta \Lambda)=\lambda(\Lambda)1=1$, so, assuming $\dim H > 1$,
\begin{align*}
    1 = (\id\otimes \lambda) \Delta(\Lambda) &= (\id \otimes \lambda)(1\otimes \Lambda + \Lambda \otimes 1 + \sum\limits_{\omega,\omega'\ne 1} d_{\omega,\omega'} \omega\otimes \omega')\\
    &=1+\sum\limits_{\omega\ne 1}d_{\omega,\Lambda} \omega,
\end{align*} 
and thus, $d_{\omega,\Lambda}=0$ for all $\omega\ne 1$. This equality is also true when $\dim H = 1$. We need no longer make this assumption going forward. This implies $\lambda(\Lambda^{(2)})\Lambda^{(1)}\otimes \Lambda^{(3)} = 1\otimes 1$. We begin evaluating the modified trace in Equation \eqref{kuperberg-start} as follows:
\begin{align*}
   t_P(s_3^{\bom, \bom'}) &= \lambda(S(\Lambda^{(1)})\Lambda^{(2')})y_{\bom,\bom'}[\Delta^{p-q}(\Lambda^{(3')})\bom'\otimes\Delta^{q-2}(g^{-1}\Lambda^{(1')})\bom\otimes \Lambda^{(2)}]\lambda(g\Lambda^{(3)}) \\
    &= \lambda(g^{-1}\Lambda^{(2)})y_{\bom,\bom'}[\Delta^{p-q-1}(\Lambda^{(3)})\bom'\otimes\Delta^{q-2}(g^{-1}\Lambda^{(1)})\bom\otimes g^{-1}]\\
    &= y_{\bom,\bom'}[\Delta^{p-q-1}(g)\bom'\otimes\bom\otimes 1].
\end{align*}

One may quickly verify the following formula:
\begin{align*}
    y_{\bom,\bom'}(\Delta^{p-1}(h)(\hat\bom\otimes 1\otimes \hat\bom')) &= (\bom^*\otimes\lambda\otimes(\bom')^*)(\Delta^{p-1}(\Lambda)\Delta^{p-1}(h)(\hat\bom\otimes 1\otimes \hat\bom')).
\end{align*}
Thus, we have an equality $y_{\bom,\bom'} = (\bom^*\otimes\lambda\otimes(\bom')^*)(\Delta^{p-1}(\Lambda)\cdot -)$. Therefore, the chromatic spherical invariant is given by
$$\Ku_{\Rep(H)}(L(p,q)) = \sum_{\bom, \bom'} t_P(s_3^{\bom, \bom'}) = \sum_{\bom, \bom'} (\bom^*\otimes\lambda\otimes(\bom')^*)(\Delta^{p-1}(\Lambda)(\Delta^{p-q-1}(g)\bom'\otimes\bom\otimes 1)).$$
If $q\leq p-q$, then the chromatic spherical invariant is given by
\begin{align}
    = \sum_{\bom, \bom'} &\omega_1^*(\Lambda^{(1)}g\omega_1')\dots\omega_{q-1}^*(\Lambda^{(q-1)}g\omega_{q-1}')\lambda(\Lambda^{(q)}g\omega_q')\nonumber\\\times&(\omega'_{1})^*(\Lambda^{(q+1)}g\omega'_{q+1})\dots(\omega'_{p-2q})^*(\Lambda^{(p-q)}g\omega'_{p-q})\nonumber\\\times&(\omega'_{p-2q+1})^*(\Lambda^{(p-q+1)}\omega_1)\dots(\omega'_{p-q-1})^*(\Lambda^{(p-1)}\omega_{q-1})(\omega'_{p-q})^*(\Lambda^{(p)}).\label{eq:q<p-q}
    \intertext{If $q > p-q$, then the chromatic spherical invariant is given by}
    = \sum_{\bom, \bom'} &\omega_1^*(\Lambda^{(1)}g\omega_1')\dots\omega_{p-q}^*(\Lambda^{(p-q)}g\omega_{p-q}')\nonumber\\\times&\omega_{p-q+1}^*(\Lambda^{(p-q)}\omega_1)\dots\omega_{q-1}^*(\Lambda^{(q-1)}\omega_{2q-p-1})\lambda(\Lambda^{(q)}\omega_{2q-p})\nonumber\\\times&(\omega'_1)^*(\Lambda^{(q+1)}\omega_{2q-p+1})\dots(\omega'_{p-q-1})^*(\Lambda^{(p-1)}\omega_{q-1})(\omega'_{p-q})^*(\Lambda^{(p)}).\label{eq:q>p-q}
\end{align}
For $n=1,\dots,q-1,q+1,\dots p$, let $\bar n = n + p\bZ$, set 
\begin{align*}
    \bar\omega_{\overline{n}} &= \begin{cases}
        \omega_n&n<q,\\
        \omega_{n-q}'&n > q,
    \end{cases}
\end{align*} 
and recall the notations of $\Lambda^{(\bar n)}$ and $i_{\bar n}$ from the theorem statement. Then, both expressions \eqref{eq:q<p-q} and \eqref{eq:q>p-q} may be rearranged as follows:
\begin{align*}
    &\Ku_{\Rep(H)}(L(p,q))\\
    &=\sum_{\bom, \bom'}\bar\omega_1^*(\Lambda^{(1)}g^{i_{\overline{1+q}}}\bar\omega_{\overline{1+q}})\bar\omega_2^*(\Lambda^{(2)}g^{i_{\overline{2+q}}}\bar\omega_{\overline{2+q}})\dots\bar\omega_{\overline{q-1}}^*(\Lambda^{(q-1)}g^{i_{\overline{(q-1)+q}}}\bar\omega_{\overline{(q-1)+q}})\lambda(\Lambda^{(q)}g^{i_{\overline{q+q}}}\bar\omega_{\overline{q+q}})\\
    &\times\bar\omega_{\overline{q+1}}^*(\Lambda^{(q+1)}g^{i_{\overline{(q+1)+q}}}\bar\omega_{\overline{(q+1)+q}})\dots\bar\omega_{\overline{p-1}}^*(\Lambda^{(p-1)}g^{i_{\overline{(p-1)+q}}}\bar\omega_{\overline{(p-1)+q}})\bar\omega_{\overline{p}}^*(\Lambda^{(p)})\\
    &= \sum_{\bom, \bom'}\lambda(\Lambda^{(q)}g^{i_{\overline {2q}}}\bar\omega_{\overline{2q}})
\bar\omega_{\overline{2q}}^*(\Lambda^{(\overline{2q})}g^{i_{\overline {3q}}}
\bar\omega_{\overline{3q}})
\bar\omega_{3q}^*(\Lambda^{(\overline{3q})}g^{i_{\overline {4q}}}
\bar\omega_{\overline{4q}})\dots
\bar\omega_{(p-1)q}^*(\Lambda^{(\overline{(p-1)q})}g^{i_{\overline {pq}}}
\bar\omega_{\overline{pq}})\bar\omega_{\overline{pq}}^*(\Lambda^{(p)}).
\end{align*}
Note that $q+p\bZ$ is an additive generator of $\bZ_p$, so every $\Lambda^{(i)}$ for $i=1,\dots, p$ does occur and there are no repeats, so this expression is actually well-defined. Given a basis $B$ of a space $V$, we have the basic linear algebra identity $\sum_{b\in B}b^*(v)b = v$ for any $v\in V$. This allows us to simplify $\Ku_{\Rep(H)}(L(p,q))$ as in the theorem statement:
$$\Ku_{\Rep(H)}(L(p,q)) = \lambda(\Lambda^{(q)}g^{i_{\overline {2q}}}\Lambda^{(\overline{2q})}g^{i_{\overline {3q}}}\Lambda^{(\overline{3q})}g^{i_{\overline {4q}}}
\dots g^{(\overline{(p-1)q})}\Lambda^{(\overline{(p-1)q})}g\Lambda^{(p)}).$$
\end{proof}

For the lens space $L(0,1)$, following the same method dragging of $x_i$, the invariant evaluates to 
\begin{align*}
        \Ku_{\cC}(L(0,1)) &= \displaystyle\sum_i t_G[( (y_i\otimes id_G) \circ \tilde d\circ(x_i \otimes\eta)].
\end{align*}
Specializing to $\cC=\Rep(H)$ and $G=H$ as our projective generator, noting that the bases $\{x_i\}$ and $\{y_i\}$ become the map $1\mapsto \Lambda$ and $\epsilon$ respectively since this is a normalized pair of bases. For $L(0,1)$, this quantity evaluates to \begin{align*}
    \Ku_{\Rep(H)}(L(0,1)) &= \lambda(S(\Lambda^{(1)}) g \Lambda)\epsilon(\Lambda^{(2)}) \lambda(g\Lambda^{(3)})=\epsilon(S(\Lambda^{(1)}))\epsilon(\Lambda^{(2)}) \lambda(g\Lambda^{(3)})=1.
\end{align*}

As a sample calculation, consider the Nichols Hopf algebra $\cK_2$, which is the $8$-dimensional complex Hopf algebra generated by $\{K,\xi_1,\xi_2\}$ subject to the relations, for $i=1,2$
\begin{equation}\label{Kn-pres}
 K^2 = 1,\hspace{1em} \xi_i^2 = 0,\hspace{1em}K\xi_i =-\xi_i K,\hspace{1em}\xi_1\xi_2=-\xi_2\xi_1. 
\end{equation}
The comultiplication $\Delta:\cK_2 \to \cK_2 \otimes \cK_2$, counit $\epsilon:\cK_2 \to\bC$, and antipode $S:\cK_2 \to \cK_2$ are given as follows: 
\begin{align*}
\Delta(K) &= K\otimes K, & \epsilon(K) &= 1, & S(K) &= K,\\
\Delta(\xi_i) &= K\otimes \xi_i + \xi_i\otimes 1, & \epsilon(\xi_i) &= 0, & S(\xi_i) &= -K\xi_i.
\end{align*}
Using these definitions, $\cK_2$ is a pivotal Hopf algebra with pivot given by $g=K$, a two-sided integral $\Lambda=\xi_1\xi_2(1+K)$ and a two-sided cointegral $\lambda=(\xi_1\xi_2)^*$ where the dual is taken in the basis $B = \{1,K,\xi_1,K\xi_1,\xi_2,K\xi_2,\xi_1\xi_2,K\xi_1\xi_2\}$. Therefore, $\cK_2$ is both unibalanced and unimodular. 
\begin{theorem}\label{thm:K2-calculation}
    The chromatic spherical invariant of the lens spaces $L(p, 1)$ for $\cK_2$ is given, for even $p\geq 2$, by $\Ku_{\Rep(\cK_2)}(L(p, 1))= 0$. For odd $p\geq 1$, they are given by $\Ku_{\Rep(\cK_2)}(L(p, 1))=1$.
\end{theorem}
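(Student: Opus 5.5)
The plan is to specialize the general formula of Theorem~\ref{thm:chromatic-lens}, in its simplified form for $q=1$, to $\cK_2$. That is, I would evaluate
$$\Ku_{\Rep(\cK_2)}(L(p,1))=\lambda\bigl(K^{-1}\Lambda^{(1)}\Lambda^{(2)}\cdots\Lambda^{(p)}\bigr)=\lambda\bigl(K\,\mu_p(\Lambda)\bigr),$$
with $g=K=K^{-1}$. Here $\mu_p:\cK_2\to\cK_2$ denotes the linear map $x\mapsto x^{(1)}x^{(2)}\cdots x^{(p)}$, which multiplies together the tensor factors of $\Delta^{p-1}(x)$. Since $\lambda=(\xi_1\xi_2)^*$ and $K\cdot K\xi_1\xi_2=\xi_1\xi_2$, the invariant equals the coefficient of $K\xi_1\xi_2$ in $\mu_p(\Lambda)$, computed in the basis $B$. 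Note that $K$ commutes with $\xi_1\xi_2$, so $\Lambda=\xi_1\xi_2+K\xi_1\xi_2$. As a sanity check, $p=1$ gives $\lambda(K\Lambda)=\lambda(K\xi_1\xi_2+\xi_1\xi_2)=1$, as claimed.

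Next I would write down the iterated coproducts explicitly. We have $\Delta^{p-1}(K)=K^{\otimes p}$, and
$$\Delta^{p-1}(\xi_i)=\sum_{j=1}^{p}K^{\otimes (j-1)}\otimes\xi_i\otimes 1^{\otimes(p-j)}.$$
Since $\Delta^{p-1}$ is an algebra map, $\Delta^{p-1}(\xi_1\xi_2K^{\epsilon})$ for $\epsilon\in\{0,1\}$ is a sum over pairs $(j,k)\in\{1,\dots,p\}^2$. In the $(j,k)$ term, slot $l$ contains $(K\text{ or }\xi_1\text{ or }1)(K\text{ or }\xi_2\text{ or }1)K^\epsilon$, according to whether $l<j$, $l=j$, or $l>j$ (and likewise for $k$). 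I would then apply $\mu_p$ to each term, moving all $K$'s to the left with the rule $K\xi_i=-\xi_iK$.

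The total power of $K$ in the $(j,k)$ term is $(j-1)+(k-1)+\epsilon p$. Only terms where this power is odd contribute, and each contributes $\pm1$ times $\xi_1\xi_2$ or $\xi_2\xi_1=-\xi_1\xi_2$, depending on whether $j\le k$ or $j>k$ (the case $j=k$ is included in $j\le k$). The sign is determined by the number of $K$'s that each $\xi_i$ must pass.

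Finally I would sum over $(j,k)$ and over $\epsilon$. I expect the $\epsilon=0$ and $\epsilon=1$ contributions to differ precisely by the parity of $p$. For $p$ even, the parity condition on $K$ is the same in both pieces, and I expect the sign patterns to cancel the contributions pairwise, for instance via an involution such as $(j,k)\mapsto(k,j)$ or $j\mapsto j+1$. For $p$ odd, I expect the contributions to collapse to $1$, matching the case $p=1$. The main obstacle is this sign bookkeeping. A single $\xi_i$ in slot $j$ must commute past all $K$'s in slots earlier than $j$, coming both from the other $\xi$'s coproduct and from $K^{\epsilon}$, and for $j=k$ one must also track the internal order $K^a\xi_1K^b\xi_2K^\epsilon$ within a single slot. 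To tame this, I would first compute $\mu_p(\xi_i)$, $\mu_p(\xi_1\xi_2)$ and $\mu_p(\xi_1\xi_2K)$ in closed form by induction on $p$, using $\Delta^{p}=(\Delta^{p-1}\otimes\id)\circ\Delta$. The induction writes $\mu_{p+1}(x)=\mu_p(x^{(1)})\,x^{(2)}$, which turns the problem into a short recursion on the coefficients of the four basis elements $\xi_1\xi_2$, $K\xi_1\xi_2$, $\xi_1$, $\xi_2$ (together with their $K$-multiples). Reading off the $K\xi_1\xi_2$ coefficient of the resulting period-two recursion should then give $0$ for even $p$ and $1$ for odd $p$.
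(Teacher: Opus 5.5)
Your reduction is the paper's. You specialize the $q=1$ formula to $\lambda(K\mu_p(\Lambda))$ and read off the coefficient of $K\xi_1\xi_2$ in $\mu_p(\xi_1\xi_2)+\mu_p(K\xi_1\xi_2)$. Your fallback, a recursion in $p$ on these coefficients, is exactly how the paper finishes, via its sequences $b_n$ and $d_n$. The gap is that the decisive computation, which is the whole content of the theorem, is only ``expected''. Moreover, the one concrete mechanism you propose for even $p$ cannot work.

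Carrying out your own double-sum bookkeeping settles the signs. The $(j,k)$ term of $\mu_p(\xi_1\xi_2)$ is $(-K)^{|j-k|}\xi_1\xi_2$. For $j<k$ the $\xi_1$ passes $k-j$ copies of $K$; for $j>k$ the $\xi_2$ passes $j-k-1$ copies, and then $\xi_2\xi_1=-\xi_1\xi_2$. The $(j,k)$ term of $\mu_p(K\xi_1\xi_2)$ is $K^{p-|j-k|}\xi_1\xi_2$ with no sign at all, because the extra $K$ in each slot absorbs those signs. Within a single $\epsilon$-piece, every nonzero contribution to the $K\xi_1\xi_2$-coefficient therefore has the same sign: $-1$ for $\epsilon=0$ and $+1$ for $\epsilon=1$. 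Each contribution also depends only on $|j-k|$, so $(j,k)\mapsto(k,j)$ fixes it. No involution inside one piece (swap or shift) can produce any cancellation.

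What actually happens is a cancellation across the two pieces at the same $(j,k)$. The coefficient of $K\xi_1\xi_2$ in $\mu_p(\Lambda)$ is
\[
\#\{(j,k)\in\{1,\dots,p\}^2 : p-|j-k|\ \text{odd}\}\;-\;\#\{(j,k)\in\{1,\dots,p\}^2 : |j-k|\ \text{odd}\}.
\]
For even $p$ the two sets coincide, giving $0$. For odd $p$ they are complementary. Since $\#\{|j-k|\ \text{odd}\}=\lfloor p^2/2\rfloor$, the value is $\lceil p^2/2\rceil-\lfloor p^2/2\rfloor=1$. This is the precise form of your remark that the two pieces ``differ by the parity of $p$''.

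With this filled in, your direct route is a genuine and shorter alternative to the paper's. It yields the closed forms $b_{p-1}=-\lfloor p^2/2\rfloor$ and $d_{p-1}=\lceil p^2/2\rceil$ immediately as lattice-point counts, instead of by solving the coupled recurrences for $c_n,d_n$.
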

\begin{proof}
First, note that $$\Delta^{n+1}(\Lambda)= (\Delta^{n}\otimes \id)\big((1\otimes \xi_1\xi_2 +\xi_1\xi_2\otimes 1 + K\xi_2\otimes \xi_1 -K\xi_1\otimes \xi_2)(1\otimes 1 + K\otimes K)\big).$$ Let $m$ denote the multiplication of all terms in a tensor product. Then for any power $n$, note that 
\[m(\Delta^n(\xi_j))=\sum\limits_{i=0}^n K^i \xi_j = \left(\left\lfloor \frac{n}{2}\right\rfloor + 1\right)\xi_j + \left\lfloor \frac{n+1}{2}\right\rfloor K\xi_j.\]
    Let $m(\Delta^n(\xi_1\xi_2))=a_n \xi_1\xi_2 + b_n K \xi_1 \xi_2$ and let $m(\Delta^n(K\xi_1\xi_2))=c_n \xi_1\xi_2 + d_n K \xi_1 \xi_2$. Then, \begin{align*}m(\Delta^{n+1}(\xi_1\xi_2))&=m\big(1\otimes \Delta^{n}(\xi_1\xi_2) + \xi_1\xi_2 \otimes \Delta^{n}(1) + K\xi_2 \otimes \Delta^{n}(\xi_1) -K\xi_1\otimes \Delta^{n}(\xi_2)\big) )\\
    &=\left(a_n + 1 + 2\left\lfloor \frac{n+1}{2}\right\rfloor\right) \xi_1\xi_2 +\left(b_n - 2\left(\left\lfloor \frac{n}{2} \right\rfloor+1\right)\right) K\xi_1\xi_2.
    \end{align*}
    Thus, $b_{n+1}=b_n - 2(\lfloor \frac{n}{2} \rfloor+1)$. Observe that $b_0 = 0$. Thus, $b_{n}=-2n-2\sum\limits_{i=1}^{n-1}\lfloor \frac{i}{2} \rfloor$. Therefore, $b_n=-\lfloor\frac{(n+1)^2}{2}\rfloor$.
    
    Now, we compute $m(\Delta^n(K\xi_1\xi_2))$. We may obtain a recurrence as follows.
    \begin{align*}
        m(\Delta^{n+1}(K\xi_1\xi_2))&=m\big(\Delta^n(K)\otimes K\xi_1\xi_2 + \Delta^n(K\xi_1\xi_2) \otimes K + \Delta^n(\xi_2) \otimes K\xi_1 -\Delta^n(\xi_1) \otimes K\xi_2\big) \\
        &= \left(d_n + \left\lfloor \frac{n+1}{2}\right\rfloor\right) \xi_1\xi_2 + 2\left(c_n + \left\lfloor \frac{n}{2} \right\rfloor+1\right) K\xi_1\xi_2 + K^{n+1}\xi_1\xi_2.
    \end{align*}
    Let 
    $$p(n) = \begin{cases}
        0 & n\equiv 0\pmod{2},\\
        1 & n\equiv 1\pmod{2}.
    \end{cases}$$
    Then, we may express a recurrence on $c_n$ and $d_n$ as follows:
    \begin{align*}
        c_{n+1}&=d_n+p(n+1)+2\left\lfloor \frac{n+1}{2}\right\rfloor,\\
        d_{n+1}&=c_n+p(n)+2\left(\left\lfloor \frac{n}{2}\right\rfloor + 1\right).
    \end{align*} 
    Substituting the $c_n$ relation into the expression for $d_n$, we get that $d_{n+1}=d_{n-1} + 2p(n) + 4\lfloor n/2\rfloor+2$, which simplifies to $d_{n+1}=d_{n-1}+2n+2$. Our base cases are $d_0=1$ and $d_1=2$. Solving the recurrence relation, we get that $d_n=\frac{(n+1)^2+1}{2}$ for even $n$. For odd $n$, $d_n=\frac{(n+1)^2}{2}$. Then, the chromatic spherical invariant for $L(p,1)$ is given by 
    $$\Ku_{\Rep(\cK_2)}(L(p,1)) = (\xi_1\xi_2)^*(Km(\Delta^{p-1}((1+K)\xi_1\xi_2))) = b_{p-1}+d_{p-1},$$
    in the basis $B = \{1,K,\xi_1,K\xi_1,\xi_2,K\xi_2,\xi_1\xi_2,K\xi_1\xi_2\}$. For odd $p\geq 1$, this evaluates to $1$. For even $p\geq 2$, this evaluates to $0$. 
\end{proof}

\section{Admissible skein modules $\SM(M)$}\label{section:admissible}

In this section we compute the admissible skein module of handlebodies and lens spaces.

\subsection{Definitions} We first define the admissible skein module and discuss some previously known computations for the $2$- and $3$-dimensional discs and spheres. 

\begin{definition}[$\mathcal I$-skein relations]
    Let $M$ be an $n$-dimensional manifold, where $n= 2,3$. Recall the definition of $\cI$-admissible graphs and ribbon graphs from Subsection \ref{subsection:icolouredgraphs}. Suppose two $\cI$-admissible graphs or ribbon graphs $\Gamma_1$ and $\Gamma_2$ in $M$ are identical outside of an embedded box $[0,1]^n\subseteq M$ whose top and bottom boundary each intersect an $\cI$-colored edge/ribbon transversely and nontrivially and otherwise there is no intersection of $\partial [0,1]^n$ with $\Gamma_1$ or $\Gamma_2$. Then, $\Gamma_1$ and $\Gamma_2$ are related by an \textit{$\cI$-skein relation} if $\mathcal{P}_n(\Gamma_1\cap [0,1]^n) = \mathcal{P}_n(\Gamma_2\cap [0,1]^n)$, where $\Pen_n$ is the Penrose or Reshetikhin-Turaev functor for $n=2$ or $n=3$ respectively.
\end{definition}

\begin{definition}[Admissible skein modules] 
Let $M$ be an oriented surface (or $3$-manifold), $\mathcal C$ be a pivotal (resp., ribbon) tensor category, and $\mathcal I$ be its tensor ideal. The {\it admissible skein module} of $M$, defined in \cite{admissible} and denoted by $\mathcal S_{\mathcal I}(M)$, is defined to be the $\Bbbk$-linear span of all $\mathcal I$-admissible graphs (resp., ribbon graphs)  in $M$ modulo the $\mathcal I$-skein relations.

\end{definition}

\begin{prop}[\cite{admissible}] The admissible skein module of $M$ is generated by $\mathcal I$-colored (ribbon) graphs. 
    
\end{prop}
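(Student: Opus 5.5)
The plan is to reduce an arbitrary $\cI$-admissible graph to one whose edges are all colored by objects of $\cI$, using only $\cI$-skein relations. Fix an admissible graph (or ribbon graph) $\Gamma$ in $M$. Work one connected component of $M$ at a time; by admissibility each component contains at least one edge $e$ colored by some $P\in\cI$. The key tool is the ideal property: for any $X\in\cC$, both $P\otimes X$ and $X\otimes P$ lie in $\cI$.

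First I absorb the $\cC$-colored edges into $\cI$-colored ones, working within a single connected component $\Gamma_0$ of the graph that already contains an $\cI$-colored edge.
\begin{enumerate}
\item Let $f$ be an edge of $\Gamma_0$ colored by $X\in\cC$ that is adjacent to an $\cI$-colored edge $e$ through a coupon $c$ (or shares an endpoint with it).
\item Choose a small box around $c$ that meets $e$ transversely on one face.
\item Inside the box, insert the identity $\id_{P\otimes X}$, factored through a single strand colored $P\otimes X$ by a pair of coupons. These coupons are the canonical isomorphisms $P\otimes X\to P\otimes X$, merging and splitting.
\item Since the box meets the $\cI$-colored strand $e$ on its top and bottom, this is a legitimate $\cI$-skein relation.
\item Isotope the merging coupon along $f$ and compose it into $c$. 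The edge $f$ is then replaced by the $\cI$-colored strand $P\otimes X$ (or $X\otimes P$, depending on orientation and duals).
\end{enumerate}
Repeating this along a spanning tree of $\Gamma_0$, starting from $e$, recolors every edge of $\Gamma_0$ by an object of $\cI$. Each step uses a box that contains an already $\cI$-colored strand. When the graph is ribbon I use the Reshetikhin–Turaev functor $\Pen_3$ instead of $\Pen_2$; the argument is unchanged because it is entirely local.

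Next I handle graph components that contain no $\cI$-colored edge. Such a component $\Gamma_1$ lies in the same connected component of $M$ as some $\cI$-colored edge $e$.
\begin{enumerate}
\item Isotope a small arc of $e$ close to an edge $h$ of $\Gamma_1$ by finger moves in $M$. This is possible because $M$ is connected, and crossings are allowed in the 3-dimensional case.
\item In a box meeting $e$ on its top and bottom and containing a segment of $h$, the morphism is $\id_P\otimes\id_Y$ (up to braiding in the ribbon case).
\item Apply the skein relation replacing $\id_P\otimes \id_Y$ with the composite $P\otimes Y\to P\otimes Y\to P\otimes Y$ through a single $\cI$-colored strand. This joins $\Gamma_1$ to the $\cI$-colored part.
\item The previous step then recolors $\Gamma_1$ as well.
\end{enumerate}
Closed loops without coupons are handled by first inserting an identity coupon.

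I expect the main obstacle to be the surface case, where no crossings are allowed. There the finger move must avoid other edges, so joining needs to proceed inductively through adjacent faces of the graph's complement, merging neighbors one at a time. I would argue that each face of the complement within a component of $M$ is bounded by graph components that can be merged successively, starting from one that touches $e$. The remaining step is bookkeeping: show that each local replacement is valid, meaning its box meets an $\cI$-colored strand transversely on its top and bottom and $\Pen_n$ agrees on both sides. This holds by construction, because each inserted pair of coupons composes to an identity.
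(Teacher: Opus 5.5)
The paper gives no proof of this statement; it quotes it from \cite{admissible}, so I am judging your argument on its own terms. The overall strategy is the right one: use the ideal property to fuse a non-$\cI$ edge with a neighbouring $\cI$-colored strand, propagate through each graph component, and join components by finger moves (face by face on a surface). However, there is a gap at the central step, and a secondary one in your claim that every box is legitimate.

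\textbf{The recoloring step does not recolor $f$.} You insert merge/split coupons in a small box near $c$ and compose one of them into $c$. After that, only a short stretch of $f$ next to $c$ is colored $P\otimes X$. The other coupon still sits next to $c$, and from it an $X$-colored edge still runs to the far endpoint $c'$ of $f$. So the number of non-$\cI$ edges has not decreased. Applying the step again at the new coupon, whose $X$-output is adjacent to its $P$-output, only recreates the same picture with an extra identity coupon. Hence the iteration ``along a spanning tree'' never terminates.

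The missing idea is that the second coupon must be slid along the \emph{entire} edge $f$ to $c'$, dragging its $P$-output with it as a strand running parallel to $f$. This is an isotopy supported in a thin neighbourhood of $f$, so it is allowed on a surface too. The coupon is then composed into $c'$. Only now is $f$ replaced by a $P\otimes X$-colored edge, with a new $P$-colored edge running from $c'$ back to the rest of $e$. The count of non-$\cI$ edges drops by one and the induction closes. On a surface, ``adjacent at $c$'' must also mean consecutive in the cyclic order of edges around $c$; otherwise $e$ and $f$ cannot be made parallel without crossing other edges. Such a consecutive pair always exists when $c$ carries edges of both kinds.

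\textbf{Not every box meets $\cI$ on top and bottom.} Your closing claim holds ``by construction'' for the boxes where you insert merge/split, since $e$ crosses those. It fails for the boxes in which you compose the new coupons into $c$ or $c'$. Such a box contains an original coupon. If that coupon has no $\cI$-colored edge on its other side (e.g.\ $c\colon\one\to P\otimes X$), only one face of the box meets an $\cI$-colored strand. That is not an $\cI$-skein relation as defined in the paper.

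This is repairable, but it needs to be said:
\begin{itemize}
\item In a 3-manifold, first push an arc of an $\cI$-colored strand through the box beside its contents. Both sides of the relation then change by the same identity strand.
\item On a surface, first fuse all edges on the $\cI$-side of the coupon into a single strand. That box is crossed by $e$, so it is legitimate. The fused strand is then the only one on that face, so it can be zigzagged through the box.
\end{itemize}
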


\begin{theorem}[\cite{admissible}]
For a tensor ideal $\cI$ in a pivotal (for 2-manifolds) or ribbon (for 3-manifolds) tensor category $\cC$, we have the following identifications:
\begin{enumerate}[(i)]
    \item $\mathcal S_{\mathcal I}(D^2\setminus\{x\})^* \cong \mathcal S_{\mathcal I}(H_1)^*\cong \{$cyclic traces on  $\mathcal I\}$.
    \item $\mathcal S_{\mathcal I}(D^2)^* \cong \{$right m-traces on $\mathcal I\} \cong \{$left m-traces on $\mathcal I\}$.
    \item $ \mathcal S_{\mathcal I}(S^2)^* \cong \mathcal S_{\mathcal I}(D^3)^* \cong \mathcal S_{\mathcal I}(S^3)^*\cong \{$m-traces on $\mathcal I\}$.
\end{enumerate}
    
\end{theorem}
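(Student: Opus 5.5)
The plan is to prove each identification by constructing explicit mutually inverse linear maps between functionals on the skein module and the corresponding families of traces. Take $\phi\in\SM(M)^*$ and an object $P\in\cI$. For $f\in\End(P)$, let $\Gamma_f$ be a standard $\cI$-admissible graph in $M$, and set $t_P(f)=\phi(\Gamma_f)$. The standard graphs are:
\begin{itemize}
\item[(i)] for $D^2\setminus\{x\}$ and $H_1$, the closure of $f$ around the puncture or the core;
\item[(ii)] for $D^2$, the right closure of $f$;
\item[(iii)] for $S^2$, $D^3$ and $S^3$, the closure of a ball containing $f$.
\end{itemize}
Linearity of $t_P$ is immediate. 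Conversely, given a trace family $t$, we define $\phi_t$ on an admissible graph $\Gamma$ as follows. Since the module is generated by $\cI$-colored graphs (Proposition above), we cut $\Gamma$ along an $\cI$-colored edge, isotope the rest into a box, evaluate the box with $\Pen_n$ to get some $f$, and put $\phi_t(\Gamma)=t_P(f)$.

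The first direction is to check that $t$ has the required trace property.
\begin{itemize}
\item Cyclicity: $\Gamma_{f\circ g}$ and $\Gamma_{g\circ f}$ are isotopic, by sliding a coupon around the loop in the annulus or solid torus. This gives (i).
\item On the disk, the right closure of $f\in\End(P\otimes X)$ is $\cI$-skein equivalent to the closure of $\mathrm{ptr}_r(f)$. This uses a box cutting only the $P$-strand, since $P\otimes X$ and $P$ both lie in $\cI$. This gives the right m-trace condition.
\item Left m-traces arise by isotoping the $X$ strand around the other side of the disk. Right and left m-traces therefore correspond to the same module, which gives the two isomorphisms in (ii).
\item On $S^2$, both isotopies are available, so the left and right conditions hold simultaneously and $t$ is an m-trace.
\item For $D^3$ and $S^3$, crossings are resolved by the braiding inside boxes. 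Moreover $\SM(S^2\times[0,1])\cong\SM(S^2)$, as remarked earlier, and filling in a ball does not change the quotient, since isotopies across the ball are already realized by sphere isotopies. This gives (iii).
\end{itemize}

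The main obstacle is showing that $\phi_t$ is well defined, i.e.\ independent of the choice of cut edge and of isotopy, and that it respects $\cI$-skein relations.
\begin{itemize}
\item Changing the cut point along one edge is exactly cyclicity.
\item Changing to a different $\cI$-edge amounts to rewriting the box as a partial trace. This is where the m-trace property enters in the disk and sphere cases. For the annulus, any two cut points are related by moving the cut around the core, so cyclicity suffices there.
\item Invariance under $\cI$-skein relations follows by choosing the cut edge inside the relating box, so that the two evaluations agree by functoriality of $\Pen_n$.
\end{itemize}
Finally, $\phi_{t_\phi}=\phi$ holds on generators by construction, and $t_{\phi_t}=t$ holds by evaluating the standard graphs. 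This establishes the bijections.
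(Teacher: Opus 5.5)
Your proof has a genuine gap in part (i), and the justification in part (ii) is wrong, though easily fixed. The paper does not prove this statement; it quotes it from \cite{admissible}. Its own version of the argument for (i) is the case $g=1$ of Proposition~\ref{prop:semitrace-correspondence}, and that case is exactly where your proposal breaks down.

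Your inverse map $\phi_t$ cuts $\Gamma$ at one point of an $\cI$-colored edge and boxes the rest. In $D^2\setminus\{x\}$ and $H_1$, however, the complement of a point is not a disk or ball. A $P$-loop and an $X$-loop both parallel to the core cannot be boxed at all. When the recipe does return a value, it forgets how $\Gamma$ sits relative to the core. An essential $P$-loop and a null-homotopic $P$-loop, each carrying $f$, give the same cut arc, so both are assigned $t_P(f)$.

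They must not be. Cut the annulus along an arc from the puncture to the boundary: the null-homotopic loop becomes skein equivalent to an essential $P\otimes P^*$-colored loop whose coupon factors through $\one$. Since $\one\notin\cI$, cyclicity cannot relate its value to $t_P(f)$. Concretely, take $\Rep(\bk[\bZ/p])$ over a field of characteristic $p$, $P$ the regular module, and $t_P$ the coefficient of $1$ in $\End(P)\cong\bk[y]/(y^p)$, extended to $\Proj$ as a cyclic trace. For $f=\id_P$ the correct values on the two loops are $1$ and $0$, so your $\phi_t$ is not skein invariant.

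The fix is to cut the \emph{manifold} rather than the graph:
\begin{enumerate}
\item Choose a proper arc $r$ from the puncture to the boundary (a meridian disk for $H_1$) meeting $\Gamma$ transversally and in at least one $\cI$-colored edge.
\item Read the complementary square as $F_\Gamma\in\End(V_1\otimes\cdots\otimes V_n)$, where $V_1\otimes\cdots\otimes V_n\in\cI$.
\item Set $\phi_t(\Gamma)=t_{V_1\otimes\cdots\otimes V_n}(F_\Gamma)$.
\end{enumerate}
Sliding $r$ across a coupon or across a local extremum of an edge, while keeping an $\cI$-colored intersection, changes $F_\Gamma$ by $A\circ B\mapsto B\circ A$ between objects of $\cI$. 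So independence of $r$ is exactly cyclicity. Merging the strands that cross $r$ shows every $\Gamma$ equals a standard loop $\Gamma_{F_\Gamma}$, and this is what gives $\phi_{t_\phi}=\phi$.

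In part (ii), no isotopy of $D^2$ takes the right closure of $f\in\End(X\otimes P)$ to the picture with the $X$-strand closed on the left, as $\mathrm{ptr}_l$ requires. That move passes through the point at infinity; it is precisely what $S^2$ adds, and your own (iii) bullet relies on it. If the move existed in $D^2$, every right m-trace would be a left m-trace. This fails for general pivotal categories; it is exactly the unibalancedness condition the paper imposes on Hopf algebras.

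So the $t$ you extract from $\phi\in\SM(D^2)^*$ via right closures is only a right m-trace, and the second isomorphism in (ii) is not the identity. Rotating the disk by $\pi$ is an isotopy, and it turns the right closure of $f$ into the left closure of $f^*$. Hence $t\mapsto t^\vee$, with $t^\vee_P(f)=t_{P^*}(f^*)$, is the bijection from right to left m-traces. This uses $P^*\in\cI$ and $\mathrm{ptr}_r(g^*)=\mathrm{ptr}_l(g)^*$ for $g\in\End(X\otimes P)$.

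Even in $D^2$, a single-point cut only works for an $\cI$-edge on the outer face. In general you must cut along a path to $\partial D^2$, evaluate $t$ on $\End(P\otimes W_1\otimes\cdots\otimes W_k)$, and use the right m-trace property to change paths.

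The rest is fine: your extraction of cyclicity and of the right m-trace property from $\phi$, and your treatment of $S^2$, $D^3$ and $S^3$, where the complement of a point really is a disk or ball.
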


\subsection{Computations for handles and handlebodies}\label{subsec:handlebodies}
Suppose $\cI$ is a tensor ideal of $\cC$ and is closed under direct sums (e.g., the projective ideal). Fix a set $R$ of representatives of the isomorphism classes of objects in $\cI$. We may partially order the objects of $R$ by $X\leq Y$ if $X\in R$ is a retract of $Y\in R$. Call an object $X\in R$ \textit{square-free} if $X$ contains each of its indecomposable direct summands at most once up to isomorphism. Then, we may express the skein modules $\SM(H_g)$ and $\SM(L(p,q))$ as limits over all isomorphism classes of objects in the category or just the square-free ones. 

\begin{theorem}\label{limit-skein-module-iso} 
    Suppose $\cI$ is a tensor ideal of $\cC$ and is closed under direct sums. We may express the skein module $\SM(H_g)$ of a genus $g$ handlebody $H_g$ as the following direct limits:
        \begin{align*}
            \SM(H_g) &\cong \lim_{\substack{\longrightarrow\\P\in R}} \End(P^{\otimes g}) / [\End(P^{\otimes g}), \End(P)^{\otimes g}]\\
            &\cong \lim_{\substack{\longrightarrow\\\text{square-free }\\ P\in R}} \End(P^{\otimes g}) / [\End(P^{\otimes g}), \End(P)^{\otimes g}].
        \end{align*}
    \end{theorem}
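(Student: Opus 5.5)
We will realize $H_g$ as a thickened disk with $g$ holes: $H_g\cong \Sigma_{0,g+1}\times[0,1]$, where $\Sigma_{0,g+1}$ is a disk $D$ with $g$ punctures. The $g$ one-handles are attached along a single $0$-handle ball $B$. For each $P\in R$ we define a map
\[
\Phi_P:\End(P^{\otimes g})\to\SM(H_g).
\]
Given $f$, insert a coupon colored by $f$ in $B$. Then close the $i$-th strand (colored $P$) around the $i$-th one-handle, so that $\Phi_P(f)$ is the ``handle closure'' of $f$. This graph is $\cI$-admissible since $P\in\cI$.

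The argument proceeds in four steps.
\begin{enumerate}[(1)]
\item \emph{Well-definedness on the quotient.} Take $a=a_1\otimes\cdots\otimes a_g\in\End(P)^{\otimes g}$. By isotopy, $a_i$ slides around the $i$-th handle, which gives $\Phi_P(af)=\Phi_P(fa)$. So $\Phi_P$ kills $[\End(P^{\otimes g}),\End(P)^{\otimes g}]$.
\item \emph{Compatibility with the direct system.} If $X\le Y$ via $\iota:X\to Y$ and $\pi:Y\to X$ with $\pi\iota=\id_X$, the transition map is
\[
f\mapsto \iota^{\otimes g}\, f\,\pi^{\otimes g}.
\]
Sliding $\pi$ around each handle shows $\Phi_Y(\iota^{\otimes g} f\pi^{\otimes g})=\Phi_X(f)$. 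The transition maps also depend only on the retraction up to the quotient: two choices $(\iota,\pi)$ and $(\iota',\pi')$ differ by conjugation by $\iota'\pi\in\End(Y)$ up to factors in $\End(Y)^{\otimes g}$, and this conjugation is trivial modulo commutators. The poset $R$ is directed, since $X,Y\le X\oplus Y$, and direct sums stay in $\cI$ by hypothesis. So we get a map $\Phi:\varinjlim\to\SM(H_g)$.
\item \emph{Surjectivity.} By the Proposition of \cite{admissible}, $\SM(H_g)$ is generated by $\cI$-colored graphs. Isotope such a graph so that it meets each cocore disk of the one-handles transversely, with at least one crossing per disk. If some disk is missed, fuse in a trivial $\cI$-strand using admissibility via an $\cI$-skein relation. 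The strands crossing the $i$-th cocore disk can be fused, by a skein relation inside a box around the disk, into a single strand colored $P_i=\bigotimes_k X_k$, possibly with duals. This lies in $\cI$ since $\cI$ is a tensor ideal. Now replace each $P_i$ by $P=\bigoplus_i P_i$ using $\iota_i,\pi_i$. Everything remaining sits in $B$, and evaluating it with $\Pen_3$ gives $f\in\End(P^{\otimes g})$ with $\Gamma=\Phi_P(f)$.
\item \emph{Injectivity.} We build an inverse $\Psi:\SM(H_g)\to\varinjlim$ by $\Gamma\mapsto[f]$ using the recipe in (3), and check independence of choices. A different fusion or retraction choice changes $f$ by conjugation or by elements of $\End(P)^{\otimes g}$ moved around the handles, which vanishes in the quotient. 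A different isotopy representative is handled by generic isotopies: moves away from the cocore disks are absorbed in $B$, and moves of strands across a cocore disk (handle slides of the graph through the disk) amount to $f\mapsto$ a conjugate by elements $\id\otimes\cdots\otimes a\otimes\cdots$. These pass through the stabilization $P\to P\oplus Q$. $\cI$-skein relations supported in a box can be isotoped into $B$ away from the disks, where they hold tautologically in $\End(P^{\otimes g})$.
\end{enumerate}

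This independence check in (4) is the main obstacle. Concretely, a strand passing through a cocore disk changes which object colors the handle, and this must be matched by a transition map in the limit. I would handle it by first passing to a larger $P$ that contains both colorings as retracts.

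For the square-free version, every $P\in R$ is a retract of some square-free $P'$ with the same indecomposable summands. Conversely, $P$ is $P'\otimes\bk^{m}$-multiplicity style, i.e. $P\cong\bigoplus Q_j^{n_j}$. For a cofinality argument, note that the square-free objects are not cofinal, so instead show directly that $\End(P^{\otimes g})/[\cdot,\cdot]\to\End(P'^{\otimes g})/[\cdot,\cdot]$ is an isomorphism, taking $P'=\bigoplus Q_j$. Writing $\id_P=\sum e_{jk}$ with idempotents $e_{jk}$ factoring through $Q_j$, any $f$ decomposes as $\sum (e\otimes\cdots)f(e\otimes\cdots)$. Each summand is, modulo commutators with $\End(P)^{\otimes g}$, equal to a term factoring through $P'^{\otimes g}$ by the matrix-unit trick, so it lies in the image from $P'$. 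This gives the second isomorphism.
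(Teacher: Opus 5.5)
Your argument for the first isomorphism is essentially the paper's. You define handle-closure maps that kill $[\End(P^{\otimes g}),\End(P)^{\otimes g}]$ and are compatible with retractions. Surjectivity comes from cutting and fusing strands into one large $P\in R$. Injectivity comes from enlarging $P$ so that every move across a cut becomes a commutator with a pure tensor in $\End(P)^{\otimes g}$.

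Two differences are worth noting. You cut along cocore disks in $H_g$, so you must absorb braiding and twisting of the intersection points through your ``fusion choice'' remark; the paper instead projects to $D^2_g$ and cuts along arcs. You also build an explicit inverse $\Psi$ instead of chasing a chain of moves, which handles linear skein relations more cleanly.

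Three small corrections to that part:
\begin{enumerate}
\item Making a cocore disk meet an $\cI$-colored strand is a finger-move isotopy of an $\cI$-colored edge, not a skein relation. You need it for every admissible graph, not just $\cI$-colored ones, so that $\Psi$ is defined on all generators of $\SM(H_g)$.
\item Moves across a disk are not conjugations. They are cyclic moves $cF\leftrightarrow Fc$ with $c$ a morphism between different objects on the $i$th factor.
\item After enlarging $P$, these become commutators with $\id\otimes\cdots\otimes\iota c\pi\otimes\cdots\otimes\id$.
\end{enumerate}

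The square-free part has a genuine gap. Your opening claim is backwards: $P'$ is a retract of $P$, not the other way round, as you implicitly concede when you note that square-free objects are not cofinal. More importantly, the idempotent and matrix-unit argument only shows that the transition map $E_{P',P}$ is \emph{surjective} on the quotients. That gives surjectivity of the comparison map from the square-free limit to the full limit, and nothing more.

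Because square-free objects are not cofinal, injectivity is a separate issue. Suppose a class at a square-free $P'$ vanishes after pushing to a non-square-free $Q\ge P'$. You must show it already vanishes at $Q'$, and that is exactly injectivity of $E_{Q',Q}$. You say you will show the map is an isomorphism, but you only produce preimages.

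The missing step is a map in the opposite direction that kills commutators, which the paper also constructs.
\begin{itemize}
\item Let $w_s:P\to P'$ and $w'_s:P'\to P$ be the matrix units moving the summand $s$ onto the copy of its type in $P'$. Put $W_a=w_{a_1}\otimes\cdots\otimes w_{a_g}$, and similarly $W'_a$.
\item Set $\rho(f)=\sum_a[W_afW'_a]\in\End(P'^{\otimes g})/[\End(P'^{\otimes g}),\End(P')^{\otimes g}]$. The sum must run over \emph{all} $g$-tuples $a$ of summands; summing only over diagonal blocks $U^{\otimes g}$ would miss, for example, the projection onto a $U\otimes V$ block.
\item For a pure tensor $x$, use $\sum_b W'_bW_b=\id$ and the fact that $W_bxW'_a\in\End(P')^{\otimes g}$:
$\rho(fx)=\sum_{a,b}[(W_afW'_b)(W_bxW'_a)]=\sum_{a,b}[(W_bxW'_a)(W_afW'_b)]=\rho(xf)$.
\item The same cyclic move gives $\rho\circ E_{P',P}=\id$, so $E_{P',P}$ is injective.
\end{itemize}
This is Morita invariance of $HH_0$ with coefficients. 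Together with your surjectivity argument, it makes the two limits agree.
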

    In particular, if $G$ is a minimal generator (equivalently, maximal square-free object)  of $\cI$, then 
    $$\SM(H_g) = \End(G^{\otimes g}) / [\End(G^{\otimes g}), \End(G)^{\otimes g}].$$
\begin{proof}
    The following proof has three components: (1) precisely defining the directed systems which we take the limit over, (2) showing the first isomorphism in the theorem statement, and (3) showing the second isomorphism in the theorem statement. The first and third steps are purely algebraic and fairly short. The bulk of the work, of course, lies in the topological construction used in part (2).
    
    (1) For $P, Q\in R$ with $P\leq Q$, we must define a directed system of linear maps 
    $$E^g_{P, Q}: \End(P^{\otimes g}) / [\End(P^{\otimes g}), \End(P)^{\otimes g}]\to \End(Q^{\otimes g}) / [\End(Q^{\otimes g}), \End(Q)^{\otimes g}].$$ 
    Let $\iota_{P, Q}:P\incl Q$ and $\pi_{P, Q}:Q\twoheadrightarrow P$ be inclusions and projections respectively. Because $\otimes$ is exact in any monoidal category, $\iota_{P, Q}^{\otimes g}$ and $\pi_{P, Q}^{\otimes g}$ are also inclusions and projections. We may define $\tilde E_{P, Q}^{g}:\End(P^{\otimes g})\to \End(Q^{\otimes g})$ by
    $$\tilde E_{P, Q}^{g}:F\mapsto \iota_{P, Q}^{\otimes g}\circ F\circ \pi_{P, Q}^{\otimes g}.$$
    Note the following properties about $\tilde E_{P, Q}^{g}$: 
    \begin{itemize}
        \item $\tilde E_{P, Q}^{g}$ is linear
        \item $\tilde E_{P, Q}^{g}(F\circ G) = \tilde E_{Q, T}^{g}(F)\circ \tilde E_{P, T}^{g}(G)$,
        \item $\tilde E_{P, Q}^{g_1}\otimes \tilde E_{P, Q}^{g_2} = \tilde E_{P, Q}^{g_1+g_2}|_{\End(P^{\otimes g_1})\otimes \End(Q^{\otimes g_2})} $.
    \end{itemize}
    It follows from these properties that 
    $$\tilde E_{P, Q}^{g}([F, f_1\otimes\dots\otimes f_g]) = [\tilde E_{P, Q}^{g}(F), \tilde E_{P, Q}^{1}(f_1)\otimes\dots\otimes \tilde E_{P, Q}^{1}(f_g)].$$ 
    Therefore, $\tilde E_{P, Q}^{g}$ restricts to a map $[\End(P^{\otimes g}), \End(P)^{\otimes g}]\to [\End(Q^{\otimes g}), \End(Q)^{\otimes g}]$. It follows that there is a map $E^g_{P, Q}$ with the source and target described such that the following diagram commutes:
    $$\begin{tikzcd}
        \End(P^{\otimes g})\arrow[r,"\tilde E^g_{P, Q}"]\arrow[dd,two heads,"\mathrm{quot.}"'] & \End(Q^{\otimes g})\arrow[dd,two heads,"\mathrm{quot.}"]\\\\
        \End(P^{\otimes g}) / [\End(P^{\otimes g}), \End(P)^{\otimes g}]\arrow[r, dashed, "E^g_{P, Q}"'] & \End(Q^{\otimes g}) / [\End(Q^{\otimes g}), \End(Q)^{\otimes g}].
    \end{tikzcd}$$
    Observe that regardless of the choice of inclusion/projection pair $\iota_{P, Q}:P\incl Q$ and $\pi_{P, Q}:Q\twoheadrightarrow P$, the map $E_{P, Q}$ is the same. Inside $\End(P^{\otimes g}) / [\End(P^{\otimes g}), \End(P)^{\otimes g}]$, we have the following string of equalities:
    $$[\iota_{P, Q}^{\otimes g}\circ F\circ \pi_{P, Q}^{\otimes g}] = [\iota_{P, Q}^{\otimes g}\circ \hat\pi_{P, Q}^{\otimes g}\circ \hat\iota_{P, Q}^{\otimes g}\circ F\circ \pi_{P, Q}^{\otimes g}] = [\hat\iota_{P, Q}^{\otimes g}\circ F\circ \pi_{P, Q}^{\otimes g}\circ \iota_{P, Q}^{\otimes g}\circ \hat\pi_{P, Q}^{\otimes g}] = [\hat\iota_{P, Q}^{\otimes g}\circ F\circ \hat\pi_{P, Q}^{\otimes g}],$$
    so the limit is well-defined. Our goal now is to show that $\SM(H_g)$ is the limit of the $E^g_{P, Q}$ over both $R$ and square-free objects in $R$.

 \begin{figure}[ht]
    \centering
  $\vcenter{\hbox{\begin{overpic}[unit=1mm, scale = .07]{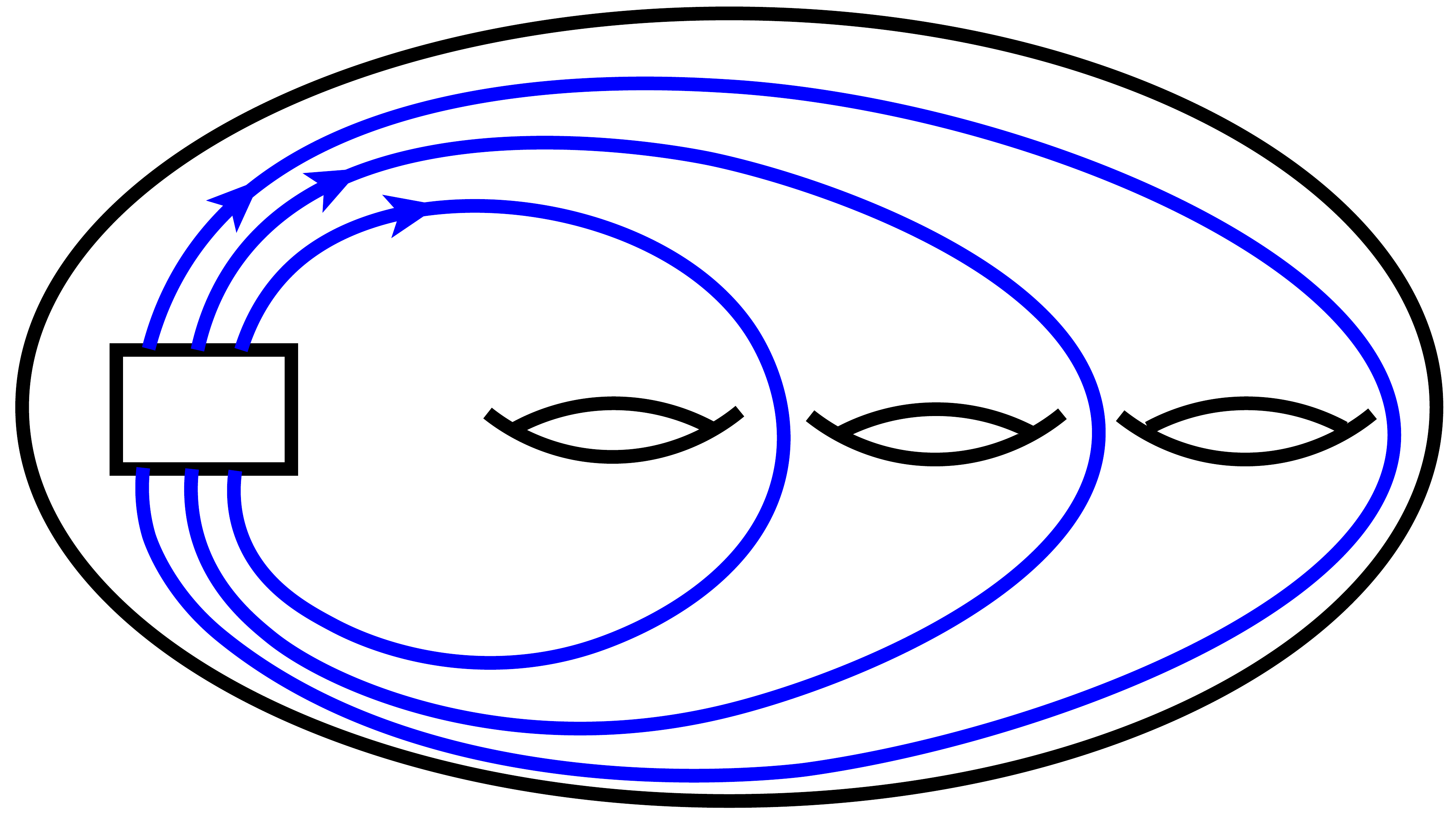}
  \put(5.3,12.1){\footnotesize{$F$}}
  \put(32,14.1){\tiny{$P$}}
\put(21.5,14.1){\tiny{$P$}}
\put(41,14.1){\tiny{$P$}}
  \end{overpic} }}  $
    \caption{Given a morphism $F\in \End(P^{\otimes g})$, we may obtain an element of $\SM(H_g)$ by tracing around each hole. The example of $g=3$ is illustrated in this figure.} 
    \label{fig:end-skein-projection}
\end{figure} 

\begin{figure}[ht]
    \centering
$\vcenter{\hbox{\begin{overpic}[scale=.35]{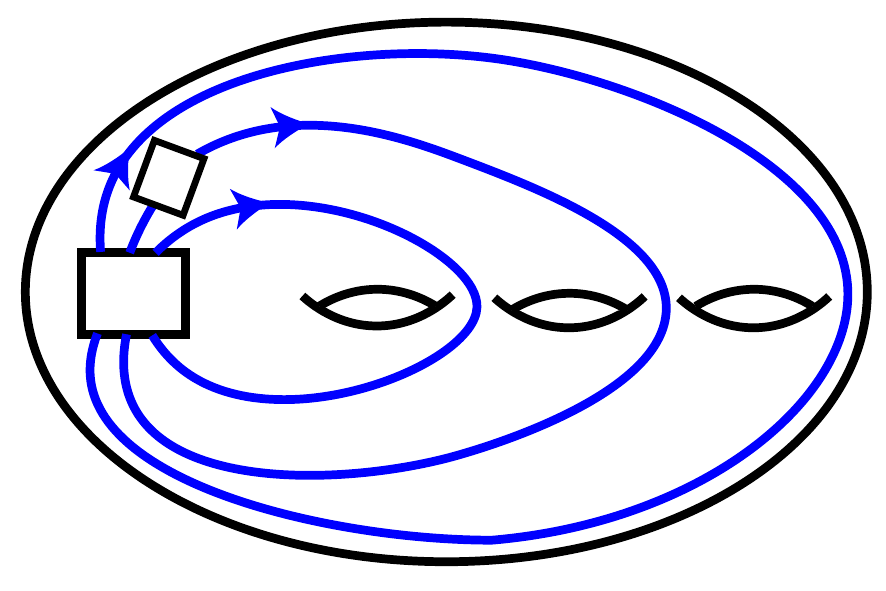}
\put(20,47){\tiny{$F$}}
\put(26,67.5){\tiny{$f$}}
\put(55,83){\tiny{$P$}}
\put(55,70){\tiny{$P$}}
\put(45,57){\tiny{$P$}}
\end{overpic}}} =
\vcenter{\hbox{\begin{overpic}[scale=.35]{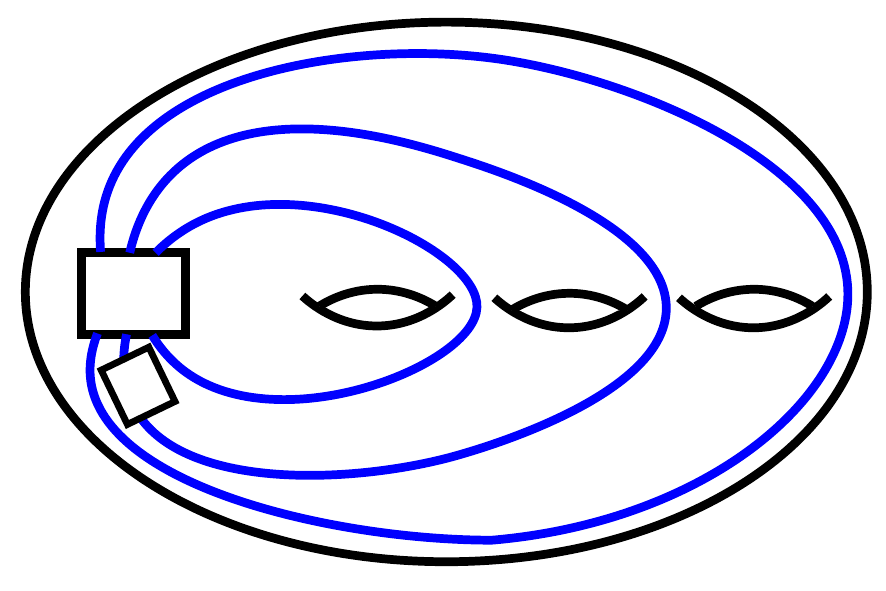}
\put(20, 47){\tiny{$F$}}
\put(21,32){\tiny{$f$}}
\end{overpic}}}$
\vspace*{2mm}
    \caption{A coupon on an admissible ribbon graph may loop around a hole in $H_g$ without changing the element of $\SM(H_g)$.}
    \label{fig:loop-coupon-around}
\end{figure}
    
    (2) For each $P\in R$, there is a linear map $p_{P}:\End(P^{\otimes g})\to \SM(H_g)$, seen in Figure \ref{fig:end-skein-projection}. As visualized in Figure \ref{fig:loop-coupon-around}, this satisfies the property that
    $$p_{P}((f_1\otimes \dots\otimes f_g)\circ F) = p_{Q}(F\circ(f_1\otimes \dots\otimes f_g))$$
    for any choice of $f_i:Q\to P$ and $F:P^{\otimes g}\to Q^{\otimes g}$. This implies that
    $$[\End(P^{\otimes g}), \End(P)^{\otimes g}]\subseteq \ker p_{P},$$
    so $p_{P}$ factors through a map $p'_{P}:
    $$\End(P^{\otimes g})/[\End(P^{\otimes g}), \End(P)^{\otimes g}]\to \SM(H_g)$. Moreover, if $P$ is a retract of $Q$ with projection $\pi_{P,Q}:Q\to P$ and inclusion $\iota_{P,Q}:P\to Q$, then, for any $F\in \End(P^{\otimes g})$,
    $$p_{P}(F) = p_{P}(\pi_{P, Q}^{\otimes g}\circ\iota_{P, Q}^{\otimes g}\circ F) = p_{Q}(\pi_{P, Q}^{\otimes g}\circ F\circ\pi_{P, Q}^{\otimes g}) = (p_{Q}\circ \tilde E^g_{P, Q})(F).$$
    Thus, by the universal property of direct limits, there is a map 
    $$p':\lim_{\substack{\longrightarrow \\P\in R}}\End(P^{\otimes g})/[\End(P^{\otimes g}), \End(P)^{\otimes g}]\to \SM(H_g).$$
    For the rest of step (2), denote by $[F]$ the image of an element $F\in\End(P^{\otimes g})$ in the limit
    $\displaystyle\lim_{\substack{\longrightarrow \\P\in R}}\End(P^{\otimes g})/[\End(P^{\otimes g}), \End(P)^{\otimes g}].$

\begin{figure}[ht]
  $$\Gamma \;=\;
  \vcenter{\hbox{\begin{overpic}[abs, unit=1mm, scale = .2]{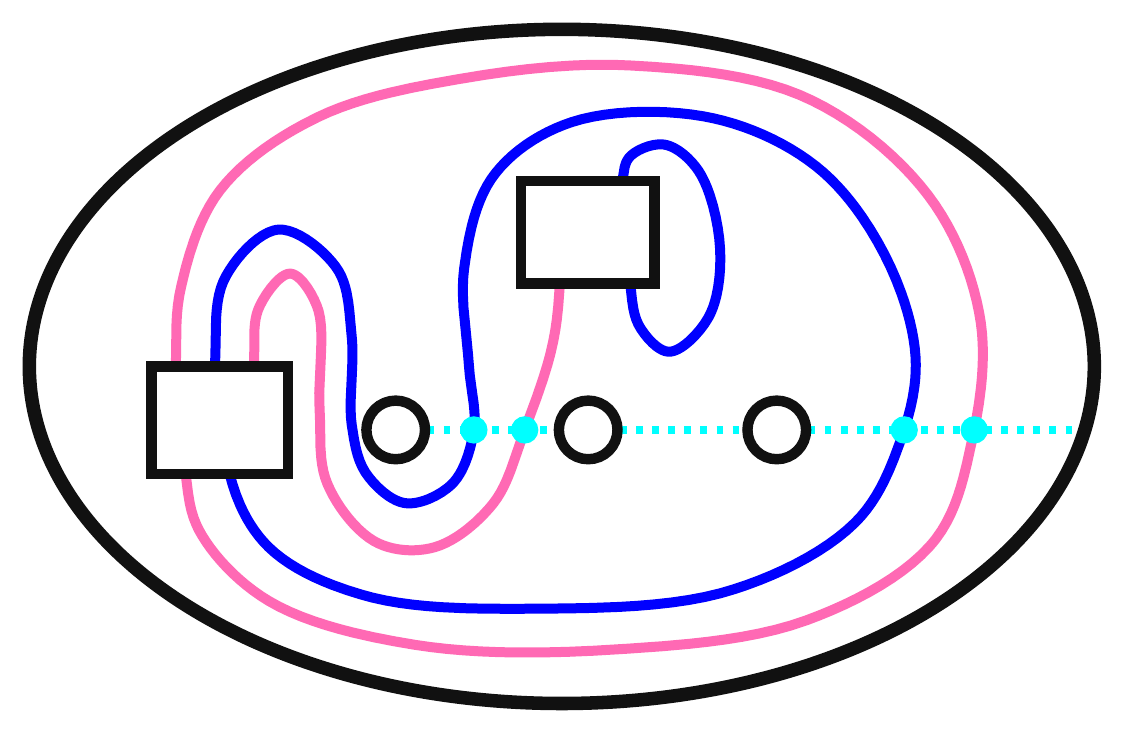}
    \put(7.5,10.6){\makebox(0,0){\footnotesize{$g_1$}}}
    \put(19.9,17.0){\makebox(0,0){\footnotesize{$g_2$}}}
    \put(4.0,15.4){\makebox(0,0){\scriptsize{$X_1$}}}
    \put(11.6,18.6){\makebox(0,0){\scriptsize{$P_1$}}}
    \put(17.9,6.6){\makebox(0,0){\scriptsize{$X_2$}}}
    \put(26.2,16.4){\makebox(0,0){\scriptsize{$P_2$}}}
  \end{overpic} }}
  \;=\;
  \vcenter{\hbox{\begin{overpic}[abs, unit=1mm, scale = .2]{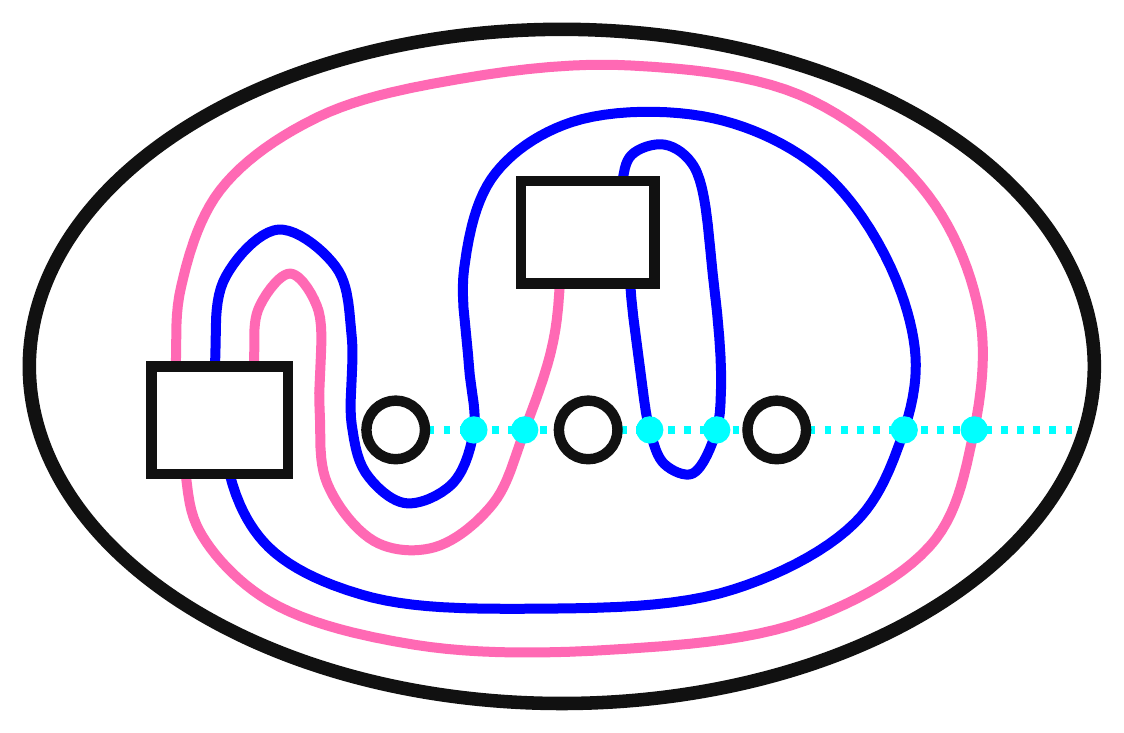}
    \put(7.5,10.6){\makebox(0,0){\footnotesize{$g_1$}}}
    \put(19.9,17.0){\makebox(0,0){\footnotesize{$g_2$}}}
    \put(4.0,15.4){\makebox(0,0){\scriptsize{$X_1$}}}
    \put(11.6,18.6){\makebox(0,0){\scriptsize{$P_1$}}}
    \put(17.9,6.6){\makebox(0,0){\scriptsize{$X_2$}}}
    \put(26.2,16.4){\makebox(0,0){\scriptsize{$P_2$}}}
  \end{overpic} }} =\;
  \vcenter{\hbox{\begin{overpic}[abs, unit=1mm, scale = .2]{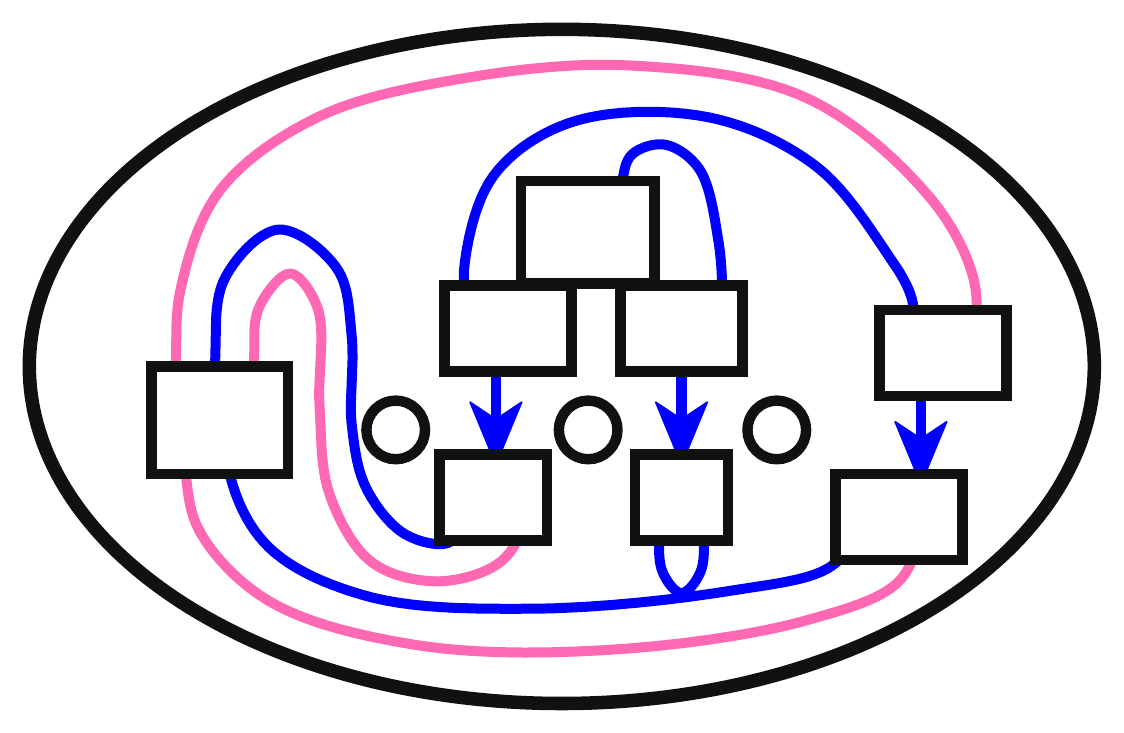}
    \put(7.5,10.6){\makebox(0,0){\footnotesize{$g_1$}}}
    \put(19.9,17.0){\makebox(0,0){\footnotesize{$g_2$}}}
    \put(17.2,13.7){\makebox(0,0){\scriptsize{$\pi_1$}}}
    \put(23.1,13.7){\makebox(0,0){\scriptsize{$\pi_2$}}}
    \put(31.9,12.9){\makebox(0,0){\scriptsize{$\pi_3$}}}
    \put(16.7,7.9){\makebox(0,0){\scriptsize{$\iota_1$}}}
    \put(23.1,7.9){\makebox(0,0){\scriptsize{$\iota_2$}}}
    \put(30.5,7.3){\makebox(0,0){\scriptsize{$\iota_3$}}}
    \put(4.0,15.4){\makebox(0,0){\scriptsize{$X_2$}}}
    \put(11.6,18.6){\makebox(0,0){\scriptsize{$P_1$}}}
    \put(20.0,6.6){\makebox(0,0){\scriptsize{$X_3$}}}
    \put(26.2,17.2){\makebox(0,0){\scriptsize{$P_2$}}}
  \end{overpic} }}$$
  $$\qquad F_\Gamma \;=\;
  \vcenter{\hbox{\begin{overpic}[abs, unit=1mm, scale = .2]{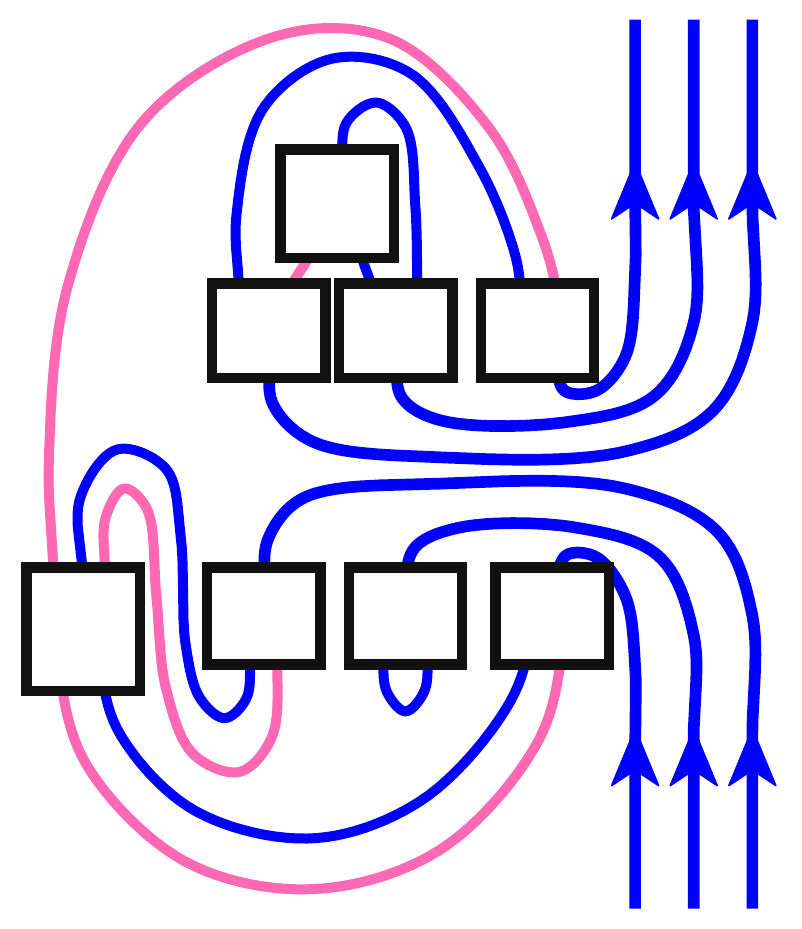}
 \put(11.4,24.5){\makebox(0,0){\scriptsize{$g_2$}}}
    \put(2.8,10.1){\makebox(0,0){\scriptsize{$g_1$}}}
    \put(9.1,20.2){\makebox(0,0){\scriptsize{$\pi_1$}}}
    \put(13.4,20.2){\makebox(0,0){\scriptsize{$\pi_2$}}}
    \put(18.2,20.2){\makebox(0,0){\scriptsize{$\pi_3$}}}
    \put(8.9,10.6){\makebox(0,0){\scriptsize{$\iota_1$}}}
    \put(13.7,10.6){\makebox(0,0){\scriptsize{$\iota_2$}}}
    \put(18.7,10.6){\makebox(0,0){\scriptsize{$\iota_3$}}}
  \end{overpic} }}\;.$$
 \caption{An example of the manipulations to take an admissible graph $\Gamma$ on a thrice-punctured disk and produce an element $F_\Gamma$ of $\End(P^{\otimes g})$ for some $P\in R$, such that its image under the map given in Figure \ref{fig:end-skein-projection} is $\Gamma$.}
    \label{fig:handlebody-skein-manipulations}
\end{figure}

    In the same fashion as the definition of the Reshetikhin-Turaev functor $\Pen_3$ and as in \cite[Corollary 3.2]{admissible}, any $\cI$-admissible ribbon graph $\Gamma$ may be mapped to an $\cI$-admissible graph in the $g$-punctured disk $D^2_g$. As before, we replace crossings and twists with coupons colored by the braiding and twist. This defines an isomorphism $\SM(H_g)\cong \mathcal S_{\mathcal I}(D^2_g)$, so we conflate an element $\sum [\Gamma]\in \SM(H_g)$ with the corresponding element in $\SM(D^2_g)$.
    
    We now show that $p'$ is surjective. Let $\Gamma$ be an admissible graph in $D^2_g$. Choose non-intersecting cuts $r_i$ such that, for $i=1,\dots, g-1$, $r_i$ is a line segment from the $i$th puncture to the $(i+1)$st puncture, and $r_g$ is a line segment from the $g$th puncture to the boundary. We may perform an isotopy on $\Gamma$ so that each cut $r_i$ intersects $\Gamma$ transversely, does not intersect any coupons of $\Gamma$, and intersects with at least one $\cI$-colored edge of $\Gamma$. For each cut $r_i$, let $P_i$ be the ordered tensor product of the objects coloring strands passing through $r_i$. Choose $P\in R$ so that $P^*$ contains $P_1\oplus\dots\oplus P_g$ as a direct summand (such a $P$ exists because tensor ideals are closed under duals), and let $\pi_i:P^*\to P_i$ and $\iota_i:P_i\to P^*$ be the projection and inclusion respectively. For each $i$, split the edges passing through $r_i$ by inserting $\pi_i$ above $r_i$ and $\iota_i$ below $r_i$. Of course, the resulting $\cC$-colored graph is equal to $\Gamma$ up to $\cI$-skein relations, so they agree as elements of $\SM(D^2_g)$. Since $D^2_g\setminus\bigcup r_i$ is contractible and homeomorphic to a square, the skein contained in $D^2_g\setminus \{r_1,\dots,r_g\}$ is equivalent in $\cI$ to a morphism $F_\Gamma \in \End(P^{\otimes g})$ such that $p'([F_{\Gamma}]) = p_{P}(F_\Gamma) = [\Gamma]$, so $p'$ is surjective because the set of admissible ribbon graphs spans the admissible skein module $\SM(H_g)$. (The process described here may be seen visually in Figure \ref{fig:handlebody-skein-manipulations}.)
    
    To complete step (2), we show that $p'$ is injective. Suppose $F\in \End(P^{\otimes g})$, $F'\in \End(Q^{\otimes g})$, and $p'([F])=p'([F'])$. Then, the ribbon graphs $\Gamma$ and $\Gamma'$ representing $p_{P}(F)$ and $p_{Q}(F')$ as in Figure \ref{fig:end-skein-projection} differ by a sequence of skein relations and isotopies. Write this sequence of admissible graphs as $\Gamma = \Gamma_0, \Gamma_1, \dots, \Gamma_{n} = \Gamma'$. Choose $Z$ so that every $[F_{\Gamma_k}]$ is equivalent to an element of $\End(Z^{\otimes g})/[\End(Z^{\otimes g}), \End(Z)^{\otimes g}]$ in the limit, and assume without loss of generality that $F_{\Gamma_k}$ is this morphism and $P = Q = Z$. Note that $F_{\Gamma} = F$ and $F_{\Gamma'} = F'$.
    
    Suppose $\Gamma_k$ and $\Gamma_{k+1}$ differ by a single skein relation. That is, there is a square $[0,1]^2\incl D^2_g$ with the following properties:
    \begin{itemize} 
        \item $\Gamma_k|_{([0,1]^2)^c\cup \partial [0,1]^2} = \Gamma_{k+1}|_{([0,1]^2)^c\cup \partial [0,1]^2}$,
        \item the boundary $\partial [0,1]^2$ intersects no coupons in $\Gamma_k$ or $\Gamma_{k+1}$,
        \item at the top and bottom of $[0,1]^2$, $\partial [0,1]^2$ intersects at least one edge labeled by objects in $\cI$ transversely,
        \item the rest of the boundary of $\partial [0,1]^2$ intersects no edges,
        \item $\Pen_2(\Gamma_k\cap [0,1]^2)=\Pen_2(\Gamma_{k+1}\cap [0,1]^2)$ as morphisms in $\cI$.
    \end{itemize}  
    Because $[0,1]^2$ is contractible, we may assume without loss of generality that $[0,1]^2$ is disjoint from the cuts $r_i$. In this case, $F_{\Gamma_k} = F_{\Gamma_{k+1}}$. 
    
    Now, suppose $\Gamma_k$ and $\Gamma_{k+1}$ differ by an isotopy of a single coupon or edge. If the isotopy does not make changes around any cut $r_i$, then $F_{\Gamma_k} = F_{\Gamma_{k+1}}$ because the relation corresponding to the isotopy can be made internally to $\End(P^{\otimes g})$. Without loss of generality, now assume the isotopy interacts with a single cut $r_i$ (if it interacts with several cuts, then we can split the isotopy into several steps, one cut at a time). Observe that $D^2_g\setminus \{r_1,\dots, r_{i-1},r_{i+1},\dots, r_g\}$ is a once-punctured disk (as in \cite{admissible}), so we obtain that $F_{\Gamma_k}$ and $F_{\Gamma_{k+1}}$ are related by a cyclic relation on the $i$th component as in Figure \ref{fig:loop-coupon-around}, which is killed in the quotient so that $[F_{\Gamma_k}] = [F_{\Gamma_{k+1}}]$. This handles all moves which can induce $p'([F]) = p'([F'])$, so $p'$ is an isomorphism: 
    $$\SM(H_g) \cong \lim_{\substack{\longrightarrow\\P\in R}} \End(P^{\otimes g}) / [\End(P^{\otimes g}), \End(P)^{\otimes g}].$$
    
    (3) Finally, we show that the limit over square-free $P$ yields the same limit. Given $P\in R$, let $P'\in R$ denote the square-free object containing the same indecomposables (without repeats). Of course, for any $P, Q\in R$, there is a map $\psi:\lim_{\text{square-free}} E^g_{P', Q'}\to \lim_{R}E^g_{P, Q}.$ 
    Given a morphism $F\in\End(P^{\otimes g})$ and $U\in R$ an indecomposable retract of $P$, if $n_U$ copies of $U$ are contained in $P$, let
    $$F_U^i = \iota_{U, P'}^{\otimes g}\circ \pi_{U,P}^{\otimes g} \circ F\circ \iota_{U,P}^{\otimes g}\circ\pi_{U, P'}^{\otimes g}.$$
    for $i=1,\dots, n_U$. Then, it is easy to see $[F]=[\sum_{U,i} F_U^i]$. We can therefore define a map $\End(P^{\otimes g})\to \lim_{\text{square-free}} E^g_{P', Q'}$ by $F\mapsto \sum_{U,i} [F_U^i]$, which is easily seen to be well-defined. This again preserves commutators, so we get a map $\lim_{R}E^g_{P, Q}\to \lim_{\text{square-free}} E^g_{P', Q'}$, and this is of course inverse to $\psi$.
\end{proof}

In the case that $\cI$ is not closed under direct sums, there is still a formula for the admissible skein module. Let $\mathcal P$ be the poset category of objects in $\cI$ ordered by $P\leq Q$ if $P$ is a retract of $Q$. The skein module $\SM(H_g)$, in this case, is isomorphic to the following quotient: 
$$\frac{\underset{P_1,\dots, P_g\in {\mathcal P}^g}{\colim}\End(P_1\otimes\dots\otimes P_g)}{[\colim\Hom(P_1\otimes\dots\otimes P_g\to Q_1\otimes\dots\otimes Q_g),\colim\Hom(Q_1\to P_1)\otimes \dots\otimes\Hom(Q_g\to P_g)]}.$$
The dual of the admissible skein module $\SM(H_g)$ may be expressed more cleanly in this case. We describe $\SM(H_g)^*$ families of trace-like functions on $\cI$. This aligns with our expectations because the skein module $\SM(H_1)$ of the solid torus satisfies $\SM(H_1)^*\cong \{\text{cyclic traces on }\cI\}$.

    \begin{remark}
        From stacking, we have an induced (non-unital) associative algebra structure on the skein module for a genus $g$ handlebody. Let $P, Q\in R$. For two morphisms $f\in \End(P^{\otimes g})$ and $g\in \End(Q^{\otimes g})$, letting $\phi:P \otimes Q\to W$ be an isomorphism for some representative $W\in R$. Further, let $\sigma: P^{\otimes g} \otimes Q^{\otimes g}\to (P \otimes Q)^{\otimes g}$ denote the map that sends each $P$ above $Q$ at each crossing. Then, letting $[\cdot ]$ denote the equivalence classes represented by a map, $[f]\cdot [g]=[\phi\circ \sigma \circ (f\otimes g) \circ \sigma^{-1} \circ \phi^{-1}]$. This defines a well-defined algebra structure since any morphism applied to a single strand can slide past $\sigma$ and the merging morphism cancels with its inverse in the skein module.
    \end{remark}

\begin{figure}[ht]
  \centering
  \providecommand{\id}{\mathrm{id}}
  $[f] \cdot [g] \;=\;
  \left[\;
  \vcenter{\hbox{\begin{overpic}[abs, unit=1mm, scale = .25]{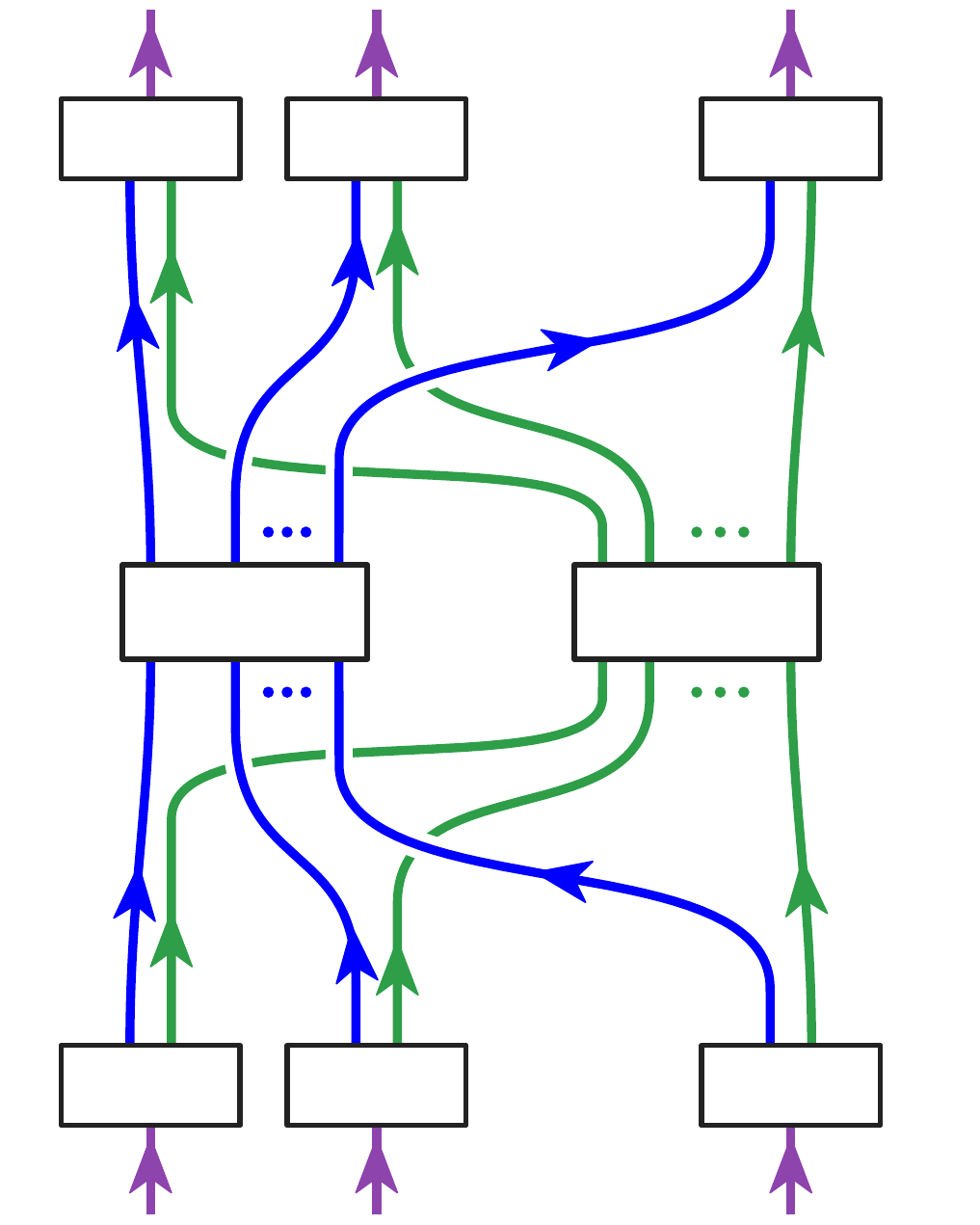}
      \put(8.7,41.5){\makebox(0,0)[l]{\scriptsize $Q$}}
      \put(8.7,12.3){\makebox(0,0)[l]{\scriptsize $Q$}}
      \put(18.6,42.7){\makebox(0,0)[l]{\scriptsize $Q$}}
      \put(18.6,11.1){\makebox(0,0)[l]{\scriptsize $Q$}}
      \put(36.5,39.2){\makebox(0,0)[l]{\scriptsize $Q$}}
      \put(36.5,14.6){\makebox(0,0)[l]{\scriptsize $Q$}}
      \put(4.8,39.4){\makebox(0,0)[r]{\scriptsize $P$}}
      \put(4.8,14.4){\makebox(0,0)[r]{\scriptsize $P$}}
      \put(14.4,42.1){\makebox(0,0)[r]{\scriptsize $P$}}
      \put(14.4,11.6){\makebox(0,0)[r]{\scriptsize $P$}}
      \put(25.3,40.3){\makebox(0,0){\scriptsize $P$}}
      \put(25.3,13.5){\makebox(0,0){\scriptsize $P$}}
      \put(7.2,51.4){\makebox(0,0)[l]{\tiny $W$}}
      \put(6.6,47.7){\makebox(0,0){\scriptsize $\phi$}}
      \put(7.4,2.4){\makebox(0,0)[l]{\tiny $W$}}
      \put(6.6,6.1){\makebox(0,0){\scriptsize $\phi^{-1}$}}
      \put(17.7,51.4){\makebox(0,0)[l]{\tiny $W$}}
      \put(16.5,47.7){\makebox(0,0){\scriptsize $\phi$}}
      \put(17.7,2.4){\makebox(0,0)[l]{\tiny $W$}}
      \put(16.5,6.1){\makebox(0,0){\scriptsize $\phi^{-1}$}}
      \put(35.9,51.4){\makebox(0,0)[l]{\tiny $W$}}
      \put(34.7,47.7){\makebox(0,0){\scriptsize $\phi$}}
      \put(35.9,2.4){\makebox(0,0)[l]{\tiny $W$}}
      \put(34.7,6.1){\makebox(0,0){\scriptsize $\phi^{-1}$}}
      \put(10.8,26.9){\makebox(0,0){\footnotesize $f$}}
      \put(30.6,26.9){\makebox(0,0){\scriptsize $g$}}
  \end{overpic} }}
  \;\right]$
  \caption{Figure illustrating the algebra structure of the handlebody skein module.} %
  \label{fig:Algebra Structure}
\end{figure}

\begin{definition}
    A $g$-semitrace $t$ on a tensor ideal $\cI\subseteq \cC$ is a family of linear functions 
    $$\{t_{V_1,\dots,V_g}:\End(V_1\otimes\dots\otimes V_g)\to \bk\mid V_1,\dots, V_g\in \cI\},$$ 
    which satisfies the following property:
    \begin{itemize}
        \item $t_{V_1,\dots,V_g}((f_1\otimes \dots\otimes f_g)\circ F) = t_{W_1,\dots,W_g}(F\circ(f_1\otimes \dots\otimes f_g))$ for any $f_i:W_i\to V_i$ and $F:V_1\otimes\dots\otimes V_g\to W_1\otimes\dots\otimes W_g$.
    \end{itemize}
\end{definition}

\begin{prop}\label{prop:semitrace-correspondence}
    Let $\cI$ be any tensor ideal of $\cC$. The admissible skein module of the genus $g$ handlebody $H_g$ satisfies
      $$\SM(H_g)^*\cong \{g\text{-semitraces on }\cI\}.$$
\end{prop}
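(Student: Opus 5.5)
We construct mutually inverse linear maps between $\SM(H_g)^*$ and the space of $g$-semitraces, mirroring the proof of Theorem~\ref{limit-skein-module-iso}. We do not assume $\cI$ is closed under direct sums, so we work with tuples $(V_1,\dots,V_g)$ directly rather than with a single object $P^{\otimes g}$.

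First, given $\phi\in\SM(H_g)^*$, we define for each $V_1,\dots,V_g\in\cI$ a map
$$p_{V_1,\dots,V_g}:\End(V_1\otimes\dots\otimes V_g)\to\SM(H_g).$$
It sends $F$ to the ribbon graph obtained by closing the $i$th strand around the $i$th handle, exactly as in Figure~\ref{fig:end-skein-projection} but with distinct colors. The resulting graph is admissible since every strand is colored in $\cI$. Set $t_{V_1,\dots,V_g}=\phi\circ p_{V_1,\dots,V_g}$. The $g$-semitrace property
$$t_{V_\bullet}((f_1\otimes\dots\otimes f_g)\circ F)=t_{W_\bullet}(F\circ(f_1\otimes\dots\otimes f_g))$$
follows from the isotopy of Figure~\ref{fig:loop-coupon-around}, which slides each coupon $f_i$ once around the $i$th hole. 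This step is routine.

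Conversely, given a $g$-semitrace $t$, we define $\phi_t$ on admissible graphs in $D^2_g$, using the identification $\SM(H_g)\cong\SM(D^2_g)$ recalled in the proof of Theorem~\ref{limit-skein-module-iso}. Given an admissible graph $\Gamma$, choose cuts $r_1,\dots,r_g$ as in that proof, each meeting an $\cI$-colored edge. Let $V_i$ be the ordered tensor product of the colors crossing $r_i$; each $V_i$ lies in $\cI$ because $\cI$ is a tensor ideal. Cutting along the $r_i$ produces a square, which yields $F_\Gamma\in\End(V_1\otimes\dots\otimes V_g)$. Set $\phi_t(\Gamma)=t_{V_1,\dots,V_g}(F_\Gamma)$. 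Here we need no projections onto a common $P$, since the semitrace is defined on arbitrary tuples.

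The main obstacle is well-definedness of $\phi_t$. We must show independence of:
\begin{itemize}
\item the isotopy positioning $\Gamma$ relative to the cuts,
\item the choice of cuts, and
\item $\cI$-skein relations.
\end{itemize}
We follow the injectivity argument of Theorem~\ref{limit-skein-module-iso}. Skein relations can be pushed off the contractible complement of the cuts, so they do not change $F_\Gamma$. An isotopy that moves a coupon or strand across a single cut $r_i$ changes $F_\Gamma$ by a cyclic move on the $i$th tensor factor. This move has the form $(\id\otimes\cdots\otimes f_i\otimes\cdots\otimes\id)\circ F\leftrightarrow F\circ(\id\otimes\cdots\otimes f_i\otimes\cdots\otimes\id)$ with $f_i:W_i\to V_i$, and it is invariant precisely by the semitrace axiom with all other $f_j=\id$.

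Some care is needed for a strand that crosses $r_i$ several times, or that is created or annihilated at the cut by a cap or cup. We handle this by absorbing the cap or cup into a morphism $f_i$ between the two tensor products of colors. Any two admissible systems of cuts are related by such local moves, since each cut is determined up to isotopy in the punctured disk, so the choice of cuts does not matter either.

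Finally, we check that the two constructions are inverse. First, $\phi_t\circ p_{V_\bullet}=t_{V_\bullet}$, because the standard closure of $F$ has $F_\Gamma=F$ for the obvious cuts. Second, starting from $\phi$, we have $\phi_{t_\phi}(\Gamma)=\phi(p(F_\Gamma))=\phi(\Gamma)$, because $p(F_\Gamma)=\Gamma$ in the skein module, exactly as in the surjectivity step of Theorem~\ref{limit-skein-module-iso}. Both assignments are linear, so this gives the isomorphism
$$\SM(H_g)^*\cong\{g\text{-semitraces on }\cI\}.$$
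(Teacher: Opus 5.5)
Your overall strategy matches the paper's: cut $D^2_g$ along arcs into a disk, evaluate $t$ on $F_\Gamma$, match moves across a cut with the semitrace axiom, and invert via a closure map. As written, however, the two halves of your argument use incompatible data, and the statement you use to reconcile them is false.

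You define $\phi_t$ with the chain cuts of Theorem~\ref{limit-skein-module-iso}, while $p_{V_\bullet}$ closes the $i$th strand around the $i$th hole only. The identities $F_{p(F)}=F$ and $p(F_\Gamma)=\Gamma$ hold only when $p$ is the closure \emph{dual} to the chosen cuts, i.e.\ the $i$th strand crosses $r_i$ once and no other cut. For the chain cuts, such a loop must enclose punctures $1,\dots,i$, so the dual closure is nested, not yours.

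Concretely, take $g=2$ and let $\Gamma$ be a $P$-colored loop around the second puncture alone. The chain cuts produce an $F_\Gamma$ on $P\otimes P^*$ (up to order) that factors through $\one$, as an evaluation followed by a coevaluation. Closing it your way gives a single loop homologous to $\pm([a_1]-[a_2])$, where $a_i$ encircles the $i$th puncture, rather than to $[a_2]$. So $p(F_\Gamma)\neq\Gamma$ in general, and likewise $\phi_t(p(F))\neq t(F)$.

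Your bridge claims that any two cut systems are related by local moves because each cut is unique up to isotopy. This fails. For $g\geq 3$, arcs from a puncture to $\partial D^2_g$ fall into infinitely many isotopy classes. Moreover, if $h$ is an orientation-preserving homeomorphism of $D^2_g$ fixing each puncture, the functional built from $h(r_1),\dots,h(r_g)$ is the one built from $r_1,\dots,r_g$ precomposed with $h^{-1}_*$. The mapping class group acts nontrivially on $\SM(D^2_g)$ in general, so these functionals differ. The construction therefore genuinely depends on the isotopy class of the cut system; only independence within one class is true. That is all the paper's one-segment-at-a-time argument gives, since in the annulus left by the other cuts the $i$th arc is unique up to isotopy.

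The repair is what the paper does. Fix, once and for all, radial cuts from each puncture to the outer boundary; their dual closure is exactly your $p$ (each strand through its own handle once). Then you never compare different cut systems. You need only invariance under skein relations and under isotopies of $\Gamma$ relative to the fixed cuts. Your cyclic-move argument, including absorbing cups and caps into some $f_i$, provides this, and both inverse identities then hold on the nose.

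You should also say why each $V_i$ can be kept in $\cI$ along such an isotopy, since $t$ is only defined on tuples of objects in $\cI$. The paper is equally brief on this point.
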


Observe that $1$-semitraces are precisely cyclic traces. If $\cC$ is finite and $\cI$ is the projective ideal of $\cC$, we may express the admissible skein modules of the solid torus in terms of the minimal projective generator $G$. In particular, as a vector space, $\SM(S^1\times D^2)\cong \End(G)^{\ab}$, the abelianization of the projective Auslander algebra (see \cite{jubeir2025reconstructionfinitetensorcategories}). However, they do not agree as algebras unless $\cC$ is semisimple, as the admissible skein algebra is non-unital. The dual may be written as
$$\SM(S^1\times D^2)^*\cong\{\text{cyclic traces on }G\}.$$
\begin{remark}
Note: we give a full proof and do not defer to the arguments in the proof of Theorem \ref{limit-skein-module-iso}. This proof illustrates the utility of considering the duals of the admissible skein modules, rather than the modules themselves. When studying the lens spaces, we once again consider the duals. A statement in terms of limits is still possible, but obtaining the simplest form of limit as we have in Theorem \ref{limit-skein-module-iso} once again is cumbersome. While one can also express $g$-semitraces and $L(p,q)$-modified traces (see Subsection \ref{subsec:admissible-lens}) in terms of square-free objects, indexing over all objects is cleaner.
\end{remark}
\begin{proof}
First, note that at the cost of replacing crossings with coupons colored by the braiding, as in \cite[Corollary 3.2]{admissible} and the definition of the Reshetikhin-Turaev functor (Subsection \ref{subsection:icolouredgraphs}), it suffices to calculate the admissible skein module of the $g$-punctured disk $D^2_g$. Define a set of cutting segments $r_i$ to be a clockwise-oriented set of non-intersecting segments from the $i$th puncture to the boundary that intersect $\Gamma$ transversely, that does not intersect any coupons of $\Gamma$, and intersect with at least one $\cI$-colored edge of $\Gamma$. For each segment $r_i$, we may combine all colored strands intersecting that segment into a single strand colored by a single element in $\{P_i\}_{i=1}^g\subset \cI$. Since the $g$-punctured disk excluding these segments is contractible and homeomorphic to a disk, the skein contained in $D^2_g\setminus \bigcup r_i$ is equivalent in $\mcC$ to a map in $F_\Gamma \in \End(P_1\otimes \cdots \otimes P_g)$. 

Given a $g$-semitrace $t$, we define the element $T$ of $\SM(H_g)^*$ by $T(\Gamma)=t(F_\Gamma)$.
To prove that this does not depend on our choice of segment, suppose that two choices of the $i$th segment differ. Since fixing the remaining segments yields a once-punctured disk as in \cite{admissible}, then by cutting we obtain that the two are related by a cyclic relation on the $i$th component, which yield the same result since $t$ is $g$-semitracial. Since any box representing a skein relation between $\Gamma$ and $\Gamma'$ can be assumed to not intersect any segment, the two morphisms generated by $F_\Gamma$ and $F_{\Gamma'}$ are equal in $\mcC$.

Given an element $T: \SM(H_g)\to \Bbbk$, define the $g$-semitrace $t$ to be the evaluation of $\hat{\Gamma}_F$ given by the map $F$ connected by strands through each handle once. Since $T$ must be preserved under skein relations, the two skeins generated by $((\id_{V_1}\otimes \dots\otimes f\otimes \dots\otimes \id_{V_g})\circ F)$ and $ (F\circ(\id_{V_1}\otimes \dots\otimes f\otimes \dots\otimes \id_{V_g})) $ are equivalent.
\end{proof}
\begin{remark}  
    Proposition \ref{prop:semitrace-correspondence} is a corollary of Theorem \ref{limit-skein-module-iso} in the case that the tensor ideal $\cI$ is closed under direct sums, such as for the projective ideal. We provide below an alternate proof in this case.

    It suffices to show that there is a linear isomorphism between $g$-semitraces and linear functionals on the limit $S := \lim_{R} \End(P^{\otimes g}) / Q_P$, where  $Q_P = [\End(P^{\otimes g}), \End(P)^{\otimes g}].$ Given a $g$-semitrace $t$, we may define a linear functional $T$ using the universal property of limits, noting that the $g$-semitracial condition corresponds with quotienting by $Q_{P}$. 
    
    For a linear functional $T$ on $S$, we define a $g$-semitrace $t$ as follows: For $P_1,\dots, P_g$, choose $P\in R$ which contains every $P_i$ as a direct summand with projection $\pi_i:P\to P_i$ and inclusion $\iota_i:P_i\to P$. Let $\pi = \pi_1\otimes \dots \otimes \pi_g$ and $\iota = \iota_1\otimes\dots\otimes \iota_g$. As a colimit, $S$ comes with a map $C_{P}:\End(P^{\otimes g}) / Q_{P}\to S$. Define $t_{P_1,\dots, P_g}$ to be the following composition:
    $$\begin{tikzcd}
        \End(P_1\otimes\dots\otimes P_g)\arrow[r, "\iota\circ-\circ\pi"] & \End(P^{\otimes g})\arrow[r, "\mathrm{quot.}"] &\End(P^{\otimes g}) / Q_{P}\arrow[r, "c_{P}"] & S\arrow[r, "T"] & \bk
    \end{tikzcd}$$
    A priori, this depends on the choices of $\pi_i$ and $\iota_i$. However, by quotienting by $[\End(P^{\otimes g}), \End(P)^{\otimes g}]$, we are allowed the following manipulations for different choices of $\pi_i'$ and $\iota_i'$:
    $$T\circ p_{P}(\iota\circ F\circ\pi) = T\circ p_{P}(\iota\circ\pi'\circ\iota'\circ F\circ \pi'\circ\iota'\circ\pi) = T\circ p_{P}(\iota'\circ\pi\circ\iota\circ\pi'\circ\iota'\circ F\circ\pi') = T\circ p_{P}(\iota'\circ F\circ\pi').$$
    By a similar argument, we see that $t$ is indeed a $g$-semitrace. Moreover, the two constructions are clearly inverses of each other. 
\end{remark}

\begin{theorem}\label{thm:2handle}
    Let $M$ be an oriented $3$-manifold with boundary $\partial M$ and let $M'$ be obtained by attaching a $2$-handle to $M$ along a simple closed curve $K$ in $\partial M$. Then $i_*: \SM(M)\to \SM(M')$ is an epimorphism.
\end{theorem}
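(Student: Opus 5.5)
The argument is the standard general-position argument for classical skein modules (Przytycki), adapted to keep track of admissibility. Write $M' = M\cup_{A} (D^2\times[-1,1])$, where $A\subset\partial M$ is an annular neighbourhood of $K$ identified with $\partial D^2\times[-1,1]$. The map $i_*$ is induced by the inclusion $M\hookrightarrow M'$. It is well defined because an $\cI$-admissible ribbon graph in $M$ stays admissible in $M'$: components of $M$ map into components of $M'$, so every component of $M'$ that meets the image still carries an $\cI$-colored edge. Also, $\cI$-skein relations supported in boxes of $M$ remain $\cI$-skein relations in $M'$. The only content is therefore surjectivity, and for that it suffices to show that every $\cI$-admissible ribbon graph $\Gamma\subset M'$ is isotopic in $M'$ to one lying in $M$. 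These graphs span $\SM(M')$ by definition; by the proposition of \cite{admissible}, one may even restrict to $\cI$-colored graphs.

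Key step: view the 2-handle $D^2\times[-1,1]$ as a regular neighbourhood of its cocore $\{0\}\times[-1,1]$, an arc $c$ properly embedded in $M'$ relative to the part of the boundary coming from the handle. Equivalently, $M'\setminus N(c)\cong M$ up to a collar, since removing a neighbourhood of the cocore undoes the handle attachment. A ribbon graph is a compact 1-dimensional complex thickened to ribbons and coupons, and $c$ is 1-dimensional in a 3-manifold. By general position we can therefore isotope $\Gamma$, including its coupons, to be disjoint from $c$, and then from a small neighbourhood $N(c)$. Next, radially push $D^2\times[-1,1]\setminus N(c)$ onto its attaching annulus $\partial D^2\times[-1,1]$, and then slightly into the interior of $M$ through a collar of $\partial M$. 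This is an ambient isotopy of $M'$ carrying $\Gamma$ to a ribbon graph $\Gamma''$ in (the interior of) $M$. Its framing, orientation and colouring are preserved, so $\Gamma''$ is again $\cI$-admissible and $i_*[\Gamma''] = [\Gamma]$.

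The subtle point I expect to need care with is admissibility and the component structure. If attaching the handle joins two components of $M$ (when $K$ is separating on a disconnected $\partial M$-component situation, e.g.\ $K$ on a boundary component shared), a component of $M'$ could contain an admissible graph that, once pushed into $M$, lies in several components of $M$. Some of these pieces may carry no $\cI$-colored edge. But the handle attaches along a curve in a single boundary component of $M$, hence inside a single component of $M$. So components of $M'$ correspond bijectively to components of $M$, and the admissibility condition transfers directly. I would state this explicitly, and also remark that the isotopy moves coupons and ribbons as a whole, so no $\cI$-skein relation is needed at all. Hence $i_*$ is an epimorphism.
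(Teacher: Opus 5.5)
Your proof is correct and follows essentially the paper's argument. The paper defers to the classical skein-module proof: isotope the graph off the cocore of the $2$-handle, then push it through the attaching annulus into $M$. Your check that components of $M$ and $M'$ correspond bijectively, so admissibility transfers, is the one extra point, and the paper leaves it implicit. One small precision: apply general position to the $1$-dimensional core of $\Gamma$ after shrinking its coupons and ribbons, since a $2$-dimensional coupon does not generically miss a $1$-dimensional arc in a $3$-manifold.
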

\begin{theorem}\label{thm:3handle}
Suppose $M'$ is obtained from $M$ by attaching a $3$-handle and let $i:M\to M'$ be the associated embedding, then $i_*$ is an isomorphism $\SM(M)\to \SM(M')$.
\end{theorem}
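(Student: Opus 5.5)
To prove Theorem~\ref{thm:3handle}, I would construct an explicit inverse to $i_*$ by pushing graphs off the $3$-handle. Write $M' = M\cup_{S} B$, where $B\cong D^3$ is the $3$-handle and $S\cong S^2\subset\partial M$ is the attaching sphere. The plan has two parts: surjectivity of $i_*$ by general position, and injectivity by showing that every $\cI$-skein relation and isotopy in $M'$ can be pushed into $M$.

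\emph{Surjectivity.} Let $\Gamma$ be an $\cI$-admissible ribbon graph in $M'$. Ribbon graphs are $1$-dimensional, so $\Gamma$ misses a point $x$ in the interior of $B$. The complement $B\setminus\{x\}$ deformation retracts radially onto a collar $S\times[0,1)$. Pushing radially away from $x$ gives an isotopy of $M'$, supported in $B$, that moves $\Gamma$ into $M$; for definiteness, into a collar of $S$ inside $M$. Admissibility is preserved, since each component still contains an $\cI$-colored edge. Hence $[\Gamma]$ lies in the image of $i_*$.

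\emph{Injectivity.} I would define $r:\SM(M')\to\SM(M)$ by $r([\Gamma]) = [\Gamma_x]$, where $\Gamma_x$ is the result of the radial push from a point $x\notin\Gamma$. Then $r\circ i_*=\id$ and $i_*\circ r=\id$ are immediate. Well-definedness requires three checks.
\begin{enumerate}[(a)]
\item Independence of $x$. Any two points of $B\setminus\Gamma$ are joined by a path avoiding $\Gamma$, because the complement of a graph in a $3$-ball is connected. Pushes from nearby points differ by an isotopy inside $M$, so the result depends only on the path component.
\item Compatibility with isotopies of $\Gamma$ in $M'$. Take a generic isotopy $\Gamma_s$. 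The trace $\bigcup_s \Gamma_s\times\{s\}\subset M'\times[0,1]$ is $2$-dimensional, so a generic path $x_s$ in $B$ avoids it. Conjugating by the family of radial pushes from $x_s$ then gives an isotopy in $M$ between $(\Gamma_0)_{x_0}$ and $(\Gamma_1)_{x_1}$.
\item Compatibility with $\cI$-skein relations. Such a relation is supported in a cube $[0,1]^3\subset M'$, and it can be shrunk by isotopy to a small cube. Choosing $x$ outside this small cube and outside both graphs, the radial push carries the cube diffeomorphically into $M$. The relation therefore becomes an $\cI$-skein relation in $M$, because $\Pen_3$ is unchanged under diffeomorphisms of the cube.
\end{enumerate}

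The main obstacle is step (b). One has to ensure that the pushes vary continuously with $x_s$, so that the conjugated family really is an isotopy, and that the orientation-preserving diffeomorphisms involved act correctly on the framing. To handle this, I would fix a smooth family of diffeomorphisms $\phi_x: M'\setminus \{x\}\to M'\setminus \overline{B'}$, for $x$ in the interior of $B$ and $B'$ a small ball around $x$, that restrict to the identity outside a neighbourhood of $B$. I would then identify $M'\setminus\overline{B'}$ with $M$ via a collar. This makes all three checks formal.
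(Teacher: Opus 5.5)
Your proposal is correct and is essentially the argument the paper relies on. The paper gives no details and simply defers to the classical general-position proof for skein modules: graphs and their isotopies can be pushed off a point of the $3$-handle, and $M'$ minus a small ball is $M$. Your checks (a)--(c) spell this out, using the fact that $\cI$-skein relations are supported in cubes and so carry over under the push.

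Two minor points. Run the general-position argument on the cores of the ribbon graphs, since the ribbons themselves are $2$-dimensional. In (b), a single fixed point $x$ already suffices, because the sweep of the cores is $2$-dimensional in $M'$.
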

\begin{proof}
    The proofs of Theorems \ref{thm:2handle} and \ref{thm:3handle} follow identically to the classical skein module case; see \cite{fundamentals}.
\end{proof}
\subsection{Computation for lens spaces}\label{subsec:admissible-lens}
Let $p\geq q$ be coprime non-negative integers. Given a cyclic trace $t$ on an ideal $\cI$ of a ribbon tensor category $\cC$, we say that $t$ is \textit{$L(p,q)$-modified} if it satisfies the following two equivalent conditions for any $P\in\cI$ and $X\in\cC$:
\begin{enumerate}
    \item for any $f:P\otimes X\to P\otimes X$, the right partial trace satisfies the following:
    \begin{align*}
        t_P\left(\vcenter{\hbox{\begin{overpic}[scale = 0.04]{rightpartialtrace1}
\put(14,32){\tiny{$f$}}
\put(6,13){\tiny{$P$}}
\put(6,50){\tiny{$P$}}
\put(41,32){\tiny{$X$}}
\end{overpic} }}\right) &= t_{P\otimes X^{\otimes p}}\left(\vcenter{\hbox{\begin{overpic}[abs, unit=1mm, scale = .25]{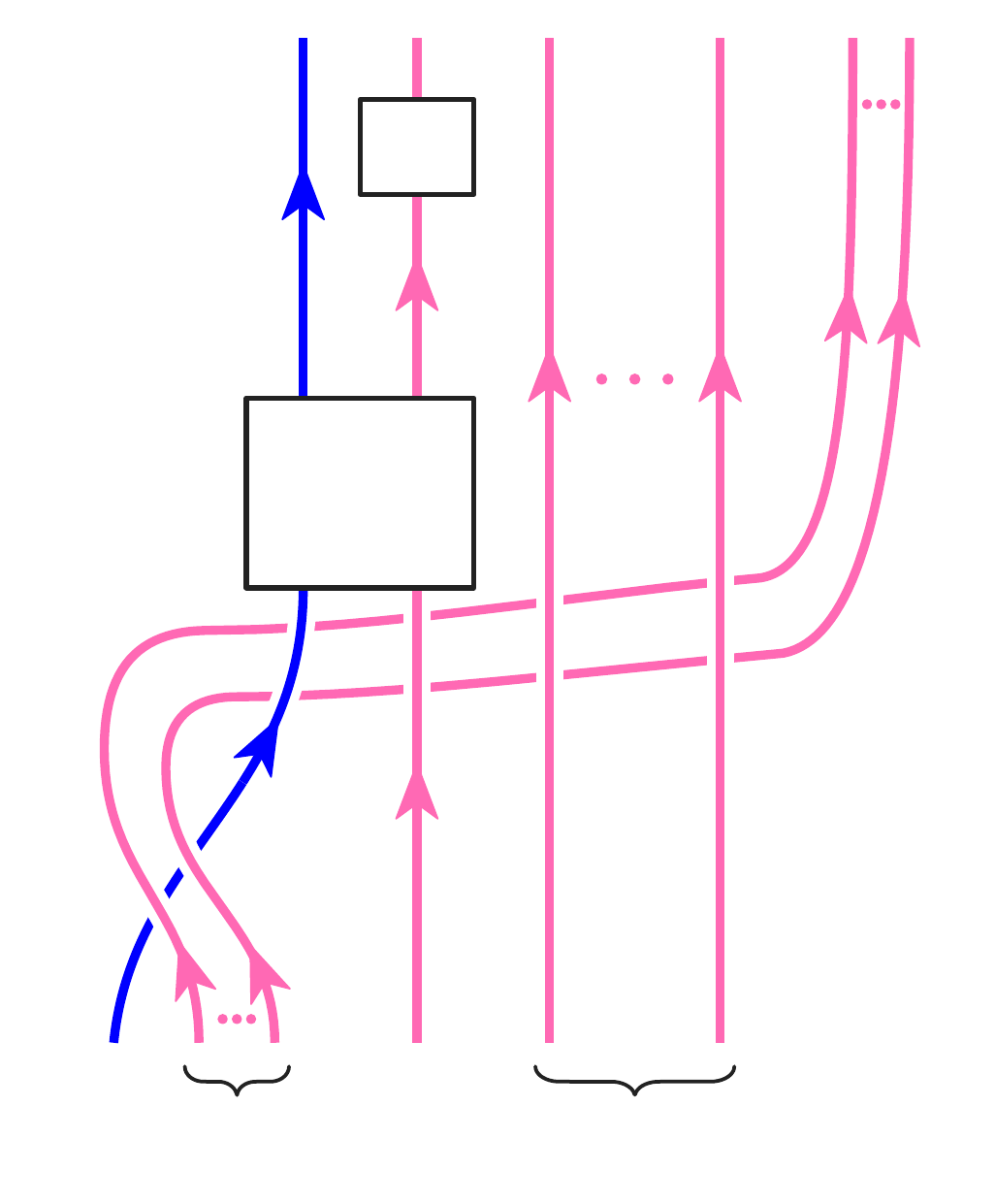}
    \put(15.7,31.0){\makebox(0,0){$f$}}
    \put(18.2,46.1){\makebox(0,0){\scriptsize{$\theta_X^{q}$}}}
    \put(2.6,10.5){\makebox(0,0){\footnotesize{$P$}}}
    \put(41.8,37.6){\makebox(0,0){\footnotesize{$X$}}}
    \put(10.3,2.6){\makebox(0,0){\footnotesize{$q$}}}
    \put(27.7,2.6){\makebox(0,0){\footnotesize{$p-q-1$}}}
  \end{overpic}}}\right),
    \end{align*}
    \item for any $f:X\otimes P\to X\otimes P$, the left partial trace satisfies the following:
    \begin{align*}
        t_P\left(\vcenter{\hbox{\reflectbox{\begin{overpic}[scale = 0.04]{rightpartialtrace1}
\put(14,32){\tiny{\reflectbox{$f$}}}
\put(6,13){\tiny{\reflectbox{$P$}}}
\put(6,50){\tiny{\reflectbox{$P$}}}
\put(41,32){\tiny{\reflectbox{$X$}}}
\end{overpic} }}}\right) &= t_{X^{\otimes p}\otimes P}\left(\vcenter{\hbox{\begin{overpic}[abs, unit=1mm, scale = .25]{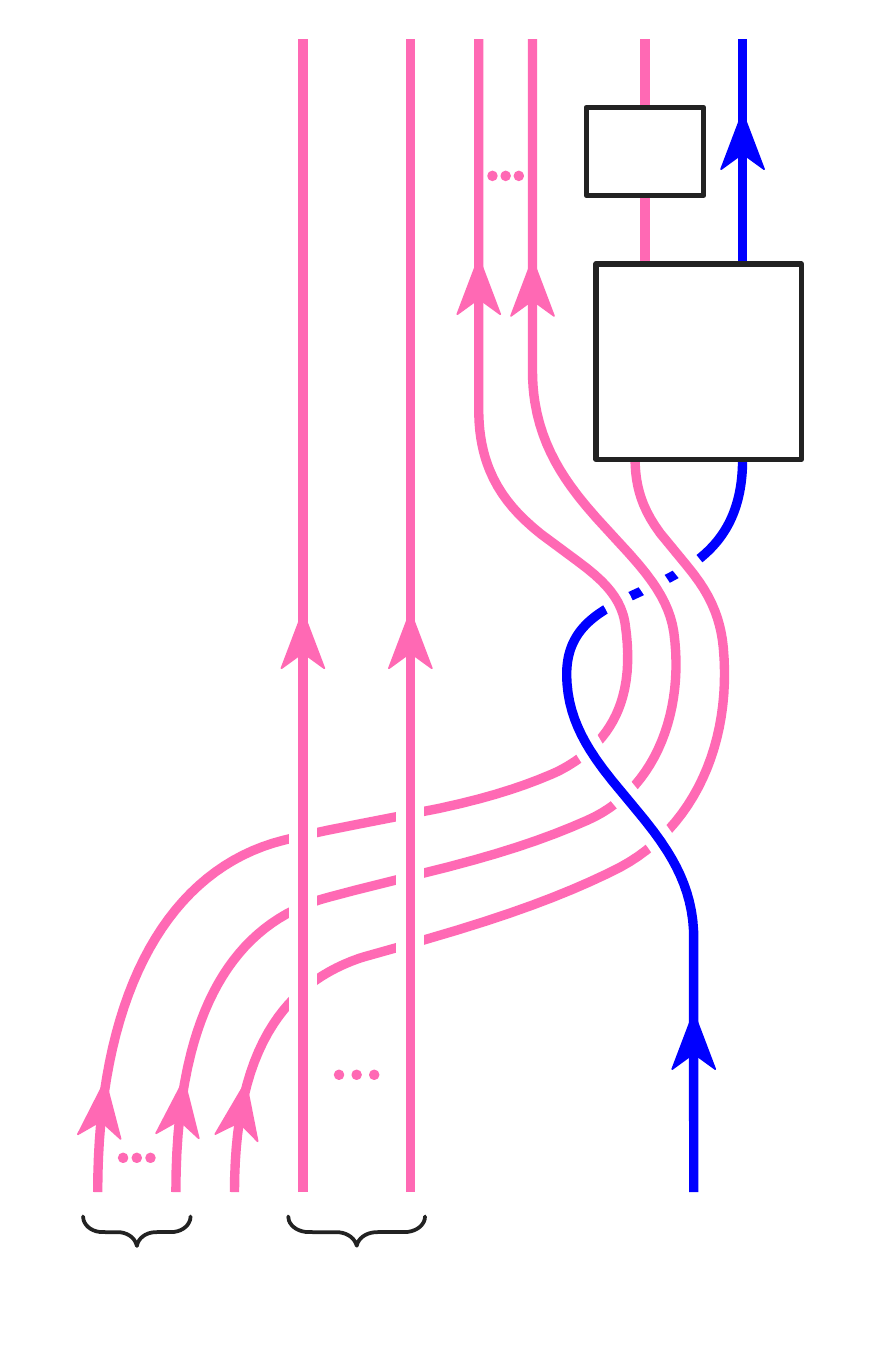}
    \put(29.6,42.2){\makebox(0,0){$f$}}
    \put(27.3,51.1){\makebox(0,0){\scriptsize{$\theta_X^{q}$}}}
    \put(31.6,13.5){\makebox(0,0){\footnotesize{$P$}}}
    \put(10.6,31.9){\makebox(0,0){\footnotesize{$X$}}}
    \put(5.8,2.6){\makebox(0,0){\footnotesize{$q-1$}}}
    \put(15.1,2.6){\makebox(0,0){\footnotesize{$p-q$}}}
  \end{overpic}}}\right).
    \end{align*}
    \end{enumerate}
    These conditions can be seen as equivalent using cyclicity and isotopy. Note that $L(1,0)$-modified traces are the usual modified traces because there is no $\theta$-term nor any braidings and a single unlinked strand which attaches to $f$. In particular, the following theorem is already known for $L(1, 0)\cong S^3$ by \cite{admissible}.
\begin{theorem}\label{thm:admissible-lens}
    Under the same assumptions as Theorem \ref{limit-skein-module-iso}, the admissible skein module of the lens space $L(p, q)$ is given by
    $$\SM(L(p, q))^*\cong \{L(p,q)\text{-modified traces}\}.$$
\end{theorem}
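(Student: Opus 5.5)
We build $L(p,q)$ from the solid torus $H_1$ by attaching a $2$-handle along a $(p,q)$-curve $K\subset\partial H_1$ and then a $3$-handle. By Theorem~\ref{thm:3handle}, $\SM(L(p,q))\cong\SM(M')$, where $M'=H_1\cup_K(\text{2-handle})$. By Theorem~\ref{thm:2handle}, $i_*:\SM(H_1)\to\SM(M')$ is surjective. Dualizing, $\SM(L(p,q))^*$ is the subspace of $\SM(H_1)^*$ consisting of functionals that vanish on $\ker i_*$. Proposition~\ref{prop:semitrace-correspondence} with $g=1$ identifies $\SM(H_1)^*$ with cyclic traces on $\cI$. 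So the theorem reduces to two claims. First, $\ker i_*$ is spanned by the ``handle-slide'' differences. Second, a cyclic trace kills these differences exactly when it is $L(p,q)$-modified.

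For the first claim, we follow the classical argument (as in \cite{fundamentals}). Two admissible graphs in $H_1$ that are isotopic in $M'$ differ by isotopies in $H_1$ together with moves that push a strand across the co-core disk of the $2$-handle. Such a move replaces a local arc, colored by some $X\in\cC$, by that arc band-summed with a parallel copy of $K$ on $\partial H_1$. Skein relations in $M'$ can be taken supported in balls, and such balls can be isotoped off the $2$-handle. Hence $\ker i_*$ is spanned by differences $\Gamma-\Gamma_K$, where $\Gamma_K$ is obtained by sliding one strand of $\Gamma$ over $K$. Using the $\cI$-skein relations, we may gather everything else into a single coupon. We may then assume $\Gamma$ is the closure around the core of a morphism $f\in\End(P\otimes X)$ with $P\in\cI$, where the $X$-strand is the one being slid. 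Here the $X$-strand is first capped off locally, in a ball, so that it contributes a right partial trace. The slid version of that $X$-strand then runs parallel to $K$: it winds $p$ times around the longitude direction and $q$ times around the meridian direction.

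For the second claim, we project $\Gamma_K$ to the once-punctured disk, as in the proof of Proposition~\ref{prop:semitrace-correspondence}. We then read off the resulting element of $\End(P\otimes X^{\otimes p})$, which the cyclic trace $t$ evaluates. The $p$ passes of the $X$-strand through the cutting segment give the factor $X^{\otimes p}$. The meridional winding produces the cyclic shift pattern in the diagram of condition (1): $q$ strands pass one way and $p-q-1$ the other, joined by braidings. Framing bookkeeping yields the twist $\theta_X^{q}$. Comparing with $\Gamma$, whose value is $t_P(\mathrm{ptr}_r(f))$, the vanishing of $t$ on $\Gamma-\Gamma_K$ for all $f,P,X$ is exactly condition (1). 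Conversely, every generator of $\ker i_*$ has this form, so condition (1) suffices. The second form of the condition corresponds to sliding from the other side, and its equivalence with the first is noted in the paper.

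The main obstacle is this diagrammatic identification. We must check carefully that the band-sum with the $(p,q)$-curve, once isotoped into standard position relative to the cut, gives precisely the pictured braid with $\theta_X^q$, with the correct split into $q$ and $p-q-1$ strands and the correct framing. We must also justify that general slides, with several strands, coupons and both orientations of the band, reduce to this single normal form using cyclicity of $t$. We would first treat $L(p,1)$, where the curve wraps once meridionally, and then induct on the continued-fraction structure, or track the $q$ meridional crossings directly.
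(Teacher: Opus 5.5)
Your proposal is correct and follows essentially the same route as the paper. Both arguments surject $\SM(H_1)\to\SM(L(p,q))$ via the 2- and 3-handle results, identify $\SM(H_1)^*$ with cyclic traces, and show that the only new relations are single-strand slides over the $(p,q)$-curve, whose effect on $F_\Gamma$ (Figure~\ref{fig:handleslide}) is exactly the $L(p,q)$-modified condition. The paper writes out the reduction to slides directly, by subdividing isotopies through $B$ along the co-core arc $r$ (note that the co-core is an arc, not a disk), and reads the $q$/$(p-q-1)$ braiding and $\theta_X^{q}$ straight off the picture. So no continued-fraction induction is needed.
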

\begin{proof}
    Recall that there is a surjective map $\SM(M)\to \SM(N)$ when $N$ is obtained from $M$ by gluing a 2-handle $D^2\times D^1$ because any ribbon graph on the handle $D^2\times D^1$ may be isotoped to its boundary annulus $S^1\times D^1$. Moreover, $\SM(M)\cong \SM(N)$ when $N$ is obtained from $M$ by gluing a 3-handle $D^3\times D^0$. The lens space $L(p,q)$ decomposes as a gluing
    $$L(p,q) = T\cup (D^2\times D^1)\cup (D^3\times D^0),$$
    where $T$ is a solid torus. Therefore, there is a surjection $p:\SM(T)\proj \SM(L(p,q))$. In particular, there is also an inclusion $p^*:\SM(L(p,q))^*\to\SM(S^1\times D^2)^*$. Label the solid torus $T$ and the 2-handle $B$ so that $\SM(L(p,q))\cong \SM(T\cup B)$. Let $A$ denote the boundary annulus of $B$ which is glued onto $T$.
    
    A cyclic trace $t$ on $\cI$ is in $\SM(L(p,q))^*$ if $t(F_\Gamma) = t(F_{\Gamma'})$ whenever $\Gamma, \Gamma'$ are admissible ribbon graphs inside $T$ which are equivalent in $L(p,q)\supset S^1\times D^2$ by a sequence of skein relations and isotopies. By a handle slide move, we can see that $t$ must be an $L(p,q)$-modified trace (the effect of this handle-slide is visualized in Figure \ref{fig:handleslide}). The rest of this proof is to argue that handle slide moves on an admissible ribbon graph are entirely captured by this condition together with preservation under cyclic permutations.

        \begin{figure}[ht]
  \centering
  \begin{subfigure}{\textwidth}
    \centering
    $\vcenter{\hbox{\begin{overpic}[abs, unit=1mm, scale = .16]{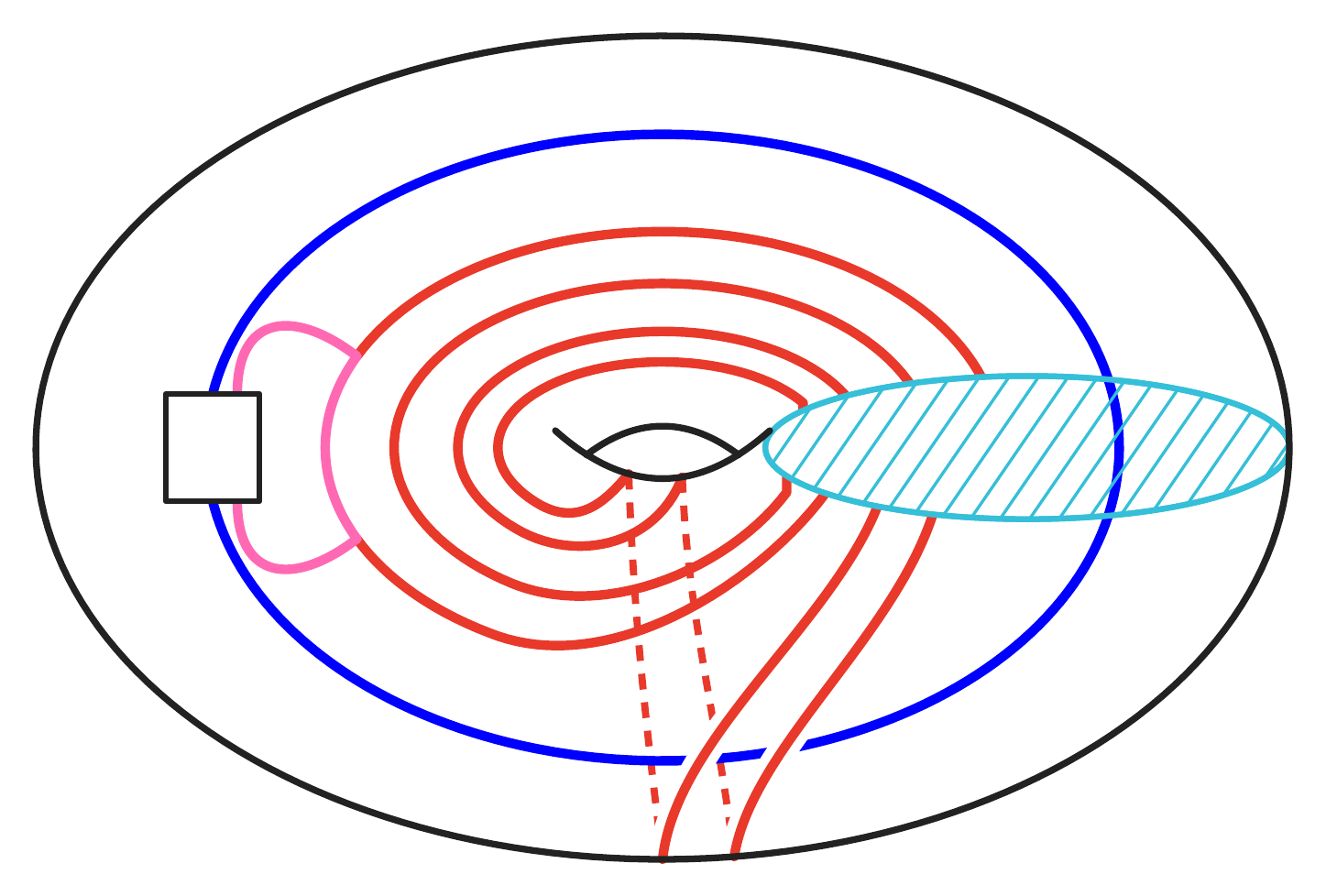}
      \put(6.3,13.2){\makebox(0,0){\footnotesize{$f$}}}
    \end{overpic} }}
    \;\vcenter{\hbox{\begin{overpic}[height=2.6cm]{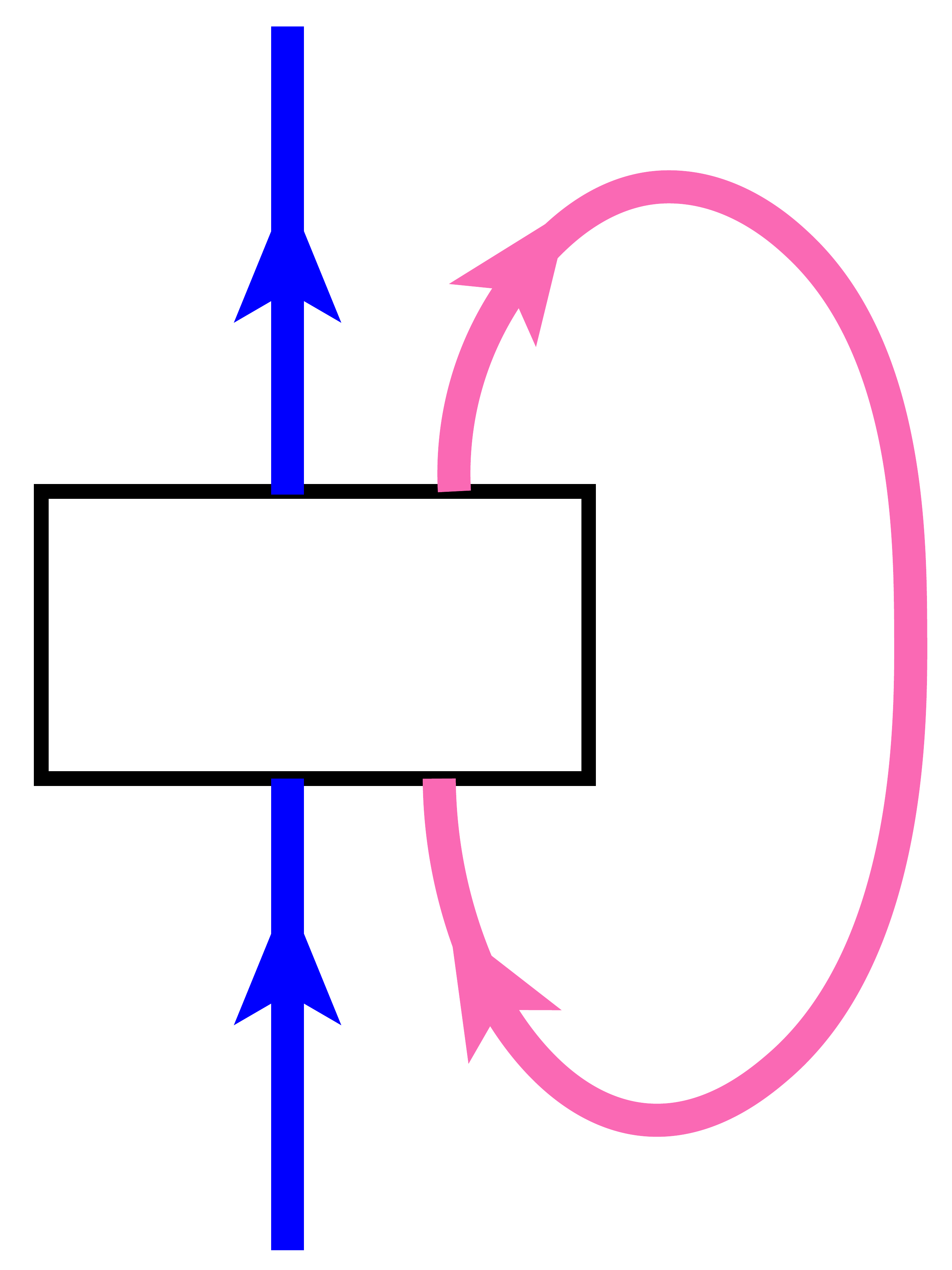}
      \put(11,65){\makebox(0,0){\footnotesize{$P$}}}
      \put(11,10){\makebox(0,0){\footnotesize{$P$}}}
      \put(17,36){\makebox(0,0){\footnotesize{$f$}}}
      \put(60,42){\makebox(0,0){\footnotesize{$X$}}}
    \end{overpic} }}$
    \caption{A skein $\Gamma$ (left) and its corresponding endomorphism $F_\Gamma$ (right) before a handle-slide.}
    \label{fig:combined-a}
  \end{subfigure}
 
  \bigskip
 
  \begin{subfigure}{\textwidth}
    \centering
    $
    \vcenter{\hbox{\begin{overpic}[abs, unit=1mm, scale = .16]{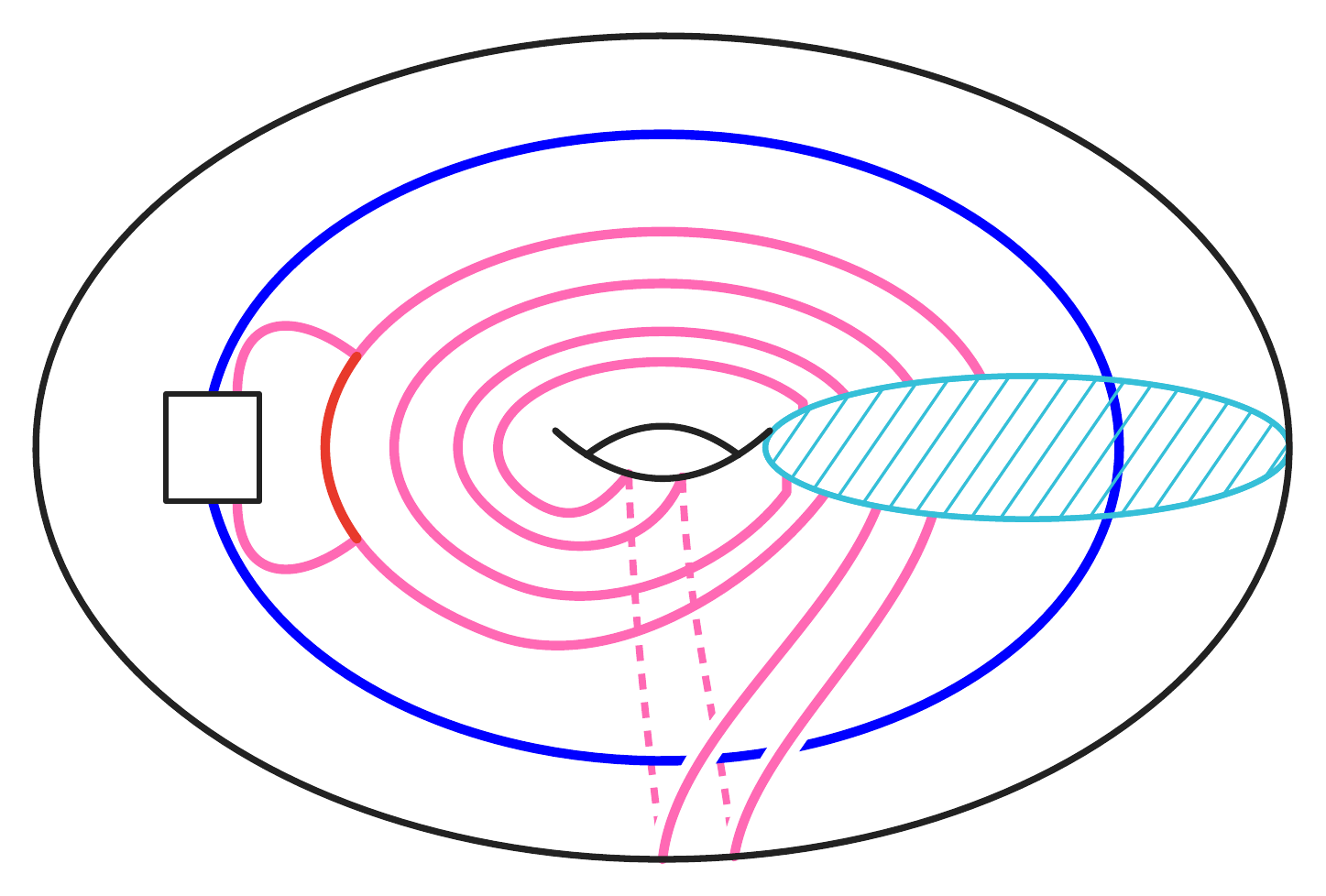}
      \put(6.3,13.2){\makebox(0,0){\footnotesize{$f$}}}
    \end{overpic} }}
    \;\vcenter{\hbox{\begin{overpic}[abs, unit=1mm, scale = .25]{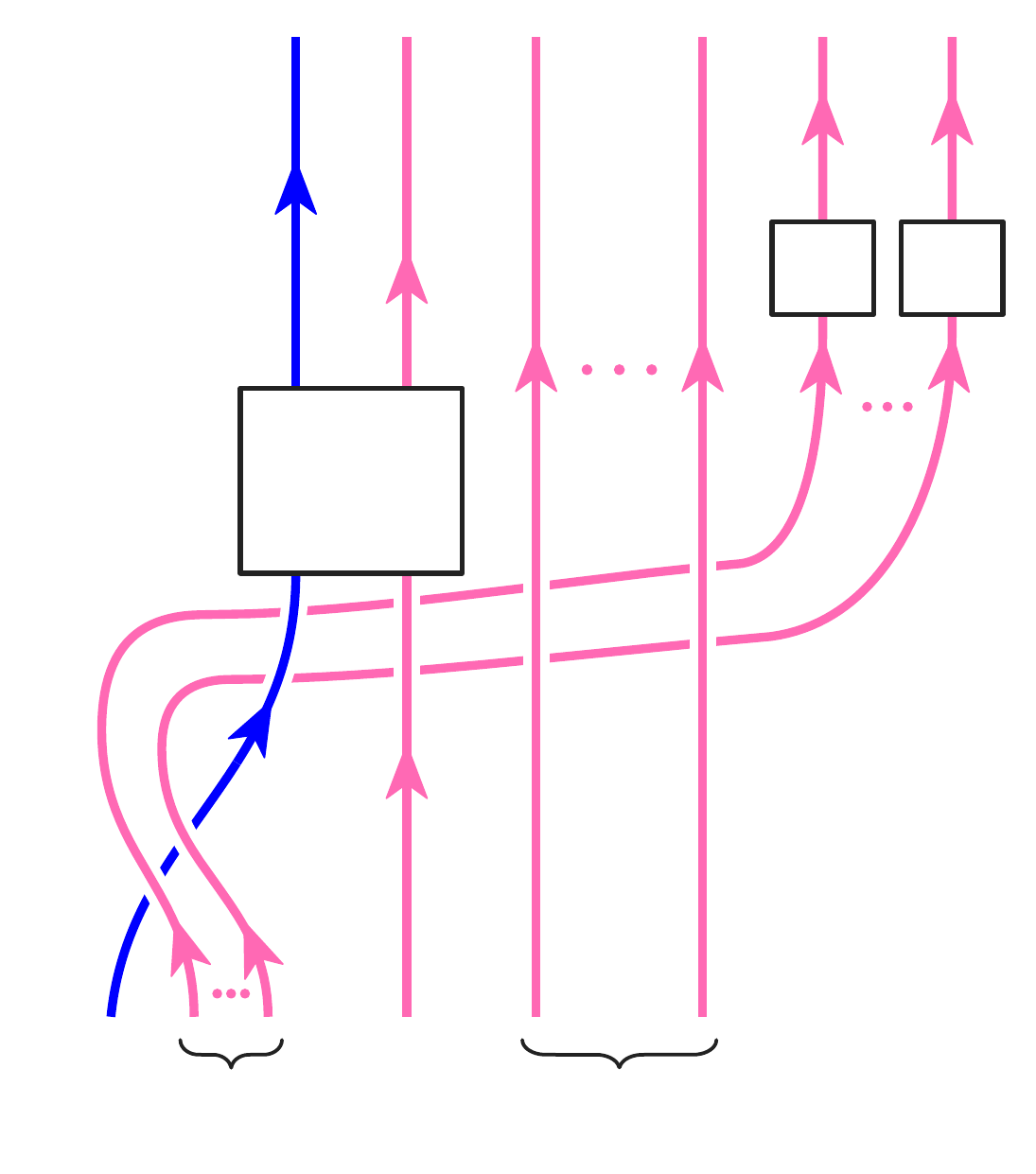}
      \put(15.7,31.0){\makebox(0,0){$f$}}
      \put(36.8,40.5){\makebox(0,0){\scriptsize{$\theta_X$}}}
      \put(42.6,40.5){\makebox(0,0){\scriptsize{$\theta_X$}}}
      \put(2.6,10.5){\makebox(0,0){\footnotesize{$P$}}}
      \put(33.6,13.0){\makebox(0,0){\footnotesize{$X$}}}
      \put(10.3,2.6){\makebox(0,0){\footnotesize{$q$}}}
      \put(27.7,2.6){\makebox(0,0){\footnotesize{$p-q-1$}}}
    \end{overpic} }}
    $
    \caption{A skein $\Gamma'$ (left) and its corresponding endomorphism $F_{\Gamma'}$ after a handle-slide.}
    \label{fig:combined-b}
  \end{subfigure}
  \caption{}
        \label{fig:handleslide}
\end{figure}

    Suppose $\Gamma, \Gamma'$ are admissible ribbon graphs inside $T$ which are equivalent in $L(p,q)\supset S^1\times D^2$ by a sequence of skein relations and isotopies. Denote this sequence $\Gamma=\Gamma_0,\Gamma_1,\dots,\Gamma_n=\Gamma'$. We may assume without loss of generality that each $\Gamma_k$ lies completely within $\text{int}(T)$ and that each $\Gamma_k$ does not have a coupon intersecting the cut made in Theorem \ref{limit-skein-module-iso}. Under these assumptions, we must show that $t(F_{\Gamma_k}) = t(F_{\Gamma_{k+1}})$ for all $k$. That is, $t$ is independent of isotopy and skein relations on the $\Gamma_k$.
    
    If the move $\Gamma_k$ to $\Gamma_{k+1}$ stays completely within $T$, then $t(F_{\Gamma_k})=t(F_{\Gamma_{k+1}})$ by the argument in Proposition \ref{prop:semitrace-correspondence}. We may assume that any skein relation happens completely within $T$, so we need only consider what happens under isotopies which pass through $B$.
    
    Suppose now that the move $\Gamma_k$ to $\Gamma_{k+1}$ is an isotopy which passes through $B$. Assuming general position, there is a segment $r$ from the inner shell to the outer shell of $B$ which does not intersect $A$ and, during isotopies between the $\Gamma_k$, intersects no coupons and there is at most one point of intersection with a ribbon at a time. Subdivide the isotopy from $\Gamma_k$ to $\Gamma_{k+1}$ so that there is at most one point of intersection with a ribbon and $r$ during the isotopy. We may assume without loss of generality that during an intersection of a ribbon with $r$, no other component moves and we may subdivide the isotopy further so that each part of the isotopy either does not intersect $r$ or has a single ribbon moving across $r$ and the rest of the ribbon graph remains fixed. Let $\Gamma_k=\hat\Gamma_{0},\dots,\hat\Gamma_{m}=\Gamma_{k+1}$ be such a subdivision. Without loss of generality, there is a small closed tubular neighborhood $N(r)$ of $r$ which intersects no $\hat\Gamma_k$ and also only intersects one ribbon at a time during isotopies. 
    
    There is an isotopy $J$ of $\id_{T\cup B}$ to an auto-homeomorphism of $T\cup B$ for which the image of $N(r)$ is $B$. Thus, we may assume without loss of generality that $N(r) = B$. Suppose the isotopy $\hat\Gamma_\ell$ to $\hat\Gamma_{\ell+1}$ does not intersect $r$, then in particular, it does not intersect $N(r)=B$ and hence is completely contained in the solid torus, so we are reduced to the case handled by Proposition \ref{prop:semitrace-correspondence}.

    Now, suppose the isotopy $\hat\Gamma_\ell$ to $\hat\Gamma_{\ell+1}$ is of a single ribbon whose central circle/line segment $s$ intersects $r$ in exactly one place at a time. We may assume that $B\cap s$ is always connected. We further assume the intersection between $s$ and $A$ is transverse and nonempty at all times $t$ except $t=0,1$ (otherwise, we may shrink $N(r)$ so that $s\not\subset N(r)$ and then shrink the time interval). The disk slice of the solid torus made during Proposition \ref{prop:semitrace-correspondence} extends to $p$ wedges $W_1,\dots, W_p$ on the 2-handle as illustrated in Figure \ref{fig:slice}. Without loss of generality, there are two fixed endpoints $x,y\in s\cap A$ of $s$ (at times $t=0,1$, we allow a non-transverse intersection where $s$ lies flat against $A$). At time $t=0$, we may assume $s\cap A\setminus \bigcup W_i$ is connected (otherwise, we can drag the ribbon along the annulus so that this is true). Moreover, we may assume the ribbon remains fixed outside of $B$ throughout the isotopy. 
    \begin{figure}[ht]
  \centering
  $\vcenter{\hbox{\begin{overpic}[abs, unit=1mm, scale = .2]{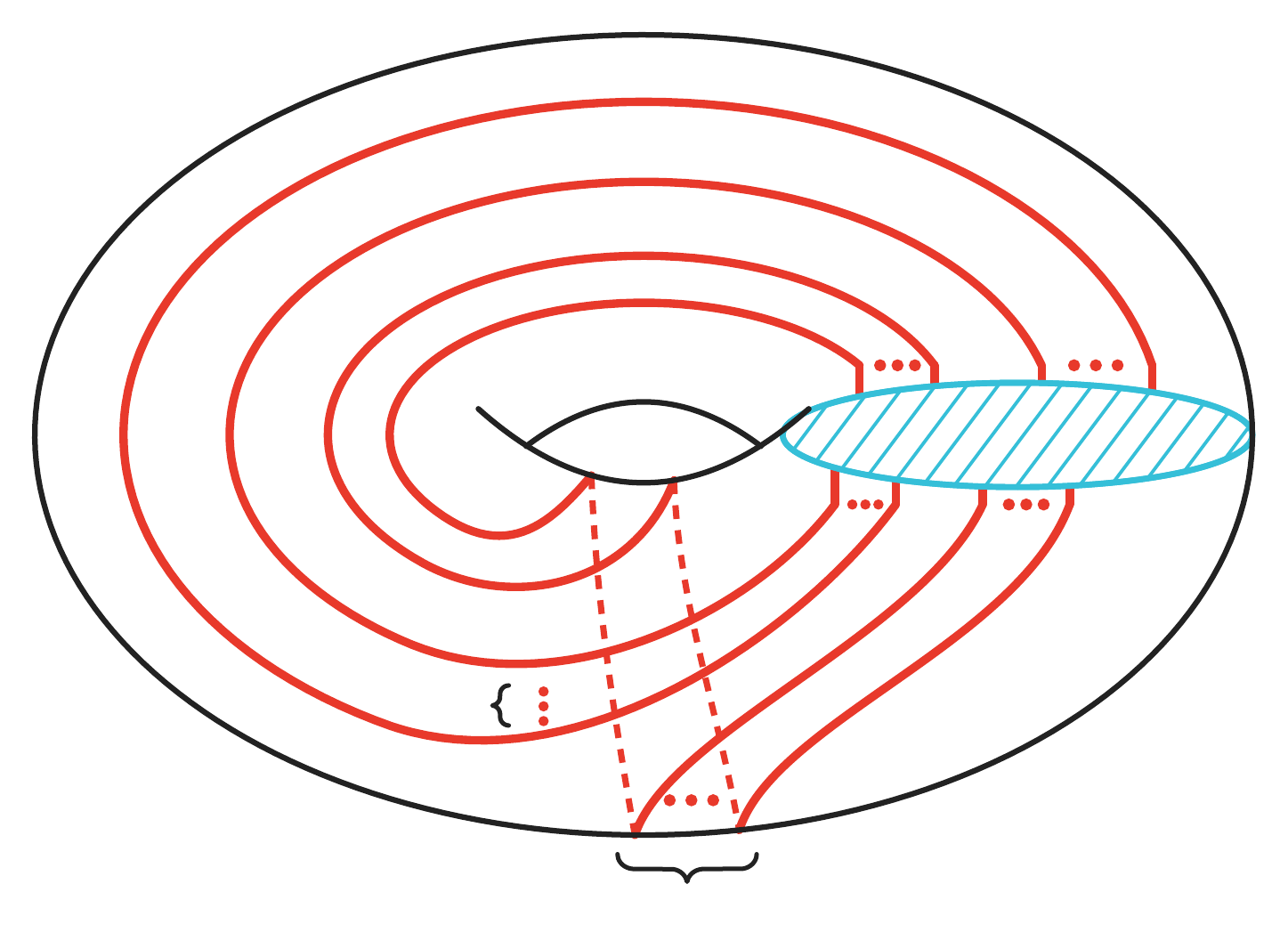}
    \put(3.5,32.5){\makebox(0,0){$T$}}
    \put(15.5,9.85){\makebox(0,0){\tiny{$p-q$}}}
    \put(26.1,1.2){\makebox(0,0){\scriptsize{$q$}}}
  \end{overpic} }}
  \qquad
  \vcenter{\hbox{\begin{overpic}[abs, unit=1mm, scale = .2]{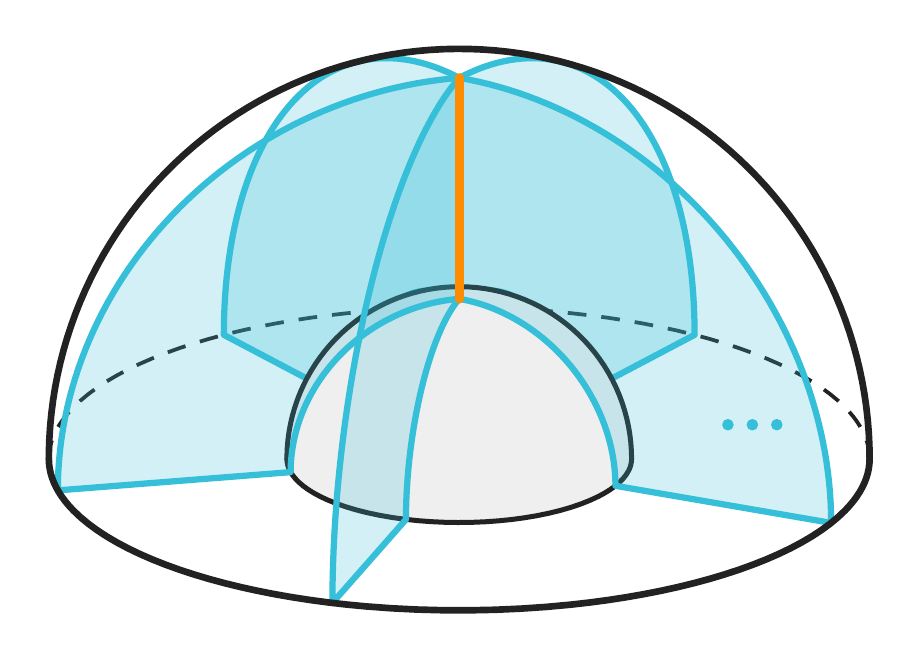}
    \put(2.5,20.0){\makebox(0,0){$B$}}
  \end{overpic} }}$
  \caption{The slice of the solid torus $T$ used to define $F_\Gamma$ extends to $p$ slices of the
    2-handle $B$, as the slice passes through the $(p,q)$-curve $p$ times.}
    \label{fig:slice}
\end{figure}
    
    At time $t=0$ and $t=1$, $s\cap A$ defines an element $[\gamma_1], [\gamma_2]\in\pi_1(A, x\to y)$ of the fundamental groupoid, which are represented by curves $\gamma_1, \gamma_2$ that contain no self-intersections. There are precisely two such elements. If $[\gamma_1]=[\gamma_2]$, then there is in fact an isotopy between $\gamma_1$ and $\gamma_2$ contained on $A\subseteq T$. Such an isotopy may be made completely lying on the solid torus, so performing this isotopy on $\hat\Gamma_{\ell+1}$ preserves $t$ and thus $t(\hat\Gamma_{\ell+1}) = t(\hat\Gamma_{\ell})$. 
    
    Finally, if $[\gamma_1]\neq[\gamma_2]$, then there is an isotopy of $\gamma_2$ contained on $A\subseteq T$ to a curve which passes through each $W_i$ exactly once. Again such an isotopy preserves $t$, so we may assume without loss of generality that $\gamma_2$ is this curve. That is, $\hat\Gamma_{\ell+1}$ and $\hat\Gamma_{\ell}$ differ by the obvious handle slide of a single ribbon. In particular, $\hat\Gamma_\ell$ and $\hat\Gamma_{\ell+1}$ can be assumed to be of the forms seen in Figure \ref{fig:handleslide} (where the blue ribbon may represent many strands and the box $f$ may represent a more complex ribbon graph). In particular, the difference between $F_{\hat\Gamma_\ell}$ and $F_{\hat\Gamma_{\ell+1}}$ aligns with the skein move preserved by the $L(p,q)$-modified trace. This handles all possible isotopies, so the result follows.
\end{proof}

One may also express $\SM(L(p,q))$ as a quotient of the colimit of $\End(X)^{\ab}$ as in Theorem \ref{limit-skein-module-iso} or the more general formula afterwards, quotienting by the skein relation described in Figure \ref{fig:handleslide}. 

\begin{remark}
     For the $3$-manifold $S^1\times S^2\cong L(0,1)$, a similar argument holds. One can show $\SM(S^1\times S^2)^*$ corresponds with cyclic traces satisfying the following condition on right partial traces: 
     $$t_P\left(\vcenter{\hbox{\begin{overpic}[scale = 0.04]{rightpartialtrace1}
\put(14,32){\tiny{$f$}}
\put(6,13){\tiny{$P$}}
\put(6,50){\tiny{$P$}}
\put(41,32){\tiny{$X$}}
\end{overpic} }}\right) = t_P\left(\vcenter{\hbox{\begin{overpic}[abs, unit=1mm, scale = .25]{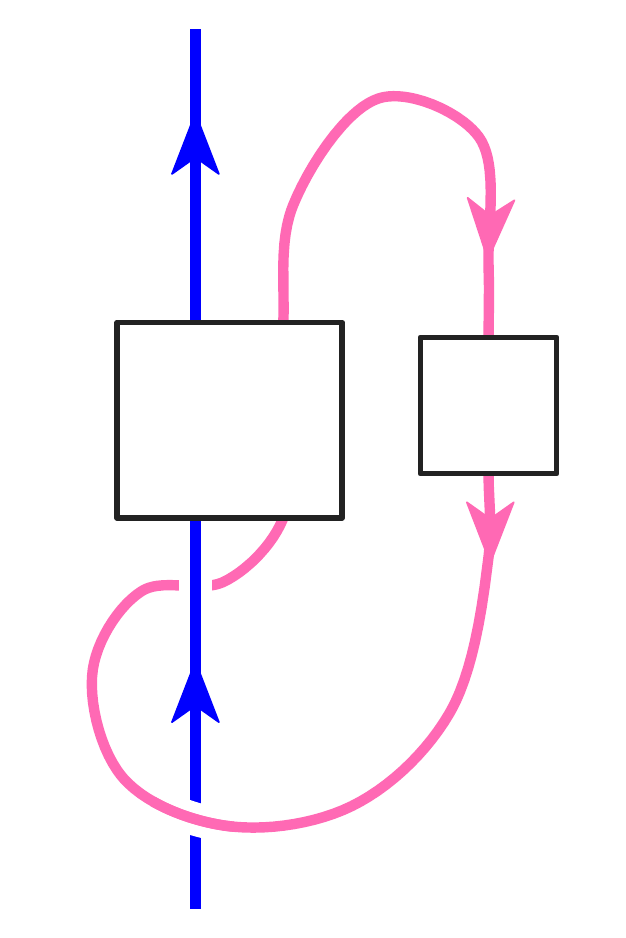}
    \put(9.7,21.9){\makebox(0,0){$f$}}
    \put(20.7,22.5){\makebox(0,0){\scriptsize{$\theta_{X^*}$}}}
    \put(6.0,33.5){\makebox(0,0){\footnotesize{$P$}}}
    \put(6.0,10.3){\makebox(0,0){\footnotesize{$P$}}}
    \put(22.5,31.4){\makebox(0,0){\footnotesize{$X$}}}
    \put(22.3,15.3){\makebox(0,0){\footnotesize{$X$}}}
  \end{overpic}}}\right).$$
\end{remark}

\section {Future Directions}\label{section:future}
By our computations, a priori, the invariants of $L(p,q)$ depend on both $p$ and $q$. However, there is reason to believe that this is not generally true. The Kauffman bracket skein module cannot distinguish the lens spaces $L(p,q)$ and $L(p, q')$. Theorem \ref{thm:K2-calculation} shows that the chromatic spherical invariant also does not necessarily distinguish the lens spaces. Thus, a natural question is if there is a general theory of which lens spaces these invariants do not distinguish. If $\cI$ is finite, $\SM(L(p,q))$ is a quotient of $\End(G)^{\ab}$ by Theorem \ref{limit-skein-module-iso}, where $G$ is a minimal generator of $\cI$. Because $\End(G)$ is finite-dimensional, the admissible skein module cannot distinguish general lens spaces in this case. Even if $\cI$ is infinite, $\SM(L(p,q))$ may be finite-dimensional. This is again exhibited in the semisimple case with the Kauffman bracket skein module $\mcS_{\mathrm{KB}}(L(p,q))$, which has dimension $\lfloor p/2\rfloor + 1$. Under which conditions on $\cI$ is the admissible skein module $\SM(L(p,q))$ finite-dimensional? In the particular case of $\cC=\Rep(U_q(\sl_2))$ at a root of unity, can we find an explicit basis for $\cS_{\Proj(\cC)}(L(p,q))$?

\bibliographystyle{abbrv}
\bibliography{references}

\end{document}